\date{\today}
\newtheorem*{theorem*}{Theorem}
\newtheorem{theorem}{Theorem}[section]
\newtheorem{corollary}[theorem]{\bf{Corollary}}
\newtheorem{lemma}[theorem]{Lemma}
\newtheorem{proposition}[theorem]{Proposition}
\theoremstyle{definition}
\theoremstyle{remark}
\newtheorem{remark}[theorem]{\bf{Remark}}
\numberwithin{equation}{section}
\newcommand{\beas}{\begin{eqnarray*}}
\newcommand{\eeas}{\end{eqnarray*}}
\newcommand{\bes} {\begin{equation*}}
\newcommand{\ees} {\end{equation*}}
\newcommand{\be} {\begin{equation}}
\newcommand{\ee} {\end{equation}}
\newcommand{\bea} {\begin{eqnarray}}
\newcommand{\eea} {\end{eqnarray}}
\newcommand{\R}{\mathbb R}
\newcommand{\C}{\mathbb C}
\newcommand{\Z}{\mathbb Z}%
\newcommand{\N}{\mathbb N}
\newcommand{\X}{\mathbb{X}}
\newcommand{\M}{\mathcal{M}}
\newcommand{\what}{\widehat}
\newcommand{\ol}{\overline}
\newcommand{\mf}{\mathfrak}
\renewcommand{\Im}{\text{Im}}
\renewcommand{\Re}{\text{Re}}
\renewcommand{\Re}{\operatorname{Re}}
\renewcommand{\Im}{\operatorname{Im}}
\newcommand{\abs}[1]{\lvert#1\rvert}
\def \ee {\end{equation}}
\numberwithin{equation}{section}
\title[Weighted Fourier inequalities on symmetric spaces]{Weighted Fourier inequalities and application of restriction theorems on rank one Riemannian symmetric spaces of noncompact type}
\author[Kumar, Pusti, Rana, Singh]{Pratyoosh Kumar, Sanjoy Pusti, Tapendu Rana, Mandeep Singh}
\address{Pratyoosh Kumar,
\endgraf Department of Mathematics,
IIT Guwahati,
\endgraf Guwahati - 781039, Assam, India.}
\email{kumar.pratyoosh@gmail.com, pratyoosh@iitg.in}
\address{Sanjoy Pusti \endgraf Department of Mathematics, IIT Bombay, \endgraf Powai, Mumbai-400076, India.}
\email{spusti@gmail.com, sanjoy@math.iitb.ac.in}
 \address{Tapendu Rana  \endgraf Department of Mathematics: Analysis, Logic and Discrete Mathematics,	\endgraf Ghent University,  Krijgslaan 281, Building S8, B 9000 Ghent, Belgium.} \email{tapendurana@gmail.com, tapendu.rana@ugent.be}
\address{Mandeep Singh,
\endgraf Department of Mathematics,
IIT Guwahati,
\endgraf Guwahati - 781039, Assam, India.}
\email{mandeepgodara27@gmail.com, mandeep.singh@iitg.ac.in}
\date{}
\subjclass[2010]{Primary 43A85, 43A90; Secondary 22E30}
\keywords{Pitt's inequality, spherical functions, Riemannian symmetric spaces}
\begin{document}
\begin{abstract}
This article explores weighted $(L^p, L^q)$ inequalities for the Fourier transform in rank one Riemannian symmetric spaces of noncompact type. We establish both necessary and sufficient conditions for these inequalities to hold. To prove the weighted Fourier inequalities, we apply restriction theorems on symmetric spaces and utilize Calder{\'o}n's estimate for sublinear operators. While establishing the necessary conditions, we demonstrate that Harish-Chandra's elementary spherical functions play a crucial role in this setting. Furthermore, we apply our findings to derive Fourier inequalities with polynomial and exponential weights.

\end{abstract}
\maketitle

\section{Introduction}
Weighted inequalities for the Fourier transform provide a natural measure to characterize both uncertainty and balance between function growth and smoothness.  These weighted and unweighted inequalities are not only essential in harmonic analysis but also find applications in partial differential equations, probability theory, and other areas of mathematics.
Classical results like the Hausdorff-Young inequality have long illustrated the relationship between the norms of a function and its Fourier transform, establishing a foundational framework for more advanced inequalities. Pitt's inequality \cite{Pitt_37} builds on this framework by incorporating weights that precisely account for the distribution of a function and its Fourier transform, providing a more nuanced understanding of their relationship. More precisely, the problem of characterizing pairs of weights $(u,v)$ governing strong-type norm inequalities of the following type
\begin{equation}\label{weight_Rn_int}
 \left( \int_{\R^N} |\mathcal{F}(f)(\xi)|^q u(\xi)^{q} \, d\xi \right)^{\frac{1}{q}}\leq C \left( \int_{\R^N} |f(x)|^{p} v(x)^p  \,dx\right)^{\frac{1}{p}}
\end{equation}
is of considerable importance in analysis, where, as weight, we refer to non-negative measurable functions and $\mathcal{F}f $ denotes the classical Fourier transform of $f $, defined by
\begin{align*}
    \mathcal{F}f(\xi) =\int_{\R^{N}}f(x) e^{-i \langle \xi, x\rangle} \,dx.
\end{align*}
 The main goal of this article is to obtain necessary and sufficient conditions on weights for which analogs of \eqref{weight_Rn_int} hold in the context of rank one Riemannian symmetric spaces of noncompact type, which includes all hyperbolic spaces.

In the Euclidean context, when we take power weights $u_{\sigma}(\xi):= |\xi|^{-\sigma}$ and $v_{\kappa}(x):= |x|^{\kappa}$, inequality \eqref{weight_Rn_int} transforms into the classical Pitt's inequality introduced by H.R. Pitt in 1937 \cite{Pitt_37}. This inequality for power weights was subsequently generalized by Stein \cite{St_56} and  Benedetto and Heinig \cite{BH_03}. The problem of extending it to more general weights has been extensively studied since the mid-1970s, beginning with Muckenhoupt's paper \cite{Mu_78}, where he formulated this problem for the Fourier transform and derived some sufficient conditions. 
Later, in 1983, Muckenhoupt \cite{Mu_83_T}, as well as Jurkat and Sampson \cite{JS_84}, gave different but equivalent conditions (see \cite{Mu_83_P}) for the pair of weights \(u\) and \(v\) for which \eqref{weight_Rn_int} holds, when \(p\) and \(q\)   not both equal to 2. Heinig \cite{He_84} independently obtained the same result by proving the following theorem. 

Let $u^*$ denote the non-increasing rearrangement of a function $u$ in $\mathbb{R}^N$, and $p' = p/(p-1)$ be the conjugate exponent of $1 \leq p \leq \infty$. The result of Heinig \cite[Theorem 3.1]{He_84} can be stated as follows:
\begin{theorem}\label{thm_heinig}
    Let $1\leq p\leq q\leq \infty$. If $p<\infty$ and $p,q$ not both equal to $2$, then 
    \begin{align}\label{suf_p,q}
        \sup\limits_{s>0} \left( \int_0^{1/s} u^*(t)^q\, dt \right)^{\frac{1}{q}} \left( \int_0^{s} \left(1/v \right)^*(t)^{-p'} \, dt \right)^{\frac{1}{p'}} <\infty
    \end{align}
    implies \eqref{weight_Rn_int} holds. If $p=q=2$, then \eqref{suf_p,q} and
    \begin{align}\label{suf_p=q=2}
        \sup\limits_{s>0} \left( \int_0^{1/s}u^*(t)^2 \,t^{-1}\,  dt \right)^{\frac{1}{2}} \left( \int_0^{s} \left(1/v \right)^*(t)^{-2} \, t^{-1} \, dt \right)^{\frac{1}{2}} <\infty
    \end{align}
    imply \eqref{weight_Rn_int}.
\end{theorem}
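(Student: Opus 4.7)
The plan is to follow the classical rearrangement approach of Muckenhoupt--Jurkat--Sampson--Heinig, which has three ingredients: (i) reduction of both sides of \eqref{weight_Rn_int} to integrals of non-increasing rearrangements via the Hardy--Littlewood inequality, (ii) a pointwise estimate on $(\mathcal{F}f)^{**}$ by an averaging of $f^{*}$ coming from the Hausdorff--Young inequality, and (iii) a weighted Hardy inequality whose two-weight Muckenhoupt--Bradley condition is precisely \eqref{suf_p,q} after a change of variable.

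First, using the Hardy--Littlewood rearrangement inequality $\int|gh|\,dx \leq \int g^{*}h^{*}\,dt$, I would bound
\[
\int_{\R^{N}} |\mathcal{F}f(\xi)|^{q}\, u(\xi)^{q}\, d\xi \;\leq\; \int_{0}^{\infty} \bigl((\mathcal{F}f)^{*}(t)\bigr)^{q}\, u^{*}(t)^{q}\, dt,
\]
and rewrite the $\|vf\|_{L^{p}}$ norm in terms of $f^{*}$ and $(1/v)^{*}$ via the submultiplicative bound $f^{*}(t) \leq 2(vf)^{*}(t/2)\,(1/v)^{*}(t/2)$ (or, equivalently, by a duality argument). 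Next, I would establish the Calder\'on--Jodeit--Torchinsky estimate
\[
(\mathcal{F}f)^{**}(t) \;\leq\; \frac{C}{t}\int_{0}^{1/t} f^{*}(s)\, ds, \qquad t > 0,
\]
by splitting $f = f\chi_{\{|f|>f^{*}(1/t)\}} + f\chi_{\{|f|\leq f^{*}(1/t)\}}$ and applying, respectively, the Hausdorff--Young endpoints $\|\mathcal{F}\|_{L^{1}\to L^{\infty}}\leq 1$ and $\|\mathcal{F}\|_{L^{2}\to L^{2}} = c_{N}$ on the two pieces. Together these steps reduce \eqref{weight_Rn_int} to a weighted estimate for the Hardy-type averaging operator $g^{*}(t)\mapsto t^{-1}\int_{0}^{1/t}g^{*}(s)\,ds$.

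Finally, the Muckenhoupt--Bradley two-weight theorem for the Hardy operator in the range $1 \leq p \leq q < \infty$ yields the required $L^{p}\to L^{q}$ bound if and only if a standard two-weight supremum involving $u^{*}$ and $(1/v)^{*}$ is finite; after the substitution $t \mapsto 1/t$ this supremum is exactly \eqref{suf_p,q}. I expect the main obstacle to be the diagonal case $p = q = 2$, where the Calder\'on--Jodeit--Torchinsky estimate is too lossy, since the Hausdorff--Young step already consumes the single power gain available. Here one must instead combine the Plancherel identity with a sharper Hardy inequality carrying the weight $t^{-1}$, which is exactly what forces the additional condition \eqref{suf_p=q=2}. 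The remaining verifications, namely the endpoint cases $p = 1$ or $q = \infty$ and the change-of-variable bookkeeping, are then routine.
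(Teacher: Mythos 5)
Your overall architecture (rearrangement of both sides, a pointwise bound on the rearrangement of $\mathcal Ff$, then weighted Hardy inequalities) is the right one and is indeed the Muckenhoupt--Jurkat--Sampson--Heinig route, which this paper mirrors for its symmetric-space analogue. But the pivotal estimate you assert,
\[
(\mathcal Ff)^{**}(t)\;\le\;\frac{C}{t}\int_0^{1/t}f^*(s)\,ds,
\]
is false, and the gap it conceals is exactly where the content of the theorem lives. The splitting $f=f\chi_{\{|f|>f^*(1/t)\}}+f\chi_{\{|f|\le f^*(1/t)\}}$ combined with the $(1,\infty)$ and $(2,2)$ endpoints does not yield a single local term; it yields Calder\'on's two-term bound (Theorem \ref{th:1.10} applied to the segment joining $(1,0)$ and $(1/2,1/2)$),
\[
(\mathcal Ff)^{*}(t)\;\le\; C\left(\int_0^{1/t}f^*(s)\,ds\;+\;t^{-1/2}\int_{1/t}^{\infty}s^{-1/2}f^*(s)\,ds\right),
\]
and the tail term cannot be discarded: already for a Gaussian one has $(\mathcal Ff)^{**}(t)\asymp t^{-1}$ as $t\to\infty$, whereas $t^{-1}\int_0^{1/t}f^*(s)\,ds\asymp t^{-2}$.

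Once the tail term is kept, it must be fed into the \emph{dual} Hardy inequality (Theorem \ref{thm_brad}(ii)), whose Muckenhoupt--Bradley condition is a second supremum involving $\int_{1/s}^{\infty}u^*(t)^q\,t^{-q/2}\,dt$ and the corresponding tail integral of $(1/v)^*$ --- the Euclidean counterpart of \eqref{uv_glo_pitt_int}. The theorem then rests on the observation (Heinig's Proposition 2.6, generalized here as Lemma \ref{1em_imply_new} with $q_0=2$) that, by the monotonicity of $u^*$ and $(1/v)^*$, the local condition \eqref{suf_p,q} implies this tail condition precisely when $p<2$ or $q>2$, and fails to do so only when $p=q=2$; that is why the extra hypothesis \eqref{suf_p=q=2} appears exactly on the diagonal. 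With your one-term estimate nothing distinguishes $p=q=2$ --- the forward Hardy inequality is perfectly valid there --- so your closing remark about Plancherel and a ``sharper Hardy inequality with weight $t^{-1}$'' names the right condition but not the mechanism that forces it. To repair the argument: retain both terms of the Calder\'on bound, treat the local term with Theorem \ref{thm_brad}(i) and the tail term with Theorem \ref{thm_brad}(ii), and prove the implication between the two supremum conditions off the diagonal.
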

Conversely, under additional monotonicity assumptions on the weights, it can be shown that \eqref{suf_p,q} is also necessary \cite{Mu_83_T, He_84}. More generally, the following result on necessary conditions for radial weights in the Euclidean setting is known; see \cite{He_84, DLDS_17}.
\begin{theorem}\label{thm_nec_int_RN}
  Suppose that the inequality \eqref{weight_Rn_int} holds for $1<p,q<\infty$, and  $u$ and $v$ are two radial weights in $\R^N$. Then we have the following
\begin{equation}\label{nec_org_RN}
    \sup_{s>0} \left(\int_{|\xi|<{\theta_0/s}} u(\xi)^q\, d\xi \right)^{\frac{1}{q}} \left(\int_{|x|<s} v(x)^{-p'}\,   dx\right)^{\frac{1}{p'}}< \infty,
\end{equation}
where $\theta_0$ is any positive number less than $q_{N/2-1}$, the first zero of Bessel function $J_{N/2-1}$. In particular, $q_{N/2-1} \geq \pi/2$.
\end{theorem}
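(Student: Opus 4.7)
The strategy is the classical one for weighted Fourier inequalities on Euclidean space: feed \eqref{weight_Rn_int} a carefully designed non-negative, radial test function supported in $B(0,s)$, obtain a matching lower bound on $|\widehat{f}(\xi)|$ throughout a reciprocal ball of radius $\theta_0/s$, and read off the necessary condition by rearranging. The radiality of $v$ is essential: it guarantees that the natural extremiser based on $v^{-p'}$ remains radial, so one may profitably invoke the explicit Hankel-type representation of the Fourier transform of a radial function.

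Fix $s>0$ and put
\[
f_s(x) := v(x)^{-p'}\,\chi_{B(0,s)}(x).
\]
Since $v$ is radial, so is $f_s\geq 0$. Using the identity $p(1-p')=-p'$ one computes $\|v f_s\|_{L^p}^p = V(s)$, where $V(s):=\int_{B(0,s)} v(x)^{-p'}\,dx$. For the Fourier side, because $f_s$ is radial,
\[
\widehat{f_s}(\xi) \;=\; c_N\!\int_0^s f_s(r)\,\mathcal{J}_{N/2-1}(|\xi|\,r)\,r^{N-1}\,dr,
\]
with $\mathcal{J}_\nu(z) := 2^\nu\Gamma(\nu+1)z^{-\nu}J_\nu(z)$ satisfying $\mathcal{J}_\nu(0)=1$ and $\mathcal{J}_\nu>0$ on $[0,q_\nu)$. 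For $|\xi|<\theta_0/s$ with $\theta_0<q_{N/2-1}$, the argument $|\xi|\,r$ stays in $[0,\theta_0]$, so $\mathcal{J}_{N/2-1}(|\xi|r)\geq c_{\theta_0}>0$, giving
\[
\widehat{f_s}(\xi) \;\geq\; c'_{\theta_0}\,V(s).
\]
A more elementary route avoiding Bessel asymptotics is to write $\Re\,\widehat{f_s}(\xi)=\int f_s(x)\cos\langle\xi,x\rangle\,dx$; if $\theta_0<\pi/2$ then $\cos\langle\xi,x\rangle\geq\cos\theta_0>0$ throughout the support, directly yielding the supplementary assertion $q_{N/2-1}\geq\pi/2$ without appealing to detailed information about Bessel zeros.

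Inserting the two estimates into \eqref{weight_Rn_int} gives
\[
c'_{\theta_0}\,V(s)\,\Bigl(\int_{|\xi|<\theta_0/s}u(\xi)^q\,d\xi\Bigr)^{1/q} \;\leq\; \|u\widehat{f_s}\|_{L^q} \;\leq\; C\,V(s)^{1/p},
\]
and dividing through by $V(s)$ (noting $1-1/p=1/p'$) and taking the supremum over $s$ yields \eqref{nec_org_RN}. The possibility $V(s)=\infty$ is handled by the standard device of replacing $f_s$ with $v^{-p'}\chi_{B(0,s)\cap\{v\geq 1/n\}}$, running the same argument to obtain a uniform bound, and passing to the limit by monotone convergence; this also rules out the anomalous case $V(s)=\infty$ while $\int_{|\xi|<\theta_0/s}u^q>0$. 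The main delicate point in the whole argument is the positivity lower bound on $\widehat{f_s}$ at the correct reciprocal scale: the cosine estimate only yields the threshold $\pi/2$, and recovering the sharper constant $q_{N/2-1}$ requires exploiting the non-vanishing of $J_{N/2-1}$ on $[0,q_{N/2-1})$ through the Bessel representation of the Fourier transform of a radial function.
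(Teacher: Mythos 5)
Your proof is correct and follows exactly the route the paper itself indicates for this (cited) Euclidean result: test the inequality on $f_s=v^{-p'}\chi_{B(0,s)}$, use positivity of the normalized Bessel kernel below its first zero (equivalently, the cosine bound for $\theta_0<\pi/2$) to get $\widehat{f_s}\gtrsim_{\theta_0}\int_{B(0,s)}v^{-p'}$ on the reciprocal ball, and divide; this is the same scheme the paper then adapts to symmetric spaces in Theorem 4.3, with the truncation $\{v\ge 1/n\}$ handling the case $\int_{B(0,s)}v^{-p'}=\infty$ as you describe.
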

 We would like to mention that one can generalize this result further by obtaining necessary conditions even for non-radial weights as well, for which we refer to \cite{La_94, Be_11, DLDS_17}. However, for Fourier inequalities involving radial weights, if $u$ and $v$ are radial with $u(|\cdot|)$ being non-increasing and $v(|\cdot|)$ being non-decreasing, then \eqref{nec_org_RN} is both necessary and sufficient for the validity of \eqref{weight_Rn_int}. Specifically, when $u(\xi)$ and $v(x)$ are  power weights of the form $u_{\sigma}(\xi) = |\xi|^{-\sigma}$ and $v_{\kappa}(x) = |x|^{\kappa}$, we arrive at the classical Pitt's inequality
\begin{equation}\label{Clss_Pitt_Rn_pw}
 \left( \int_{\R^N} |\mathcal{F}(f)(\xi)|^q |\xi|^{-q\sigma}  \, d\xi \right)^{\frac{1}{q}}\leq C \left( \int_{\R^N} |f(x)|^{p}  |x|^{p \kappa} dx \right)^{\frac{1}{p}},
\end{equation}
with $1<p \leq q<\infty$, which is valid if and only if
\begin{equation}
    \begin{aligned}\label{k-s=N}
        \kappa-\sigma & = N\left(1-\frac{1}{p}-\frac{1}{q} \right),\\
        0\leq \sigma <\frac{N}{q}, \qquad &\text{ and } \qquad 0\leq \kappa< \frac{N}{p'}; 
    \end{aligned}
\end{equation}
see \cite{Pitt_37, St_56}, and \cite{BH_03}. Since the inequality \eqref{Clss_Pitt_Rn_pw} is one of the classical results in Fourier analysis, specifically encompassing the Hausdorff-Young inequality ($q=p'\geq 2$, $\sigma=\kappa=0$), Paley's inequality ($1<p=q \leq 2$, $\kappa=0$) and Hardy-Littlewood inequality ($2\leq p=q<\infty$, $\sigma=0$), among others,  it has continued to be a subject of extensive research by numerous authors, resulting in a substantial body of literature: see, for instance, \cite{JT_70, BH_92, La_94, BH_03, Be_08} and the reference within. Notably, in the last decade, Tikhonov and his collaborators have significantly advanced the study of weighted Fourier inequalities; see, for example, \cite{LT_12, DGT_13, GIT_16, DLDS_17}. 

Inequalities of the type \eqref{weight_Rn_int} are not only of interest by themselves but also have significant applications; see \cite{BL_94}. They describe the balance between the relative sizes of a function and its transform at infinity and can be regarded as a quantitative expression of the uncertainty principle \cite{Be_95, Jo_16, GIT_16}. Additionally, these inequalities play an important role in restriction theory  \cite{BH_92, BS_92}.

This article contributes to the literature by establishing weighted $(L^p, L^q)$ Fourier inequalities or Pitt's inequalities in the context of Riemannian symmetric spaces $\X = G/K$, where $G$ is a real rank one noncompact connected semisimple Lie group with finite center, and $K$ is a maximal compact subgroup of $G$. Additionally, our results and techniques are applicable to harmonic $NA$ groups, also known as Damek-Ricci spaces. We establish both sufficient and necessary conditions for the validity of these inequalities. Furthermore, we apply our results to obtain Fourier inequalities with polynomial and exponential weights, investigating how the exponential volume growth of the Riemannian symmetric spaces influences the behavior of these inequalities.  For any other unexplained notation, we refer the reader to Section \ref{Preliminaries}.

From this point onward,  $u$ and $v$ will represent two non-negative measurable functions defined over $\mathbb{R}$ and $\mathbb{X}$, respectively. The non-increasing rearrangement of $u$ (on $\R$) with respect to the Plancherel measure $|c(\lambda)|^{-2} d\lambda$ will be denoted by $U = u^{\star}$, and the non-increasing rearrangement of $v$ (on $\X$) with respect to the Haar measure will be denoted by $v^*$. We let $1/V = (1/v)^*$. We now present our first main results of this article, which can be viewed as an analog of Theorem \ref{thm_heinig}.
\begin{theorem}\label{thm_Pitt_r_int}
Let $1 \leq p \leq q \leq \infty$. If $p < \infty$ and $p, q$ are not both equal to 2, and
\be \label{uv_loc_int}
\sup_{s>0} \left(\int_{0}^{s^{-1}} U(t)^q \, dt\right)^{\frac{1}{q}}\left(\int_{0}^s V(t)^{-p'} \, dt\right)^{\frac{1}{p'}} < \infty,
\ee
then  there exists a constant $C>0$ such that for all $f \in C_c^{\infty}(\X)$, the following holds
\be \label{pitt_rad_int}
\left(\int_{\R} \left( \int_K\left|  \widetilde{f}(\lambda,k) \right|^{2} dk\right)^{\frac{q}{2}}    u(\lambda)^q |{c(\lambda)}|^{-2} \,d\lambda\right)^{\frac{1}{q}} \leq C\left(\int_{{\X}}{\left|  f(x)\right| }^p  v(x)^p \,dx\right)^{\frac{1}{p}}. 
\ee
If p = q = $2$ then \eqref{uv_loc_int}  and
\be \label{uv_glo_pitt_int}
\sup_{s>0}\left(\int_{s^{-1}}^{\infty} U(t)^2 t^{-1}dt\right)^{\frac{1}{2}}\left(\int_s^{\infty} V(t)^{-2}t^{-1} dt\right)^{\frac{1}{2}} < \infty,
\ee
imply $(\ref{pitt_rad_int})$.
\end{theorem}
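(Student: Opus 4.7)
The strategy is to adapt Heinig's argument for Theorem \ref{thm_heinig} to the symmetric space setting by combining restriction-type mapping properties of the Helgason Fourier transform with Calder\'on's rearrangement estimate. Define the sublinear operator
\[
Tf(\lambda) := \left(\int_K |\widetilde{f}(\lambda,k)|^2\, dk\right)^{1/2}, \qquad f \in C_c^\infty(\mathbb{X}),\ \lambda \in \mathbb{R}.
\]
Two endpoint bounds on $T$ are available essentially for free: the trivial estimate $|\widetilde{f}(\lambda,k)| \leq \|f\|_{L^1(\mathbb{X})}$, coming from $|\phi_\lambda(x)| \leq 1$ for real $\lambda$, gives $T : L^1(\mathbb{X}) \to L^\infty(\mathbb{R})$, while Plancherel's theorem for the Helgason transform gives $T : L^2(\mathbb{X}) \to L^2(\mathbb{R}, |c(\lambda)|^{-2}\, d\lambda)$. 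These play the role of the Hausdorff--Young bounds that underpin the Euclidean version.

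The next step is to apply an appropriate form of Calder\'on's rearrangement estimate to these two endpoint bounds to obtain a pointwise majorant for $(Tf)^\star(t)$ (rearranged with respect to the Plancherel measure $|c(\lambda)|^{-2}\,d\lambda$) in terms of $f^*$ (rearranged with respect to Haar measure on $\mathbb{X}$). The majorant splits into two pieces corresponding to integrals of $f^*$ over complementary intervals separated by a threshold of the form $s \approx t^{-1}$; this split produces the low-frequency and high-frequency contributions which will eventually be matched to conditions (\ref{uv_loc_int}) and (\ref{uv_glo_pitt_int}) respectively. Using the Hardy--Littlewood rearrangement inequality to rewrite both sides of (\ref{pitt_rad_int}) in terms of $U$, $V$, $(Tf)^\star$ and $f^*$, the problem reduces to a one-dimensional weighted Hardy-type inequality; conditions (\ref{uv_loc_int}) and (\ref{uv_glo_pitt_int}) are exactly the Muckenhoupt--Bradley-type testing conditions that guarantee the boundedness of the corresponding Hardy averaging operators on the two regimes.

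The main technical obstacle is the behavior of the Plancherel density $|c(\lambda)|^{-2}$, which vanishes polynomially at $\lambda=0$ but grows polynomially at infinity, so that the geometry of the rearrangement and the structure of the spectral integrals differ qualitatively in the low- and high-frequency regions. When $(p,q) \neq (2,2)$, Calder\'on's interpolation provides enough off-diagonal gain that the single condition (\ref{uv_loc_int}) handles both regimes simultaneously. At the diagonal endpoint $p = q = 2$ no such interpolation gain is available, and the complementary "global" condition (\ref{uv_glo_pitt_int}) is needed in order to control the high-frequency spectral tail via the adjoint Hardy operator. Carefully tracking the thresholds in Calder\'on's estimate so that they align with these two regimes of $|c(\lambda)|^{-2}$ is the delicate point of the proof.
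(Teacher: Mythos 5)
Your overall architecture is the same as the paper's: the paper proves the more general Theorem \ref{thm_Pitt_nr_intn} for the operator $\mathcal{T}_{q_0}f(\lambda)=\|\widetilde{f}(\lambda+i\rho_{q_0},\cdot)\|_{L^{q_0}(K)}$ via a $(1,\infty)$ restriction bound, a Hausdorff--Young/Plancherel bound, Calder\'on's rearrangement estimate, and the Muckenhoupt--Bradley Hardy inequalities, and then obtains Theorem \ref{thm_Pitt_r_int} as the case $q_0=2$. However, there are two concrete gaps. First, your $(1,\infty)$ endpoint is justified incorrectly: the pointwise bound $|\widetilde{f}(\lambda,k)|\le\|f\|_{L^1(\X)}$ is false, because the kernel $e^{(i\lambda-\rho)H(x^{-1}k)}$ has modulus $e^{-\rho H(x^{-1}k)}$, which is unbounded on $\X\times K$ (this is exactly the point where the paper notes the Euclidean argument breaks down), and $|\phi_\lambda(x)|\le 1$ controls only the $K$-average of the kernel, not the kernel itself. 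What is true, and what the proof needs, is the averaged bound $\bigl(\int_K|\widetilde{f}(\lambda,k)|^2\,dk\bigr)^{1/2}\le\|f\|_{L^1(\X)}$, which follows from Minkowski's integral inequality together with $\int_K e^{-2\rho H(xk)}\,dk=\phi_{-i\rho}(x)=1$; equivalently, it is the restriction theorem (Theorem \ref{RS_4.2} with $p=1$). This is fixable but must be stated correctly, since it is the nontrivial input replacing the boundedness of $e^{-i\langle\xi,x\rangle}$.

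Second, and more seriously, your explanation of why the single condition \eqref{uv_loc_int} suffices when $(p,q)\neq(2,2)$ does not constitute a proof. The Calder\'on majorant for the segment with endpoints $(1,0)$ and $(1/2,1/2)$ is the same for every $(p,q)$, namely $(Tf)^{\star}(t)\lesssim\int_0^{1/t}f^*(s)\,ds+t^{-1/2}\int_{1/t}^{\infty}s^{-1/2}f^*(s)\,ds$; there is no ``off-diagonal gain'' that makes the second piece disappear. Both pieces must be fed into Theorem \ref{thm_brad}, and the second piece always requires the global testing condition $\sup_{s>0}\bigl(\int_{s^{-1}}^{\infty}U(t)^q t^{-q/2}\,dt\bigr)^{1/q}\bigl(\int_s^{\infty}V(t)^{-p'}t^{-p'/2}\,dt\bigr)^{1/p'}<\infty$, which reduces to \eqref{uv_glo_pitt_int} only at $p=q=2$. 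The missing ingredient is the paper's Lemma \ref{1em_imply_new} (generalizing Heinig's Proposition 2.6): because $U$ is non-increasing and $V$ is non-decreasing, the local condition \eqref{uv_loc_int} \emph{implies} this global condition whenever $p<2$ or $q>2$, which under $1\le p\le q\le\infty$ is exactly the hypothesis that $p,q$ are not both $2$. Without that lemma the first half of the theorem is not established. Relatedly, the ``low-frequency versus high-frequency'' picture and the role you assign to the two growth regimes of $|c(\lambda)|^{-2}$ are misleading: the splitting occurs in the rearrangement variable $t$, and the explicit form of the Plancherel density never enters the sufficiency proof (only the examples in Section \ref{sec_example}).
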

In the theorem above, we consider the weighted Fourier inequality by treating the Fourier transform $\widetilde{f}$ as a function on $\mathbb{R} \times K$. However, it is important to note that the holomorphic extension of the Fourier transform of an $L^p(\X)$ function $(1 \leq p < 2)$ is perhaps the most distinctive feature of Riemannian symmetric spaces (which we will refer to as symmetric spaces) of noncompact type. Recognizing this characteristic, we investigate weighted Fourier inequalities where the Fourier transform depends not only on $\mathbb{R}$ but also on non-real $\lambda$. Consequently, the following result does not have any counterpart in $\mathbb{R}^N$.
\begin{theorem}\label{thm_Pitt_nr_intn}
   Let $1<q_0<\infty $ be fixed and $\rho_{q_0} = (2/{q_0}-1)\rho$. Suppose that for a given $1 \leq p \leq  q \leq \infty$, the functions $U$ and $V$ satisfy the following inequalities:
    \begin{equation} \label{uv_loc_nr_int1}
\sup_{s>0}\left(\int_{0}^{s^{-1}} U(t)^q dt\right)^{\frac{1}{q}}\left(\int_{0}^s V(t)^{-p'} dt\right)^{\frac{1}{p'}} < \infty,
\end{equation}
and 
\begin{equation}\label{uv_glo_nr_int1}
     \sup_{s>0}\left(\int_{s^{-1}}^{\infty} U(t)^q t^{-\frac{q}{\max\{ q_0,q_0'\}}}dt\right)^{\frac{1}{q}}\left(\int_s^{\infty} V(t)^{-p'}t^{-\frac{p'}{\max\{q_0,q_0'\}}} dt\right)^{\frac{1}{p'}} < \infty.
\end{equation}
Then there exists a constant $C>0$ such that for all $f \in C_c^{\infty}(\X)$, the following holds
\be\label{pitt_nrad_int}
\left(\int_{\R}\left( \int_K\left|  \widetilde{f}(\lambda +i\rho_{q_0},k) \right|^{q_0} dk\right)^{\frac{q}{q_0}} u(\lambda)^q |{c(\lambda)}|^{-2}\, d\lambda\right)^{\frac{1}{q}} \leq C\left(\int_{\X} |f(x)|^{p} v(x)^p \,  dx\right)^{\frac{1}{p}}.
\ee
\end{theorem}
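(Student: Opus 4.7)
The plan is to mirror the proof of Theorem \ref{thm_Pitt_r_int} while replacing the Plancherel $L^{2}(K)$ fibres by $L^{q_0}(K)$ fibres on the shifted spectral line $\Im\lambda=\rho_{q_0}$. For $f\in L^{q_0}(\X)$ the Helgason--Fourier transform extends holomorphically to the interior and boundary of the strip $\{|\Im\lambda|\le|\rho_{q_0}|\}$, so the sublinear operator
\[
\mathcal{T}_{q_0}f(\lambda)\;:=\;\left(\int_K \bigl\lvert\widetilde f(\lambda+i\rho_{q_0},k)\bigr\rvert^{q_0}\,dk\right)^{1/q_0},\qquad \lambda\in\R,
\]
is well defined, and my job is to bound its $L^{q}(\R,u^{q}|c(\lambda)|^{-2}d\lambda)$-norm by the $L^{p}(\X,v^{p}dx)$-norm of $f$.

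First, I would invoke the restriction theorem on the shifted line developed earlier in the paper, which supplies a basic $(L^{p_1}(\X),L^{q_1}_{\mathrm{Pl}})$ bound for $\mathcal{T}_{q_0}$. After absorbing the Plancherel weight $|c(\lambda)|^{-2}$, this estimate exhibits polynomial decay of order $|\lambda|^{-1/\max\{q_0,q_0'\}}$ as $|\lambda|\to\infty$. For $q_0\le 2$ the decay is driven by the Kunze--Stein phenomenon applied to convolution with $\varphi_{\lambda+i\rho_{q_0}}$ (giving $|\lambda|^{-1/q_0'}$); for $q_0\ge 2$ it arises by duality (giving $|\lambda|^{-1/q_0}$). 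Either way, the exponent $1/\max\{q_0,q_0'\}$ is precisely the one encoded in the global condition \eqref{uv_glo_nr_int1}.

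Next I would feed this restriction input, together with the trivial bound $\mathcal{T}_{q_0}f(\lambda)\le\|f\|_{L^{1}(\X)}$, into Calder\'on's rearrangement principle for sublinear operators. This yields a pointwise estimate for the non-increasing rearrangement $(\mathcal{T}_{q_0}f)^{\star}$ (taken with respect to Plancherel measure) of the schematic shape
\[
(\mathcal{T}_{q_0}f)^{\star}(t)\;\lesssim\;\int_{0}^{t} f^{*}(s)\,ds\;+\;t^{-1/\max\{q_0,q_0'\}}\int_{t}^{\infty} f^{*}(s)\,ds,
\]
whose first summand controls the small-$|\lambda|$ regime and whose second controls the large-$|\lambda|$ regime. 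Substituting this bound into the left-hand side of \eqref{pitt_nrad_int} reduces the target inequality to a pair of one-dimensional weighted Hardy inequalities on $(0,\infty)$: the local piece, unweighted on the rearranged side, leads via the standard Muckenhoupt--Heinig criterion to condition \eqref{uv_loc_nr_int1}; the global piece carries the extra factor $t^{-1/\max\{q_0,q_0'\}}$ and its Heinig criterion produces exactly \eqref{uv_glo_nr_int1}.

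The main obstacle is the first step: producing a restriction estimate on the shifted line with the \emph{sharp} polynomial decay $|\lambda|^{-1/\max\{q_0,q_0'\}}$. This requires careful asymptotics of $\varphi_{\lambda+i\rho_{q_0}}$ at the boundary of the $L^{q_0}$-strip of holomorphy, together with a precise application of the Kunze--Stein phenomenon, and has no Euclidean analogue. The consistency check $q_0=2$ (where $\rho_{q_0}=0$) forces the global condition to specialise to the $p=q=2$ part of Theorem \ref{thm_Pitt_r_int}, and the exponent $\max\{q_0,q_0'\}$ is exactly what makes this specialisation correct.
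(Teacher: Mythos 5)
Your overall architecture is the same as the paper's: define the sublinear operator $\mathcal{T}_{q_0}f(\lambda)=\|\widetilde f(\lambda+i\rho_{q_0},\cdot)\|_{L^{q_0}(K)}$, establish two endpoint bounds, apply Calder\'on's rearrangement estimate, and reduce to a pair of weighted Hardy inequalities. However, there is a genuine gap in how you propose to obtain the second endpoint. You assert that the key input is a restriction estimate on the shifted line exhibiting \emph{pointwise} polynomial decay $|\lambda|^{-1/\max\{q_0,q_0'\}}$ as $|\lambda|\to\infty$, and you flag proving this as "the main obstacle," requiring sharp asymptotics of $\phi_{\lambda+i\rho_{q_0}}$. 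No such pointwise-decaying restriction bound is available (the Ray--Sarkar restriction theorem gives a bound uniform in $\lambda$, with no decay), and none is needed. What the argument actually requires is the \emph{integrated} strong type $(q_0,q_0')$ bound $\|\mathcal{T}_{q_0}f\|_{L^{q_0'}(\R,|c(\lambda)|^{-2}d\lambda)}\le C\|f\|_{L^{q_0}(\X)}$, which is exactly the Hausdorff--Young inequality of Ray--Sarkar quoted earlier in the paper (take $p=q=q_0$ there). The exponent $1/\max\{q_0,q_0'\}$ then emerges automatically from the geometry of the interpolation segment with endpoints $(1,0)$ and $(1/q_0,1/q_0')$ in Calder\'on's formula for $S(\sigma)$ --- not from any $\lambda$-asymptotics of spherical functions. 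As written, your plan sends you off to prove a false (or at best unnecessary) lemma.

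A secondary issue: your schematic Calder\'on bound has the breakpoint at $s=t$ and a bare tail integral $\int_t^\infty f^*(s)\,ds$. The correct output of Calder\'on's theorem for this segment is $(\mathcal{T}_{q_0}f)^{\star}(t)\lesssim\int_0^{t^{-1}}f^*(s)\,ds+t^{-1/q_0'}\int_{t^{-1}}^{\infty}s^{-1/q_0'}f^*(s)\,ds$ (for $q_0\le 2$; swap $q_0$ and $q_0'$ otherwise). The reciprocal breakpoint $t^{-1}$ is what makes the local Hardy criterion match the reciprocal pairing $s\mapsto s^{-1}$ in \eqref{uv_loc_nr_int1}, and the internal weight $s^{-1/q_0'}$ in the tail is what produces the factor $t^{-p'/\max\{q_0,q_0'\}}$ on the $V$-side of \eqref{uv_glo_nr_int1}; with your version the Hardy criteria would not reduce to the stated hypotheses. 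These are fixable bookkeeping errors once the correct Calder\'on kernel is used, but they must be fixed for the reduction to close.
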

A critical aspect of our study involves deriving the necessary conditions for weighted Fourier inequalities in these non-Euclidean settings. These conditions are essential for understanding the limitations and scope of such inequalities. The unique exponential volume growth properties of symmetric spaces necessitate modifications to the classical conditions known in Euclidean settings. One key feature in this context is the role of Harish-Chandra's elementary spherical function $\phi_{\lambda}$, which plays a central role in the analysis of symmetric spaces.
\begin{theorem}\label{thm_nec_int}
Let $q_0 \in [1,\infty]$ be fixed. Assume that  $u$ and $v$ are two radial weights such that the Pitt's inequality \eqref{pitt_nrad_int} holds for $1<p,q<\infty$. Then the following condition is necessary
\begin{equation}\label{nec_org}
    \sup_{s>0} \left(\int_{|\lambda|<\theta_0/s} u(\lambda)^q |c(\lambda)|^{-2} \, d\lambda\right)^{\frac{1}{q}} \left(\int_{|x|< s}v(x)^{-p'}  \phi_{i\rho_{q_0}}(x)^{p'}  \,dx\right)^{\frac{1}{p'}}< \infty,
\end{equation}
where $\theta_0$ is any positive number less than $\pi/2$.
\end{theorem}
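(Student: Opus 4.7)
The plan is to test the weighted Fourier inequality \eqref{pitt_nrad_int} against a one-parameter family of radial, non-negative functions $\{f_s\}_{s>0}$ chosen so that both sides of \eqref{pitt_nrad_int} degenerate in a comparable way; the resulting scale-free estimate is precisely \eqref{nec_org}. This is a H\"older-saturation argument in the spirit of the proof of Theorem \ref{thm_nec_int_RN}, the novelty being that the Harish-Chandra elementary spherical function $\phi_{i\rho_{q_0}}$ plays the role that the constant function $1$ plays in $\R^N$.

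The crux of the argument is a pointwise lower bound
\[
\Re\,\phi_{\lambda+i\rho_{q_0}}(x)\;\ge\;\cos(\theta_0)\,\phi_{i\rho_{q_0}}(x),\qquad |x|\le s,\ |\lambda|<\theta_0/s,
\]
valid for any fixed $\theta_0\in(0,\pi/2)$. I would obtain it by applying the Harish-Chandra integral representation $\phi_\mu(x)=\int_K e^{(i\mu-\rho)(H(xk))}\,dk$ at $\mu=\lambda+i\rho_{q_0}$: the imaginary part of $\mu$ produces the positive factor $e^{-(2/q_0)\rho(H(xk))}$, whose $K$-integral is exactly $\phi_{i\rho_{q_0}}(x)$, while the real part contributes an oscillatory factor $e^{i\lambda H(xk)}$. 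Taking real parts yields a cosine, and the standard Helgason inequality $|H(xk)|\le |x|$ guarantees $|\lambda H(xk)|\le\theta_0<\pi/2$, so the cosine is uniformly bounded below by $\cos(\theta_0)>0$.

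The saturating test function is
\[
f_s(x)\;:=\; v(x)^{-p'}\,\phi_{i\rho_{q_0}}(x)^{p'-1}\,\chi_{B_s}(x),\qquad B_s=\{x\in\X:|x|<s\},
\]
which is radial, non-negative and of compact support; a standard mollification together with a density argument extends \eqref{pitt_nrad_int} from $C_c^\infty(\X)$ to $f_s$. The identities $(p'-1)p=p'$ and $p-pp'=-p'$ give
\[
\int_\X |f_s|^p\,v^p\,dx\;=\;\int_{B_s} v^{-p'}\,\phi_{i\rho_{q_0}}^{p'}\,dx\;=:\; A_s^{p'},
\]
while the radiality of $f_s$ makes $\widetilde{f_s}(\lambda+i\rho_{q_0},k)$ independent of $k$ and (by Weyl-invariance of $\phi$) equal to $\int_\X f_s\,\phi_{\lambda+i\rho_{q_0}}\,dx$. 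Combined with the pointwise estimate, this yields
\[
\bigl|\widetilde{f_s}(\lambda+i\rho_{q_0})\bigr|\;\ge\;\cos(\theta_0)\int_\X f_s\,\phi_{i\rho_{q_0}}\,dx\;=\;\cos(\theta_0)\,A_s^{p'}
\]
for every $|\lambda|<\theta_0/s$. Plugging $f_s$ into \eqref{pitt_nrad_int}, restricting the outer integral on the left to $|\lambda|<\theta_0/s$ and using $p'-p'/p=1$, division by $A_s^{p'/p}$ followed by the supremum over $s>0$ produces \eqref{nec_org}.

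The main obstacle is the key pointwise lower bound on $\Re\,\phi_{\lambda+i\rho_{q_0}}$; once it is available, everything else is routine H\"older duality. The constraint $\theta_0<\pi/2$ in the statement is precisely what forces the cosine factor to stay uniformly positive on $K$, and it is the non-Euclidean analogue of the Bessel-zero constraint appearing in Theorem \ref{thm_nec_int_RN}.
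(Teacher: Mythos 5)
Your proposal is correct, and the overall architecture — testing \eqref{pitt_nrad_int} against $f_s=v^{-p'}\phi_{i\rho_{q_0}}^{p'-1}\chi_{B_s}$ and letting H\"older saturate — coincides with the paper's (this is exactly the paper's test function $f_{\kappa}$ with $\kappa=1/(p-1)=p'-1$, and the same exponent bookkeeping). The genuine difference lies in how you obtain the key lower bound $|\widehat{f_s}(\lambda+i\rho_{q_0})|\gtrsim \int f_s\,\phi_{i\rho_{q_0}}$. The paper (Lemma \ref{lem_f^geq}, and Proposition \ref{sph_real_low} for the case $\rho_{q_0}=0$) goes through the Koornwinder--Stanton--Tomas representation \eqref{int_rep_phi}, splitting $\cos(\lambda r+i\rho_{q_0}r)$ into real and imaginary parts and bounding $\cos(\lambda r)$ below for $r\le s$. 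You instead use the $K$-integral representation \eqref{int_rep}, peel off the oscillatory factor $e^{-i\lambda H(xk)}$ from the positive density whose $K$-integral is $\phi_{i\rho_{q_0}}(x)$, and bound the resulting cosine below via $|H(xk)|\le|x|$; that inequality is indeed standard (in rank one it is the convexity statement $H(xk)\in\operatorname{conv}(W\cdot x^{+})=[-|x|,|x|]H_0$). Your route is shorter, avoids the rank-one-specific Koornwinder formula, and would extend essentially verbatim to higher rank, which the paper's argument would not; the paper's route, on the other hand, yields the two-sided comparability and Proposition \ref{sph_real_low} as byproducts of independent interest (though in your setup the matching upper bound is anyway immediate from \eqref{phi_il}). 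Two cosmetic points: the positive factor should read $e^{-(2/q_0')\rho H(xk)}$ rather than $e^{-(2/q_0)\rho H(xk)}$ (this does not affect the argument, since its $K$-integral is still $\phi_{i\rho_{q_0}}(x)$ as you state), and, as in the paper, one must assume $v^{-p'}\phi_{i\rho_{q_0}}^{p'}$ locally integrable for $f_s$ to be admissible after mollification.
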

Our next objective is to establish a particular type of weighted Fourier inequality. The Hausdorff-Young inequality states that the Fourier transform maps functions from $L^p$ spaces to $L^{p'}$ spaces with respect to the Plancherel measure for $1 \leq p \leq 2$. Paley's inequality addresses the following question: Given $p \in [1, 2)$, can we find a measure $\nu$ such that the Fourier transform operator is bounded from $L^p$ to $L^p(\nu)$? Specifically, H{\"o}rmander\cite[Theorem 1.10]{Hormander_Acta_60} proved the following in the context of Euclidean spaces, which also can also be seen as a weighted version of Plancherel’s formula for $L^p(\mathbb{R}^N)$ with $1 < p \leq 2$.
\begin{theorem}
Let $u$ be a positive function in $L^{1, \infty}(\R^N)$. Then, for any $ 1<p\leq 2$, we have
    \begin{align*}
        \left( \int_{\R^N} |\mathcal{F}f(\xi)|^{p} u(\xi)^{2-p} \,d\xi \right)^{\frac{1}{p}}\leq C_p \|f\|_{L^p(\R^N)},
    \end{align*}
where the constant $C_p$ depends only on $p$ and $u$.
\end{theorem}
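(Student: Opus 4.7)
The plan is to deduce the theorem by combining the Lorentz-space refinement of the Hausdorff--Young inequality with O'Neil's generalized H\"older inequality. The fact that $u$ belongs to the weak space $L^{1,\infty}(\R^{N})$ will be absorbed precisely as the second factor in a H\"older pairing in Lorentz spaces.

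First, I would establish the refined Hausdorff--Young bound $\|\mathcal{F}f\|_{L^{p',p}(\R^{N})} \leq C_{p}\|f\|_{L^{p}(\R^{N})}$ for $1 < p \leq 2$. This is a classical Paley-type strengthening obtained via Marcinkiewicz (equivalently real) interpolation between Plancherel's identity $\mathcal{F}\colon L^{2} \to L^{2,2}$ and the trivial bound $\|\mathcal{F}f\|_{\infty}\leq \|f\|_{1}$, viewed as $\mathcal{F}\colon L^{1} \to L^{\infty,\infty}$. The interpolation with source Lorentz index $p$ delivers the target Lorentz index $p$, giving $L^{p,p} \to L^{p',p}$; since $L^{p,p} = L^{p}$, this is the desired bound.

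Next, I would apply O'Neil's generalized H\"older inequality in Lorentz spaces,
\begin{equation*}
\int_{\R^{N}} |FG|\,d\xi \;\leq\; C\,\|F\|_{L^{a,b}}\,\|G\|_{L^{a',b'}}, \qquad \tfrac{1}{a}+\tfrac{1}{a'}=1,\quad \tfrac{1}{b}+\tfrac{1}{b'}\geq 1,
\end{equation*}
to the factorization $F(\xi) = |\mathcal{F}f(\xi)|^{p}$, $G(\xi) = u(\xi)^{2-p}$. By the power rule for Lorentz norms, $\|F\|_{L^{p'/p,1}} = \|\mathcal{F}f\|_{L^{p',p}}^{p}$ and $\|G\|_{L^{1/(2-p),\infty}} = \|u\|_{L^{1,\infty}}^{2-p}$; the H\"older indices $(a,a')=(p'/p,\,1/(2-p))$ and $(b,b')=(1,\infty)$ satisfy $1/a + 1/a' = (p-1) + (2-p) = 1$ and $1/b + 1/b' = 1$. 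Chaining the two steps gives
\begin{equation*}
\int_{\R^{N}} |\mathcal{F}f(\xi)|^{p}\, u(\xi)^{2-p}\,d\xi \;\leq\; C\,\|\mathcal{F}f\|_{L^{p',p}}^{p}\,\|u\|_{L^{1,\infty}}^{2-p} \;\leq\; C'\,\|f\|_{L^{p}}^{p}\,\|u\|_{L^{1,\infty}}^{2-p},
\end{equation*}
and taking $p$-th roots yields the claim with $C_{p} = (C')^{1/p}\|u\|_{L^{1,\infty}}^{(2-p)/p}$. The endpoint $p=2$ reduces immediately to Plancherel.

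The main technical point is the first step: the Marcinkiewicz interpolation must be arranged to deliver the sharp \emph{second} Lorentz index $p$ in the target (not just the weaker $p'$ coming from the classical Hausdorff--Young bound). It is precisely this refinement that allows the weak-$L^{1}$ information on $u$ to be inserted into the subsequent H\"older pairing; with only $\mathcal{F}f \in L^{p'}$, the second Lorentz indices would fail to close and the argument would break down.
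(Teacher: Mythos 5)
Your proposal is correct, but it follows a genuinely different route from the one the paper points to. The paper states this result as H\"ormander's theorem and does not reprove it; the proof it alludes to (``Marcinkiewicz's interpolation in the Euclidean setting'') is the classical change-of-measure argument: one applies Marcinkiewicz interpolation to the operator $Tf(\xi)=\mathcal{F}f(\xi)/u(\xi)$ acting into $L^q(u(\xi)^2\,d\xi)$, where Plancherel gives the strong $(2,2)$ endpoint and the hypothesis $u\in L^{1,\infty}$ is used, via a layer-cake estimate of $\int_{\{u<s\}}u^2\,d\xi\lesssim s$, to verify the weak $(1,1)$ endpoint; the interpolated bound $\int |Tf|^p u^2\,d\xi\lesssim\|f\|_p^p$ is exactly the claim. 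You instead front-load all the interpolation into the unweighted refined Hausdorff--Young inequality $\mathcal{F}\colon L^{p}\to L^{p',p}$ and then insert the weight through O'Neil's H\"older inequality, with $u\in L^{1,\infty}$ entering only through the elementary bound $(u^{2-p})^*(t)\leq \|u\|_{L^{1,\infty}}^{2-p}t^{-(2-p)}$; your index bookkeeping ($1/a+1/a'=(p-1)+(2-p)=1$, second indices $1$ and $\infty$, power rule for Lorentz norms) is all correct, and your remark that the plain $L^{p'}$ Hausdorff--Young bound would not suffice is exactly the right technical point. Your decomposition cleanly separates the Fourier-analytic content from the weight and is closer in spirit to the rearrangement/Lorentz machinery the paper uses in its Section 3; the change-of-measure proof is shorter but entangles the two. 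Note also that neither real-interpolation route transfers directly to the paper's symmetric-space analogue (Theorem 1.7), where the Fourier transform is evaluated at the complex parameter $\lambda+i\rho_p$ depending on $p$, which is why the paper resorts to Stein's analytic interpolation there.
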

We provide the following analog of the theorem above in the context of symmetric spaces of noncompact type with rank one.
\begin{theorem}\label{thm_paley}
    Let $\X$ be a rank one symmetric space of noncompact type with dimension $n\geq 3 $. Assume that $u$ be a positive function in $L^1\left(\R, |c(\lambda)|^{-2}d\lambda\right)$.
Then for $1\leq p\leq 2$, we have 
    \begin{align}\label{paley's}
         \left(  \int_{\R} \int_{K}  \left| \widetilde{f}(\lambda+i\rho_p , k)\right|^p    
  u(\lambda)^{2-p} |c(\lambda)|^{-2}\, dk  \,d\lambda \right)^{\frac{1}{p}} \leq C_p \|u\|_{L^1\left(|c(\lambda)|^{-2}\right)}^{\frac{2}{p}-1} \|f\|_{L^p(\X)}.
    \end{align}
\end{theorem}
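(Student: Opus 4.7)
The argument adapts H\"ormander's classical proof of the Paley-H\"ormander inequality on $\R^N$ to the rank one symmetric space setting. The idea is to interpolate between endpoint estimates at $p=2$ (Plancherel) and $p=1$ (a uniform pointwise bound). Since the operator $f\mapsto \widetilde{f}(\cdot+i\rho_p,\cdot)$ varies with $p$ through the imaginary shift $\rho_p=(2/p-1)\rho$, Riesz-Thorin does not apply directly; we instead invoke Stein's complex interpolation theorem for an analytic family of operators indexed by a complex parameter $z$ in the strip $S:=\{z\in\C:0\le\Re z\le 1\}$.

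At $p=2$, $\rho_p=0$, and the Plancherel formula for the Helgason-Fourier transform yields the identity
\[
\int_{\R}\int_{K}|\widetilde{f}(\lambda,k)|^{2}|c(\lambda)|^{-2}\,dk\,d\lambda=\|f\|_{L^{2}(\X)}^{2},
\]
which is \eqref{paley's} at $p=2$ with constant $1$. At $p=1$, $\rho_{p}=\rho$; in the convention used for the Helgason transform, the plane-wave kernel of $\widetilde{f}(\lambda+i\rho,k)$ is unimodular for $\lambda\in\R$, $k\in K$, giving the pointwise bound $|\widetilde{f}(\lambda+i\rho,k)|\le\|f\|_{L^{1}(\X)}$ uniformly in $(\lambda,k)$. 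Integrating against $u(\lambda)|c(\lambda)|^{-2}\,d\lambda\,dk$ delivers \eqref{paley's} at $p=1$ with constant $\|u\|_{L^{1}(|c|^{-2})}$.

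For $f\in C_{c}^{\infty}(\X)$ the transform $\widetilde{f}(\cdot,k)$ is entire, so on $S$ one may define the analytic family $T_{z}f(\lambda,k):=u(\lambda)^{z}\widetilde{f}(\lambda+iz\rho,k)$. At $\Re z=\theta:=2/p-1$ one has $T_{\theta}f(\lambda,k)=u(\lambda)^{\theta}\widetilde{f}(\lambda+i\rho_{p},k)$, and since $p\theta=2-p$, the $L^{p}(|c|^{-2}d\lambda\,dk)$-norm of $T_{\theta}f$ is precisely the left-hand side of \eqref{paley's}. On the vertical line $\Re z=1$, $|T_{1+iy}f(\lambda,k)|\le u(\lambda)\|f\|_{1}$ by the $p=1$ endpoint bound, whence $\|T_{1+iy}\|_{L^{1}(\X)\to L^{1}(|c|^{-2})}\le\|u\|_{L^{1}(|c|^{-2})}$. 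On $\Re z=0$, the modulus is $|T_{iy}f(\lambda,k)|=|\widetilde{f}(\lambda-y\rho,k)|$, and a real change of variable $\mu=\lambda-y\rho$ combined with Plancherel transforms the norm into
\[
\|T_{iy}f\|_{L^{2}(|c|^{-2})}^{2}=\int_{\R}\int_{K}|\widetilde{f}(\mu,k)|^{2}|c(\mu+y\rho)|^{-2}\,d\mu\,dk.
\]
Using the explicit Gindikin-Karpelevich product formula for the rank-one $c$-function one obtains the admissible-growth comparison $|c(\mu+y\rho)|^{-2}\le A(1+|y|)^{N}|c(\mu)|^{-2}$ uniformly in $\mu\in\R$, yielding $\|T_{iy}\|_{L^{2}(\X)\to L^{2}(|c|^{-2})}\le A'(1+|y|)^{N/2}$. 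Stein's interpolation theorem then gives
\[
\|T_{\theta}f\|_{L^{p}(|c|^{-2}d\lambda dk)}\le C_{p}\|u\|_{L^{1}(|c|^{-2})}^{\theta}\|f\|_{L^{p}(\X)},
\]
which, upon unwinding $T_\theta$ and using $p\theta=2-p$, is precisely \eqref{paley's}.

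The main obstacle is the boundary estimate on $\Re z=0$. Plancherel furnishes exact equality only at $y=0$; for $y\ne 0$ one must uniformly control the quotient $|c(\mu+y\rho)|^{-2}/|c(\mu)|^{-2}$ over all $\mu\in\R$, which requires sharp information on the rank-one Harish-Chandra $c$-function, both in its asymptotics at infinity and its zero structure near the origin. The hypothesis $n\ge 3$ enters here: it guarantees enough vanishing of the Plancherel density at $\mu=0$ to permit a uniform pointwise comparison, thus ruling out the blow-up that would otherwise obstruct admissible growth at the left boundary of $S$.
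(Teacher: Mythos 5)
Your overall strategy --- Stein interpolation for an analytic family between a Plancherel-type bound on one boundary line and an $L^1$ restriction bound on the other --- is the same as the paper's, but your treatment of the $\Re z=0$ boundary contains a genuine gap that your choice of operator family cannot repair. After the change of variables you need the uniform comparison $|c(\mu+y\rho)|^{-2}\le A(1+|y|)^{N}|c(\mu)|^{-2}$ for all $\mu\in\R$; this is false. By Lemma \ref{est_c-2}, $|c(\mu)|^{-2}\asymp \mu^{2}(1+|\mu|)^{n-3}$ vanishes to second order at $\mu=0$, whereas $|c(\mu+y\rho)|^{-2}>0$ there for $y\neq 0$, so the ratio blows up as $\mu\to 0$. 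Your remark that $n\ge 3$ ``guarantees enough vanishing of the Plancherel density at $\mu=0$ to permit a uniform pointwise comparison'' is backwards: the quadratic zero, present in every dimension, is precisely the obstruction. The paper circumvents this by building the factor $(\lambda+z)/(1+|\lambda|)$ into the analytic family and taking the target measure $\lambda^{2}u(\lambda)^{2}(1+|\lambda|)^{n-3}\,d\lambda\,dk$: after translating $\lambda\mapsto\lambda-\xi$, the numerator $|\lambda+\xi|^{2}$ becomes $|\lambda|^{2}$ and reconstitutes the Plancherel density in the new variable, and the hypothesis $n\ge 3$ is used only for the elementary inequality $(1+|\lambda-\xi|)^{n-3}\le C(1+|\xi|)^{n-3}(1+|\lambda|)^{n-3}$. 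Some such modification of $T_z$ or of the target measure is indispensable; as written, your $L^{2}$ endpoint estimate fails.

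A secondary error: with the paper's convention $\widetilde f(\lambda,k)=\int_{\X} f(x)e^{(i\lambda-\rho)H(x^{-1}k)}\,dx$, the kernel on the line $\Im\lambda=\rho$ has modulus $e^{-2\rho H(x^{-1}k)}$, which is not bounded by $1$ (the unimodular line is $\Im\lambda=-\rho$); indeed the paper stresses that the unboundedness of this kernel is what distinguishes the setting from $\R^N$. So the pointwise bound $|\widetilde f(\lambda+i\rho,k)|\le\|f\|_{L^1(\X)}$ is false. What is true, and what the paper uses, is the $L^{1}(K)$ restriction estimate $\int_{K}|\widetilde f(\lambda+i\rho,k)|\,dk\le\|f\|_{L^1(\X)}$ of Theorem \ref{RS_4.2}, which suffices because the target norm integrates over $K$. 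Your $L^{1}$ endpoint survives once you substitute this justification, but the $L^{2}$ endpoint does not survive without restructuring the analytic family as above.
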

Before we delve into the specifics of this article, let us compare our analysis of weighted Fourier inequalities in $\X$ rank one symmetric space of noncompact type with that of Euclidean settings. In Euclidean settings,  to prove Pitt's inequality, authors (\cite{He_84, GLT_18}) generally use the Plancherel theorem and the strong type $(1,\infty)$ boundedness of the classical Fourier transform operator $\mathcal{F}$, which follows from the boundedness of the kernel $e^{-i \langle \xi, x \rangle}$ used to define the classical Fourier transform. However, this analogy breaks down in symmetric spaces because the (Poisson) kernel used to define the Helgason Fourier transform $\widetilde{f}$ is not bounded anymore. This necessitates a different approach, focusing on restriction inequalities of the form
\begin{align}\label{eqn_res}
\| \widetilde{f}(\lambda, \cdot) \|_{L^q(K)} \leq C \| f \|_{L^p(\X)}.
\end{align}
As observed by Ray and Sarkar in \cite{RS_09} (see also \cite{KRS_10}), due to the analytic continuation of the Fourier transform in symmetric spaces, the exponent $q$ in \eqref{eqn_res} depends on the imaginary part of $\lambda$, which is rooted in the \textit{Kunze-Stein phenomenon}. Consequently, the imaginary part of $\lambda$ in \eqref{eqn_res} also dictates the exponents for which the Hausdorff-Young inequality holds in $\X$ (see Theorem \ref{thm_RS_HYinq}). To utilize these inequalities, for a given $q_0 \in (1,\infty)$, we take $\Im \lambda = \rho_{q_0}$ in \eqref{eqn_res}, enabling us to assert that the following operator
\begin{align*}
\mathcal{T}_{q_0} f(\lambda) := \| \widetilde{f}(\lambda + i \rho_{q_0}, \cdot) \|_{L^{q_0}(K)}
\end{align*}
is of strong type $(1, \infty)$. We then use the Hausdorff-Young inequality and the Calderón estimate \cite{Cal} for sublinear operators to prove Theorem \ref{thm_Pitt_nr_intn}. Next, we investigate the relationship between the two hypotheses \eqref{uv_loc_nr_int1} and \eqref{uv_glo_nr_int1} in Theorem \ref{thm_Pitt_nr_intn}. Results of this type are related to Hardy's weighted inequalities and were also explored by Heinig \cite{He_84}. We observe that, using the monotonicity properties of $U$ and $V$ in some cases (depending on $p$ and $q$), \eqref{uv_loc_nr_int1} implies \eqref{uv_glo_nr_int1} (see Lemma \ref{1em_imply_new}), thereby generalizing Heinig's result \cite[Proposition 2.6]{He_84}. Furthermore, using the same result, Theorem \ref{thm_Pitt_r_int} will follow as a corollary of Theorem \ref{thm_Pitt_nr_intn}.

Proving the necessary condition for the weighted Fourier inequality \eqref{pitt_nrad_int} constitutes a major challenge. In the Euclidean setting, the authors in \cite{He_84, DLDS_17} used the fact that the classical Fourier transform $\mathcal{F}(f)$ for a radial function $f$ reduces to the Hankel transform, which integrates $f$ against a Bessel function. Since the zeros of Bessel functions are well-known, one can use this fact and appropriately choose a compactly supported function to prove the necessary condition \eqref{nec_org_RN} in $\R^{N}$.  On the other hand, in symmetric spaces of noncompact type, for radial functions, the Helgason Fourier transform $\widetilde{f}$ reduces to spherical transform $\what{f}$ defined by
\begin{align*}
    \what{f}(\lambda) = \int_{\X} f(x) \phi_{\lambda} (x^{-1})\, dx.
\end{align*}
While $\phi_{\lambda}$ can be expressed in terms of Bessel functions when $|x|$ is small (see \cite[Theorem 2.1]{Stanton_Tomas}), unlike in the Euclidean setting, this formula expresses $\phi_{\lambda}$ as a sum of several Bessel functions. Thus, it is not useful for analyzing its zeros. Consequently, the analogous argument may fail in this setting. To overcome this obstacle, we employ a different strategy. Instead of finding the zeros of $\phi_{\lambda}$, we utilize an integral representation \eqref{int_rep_phi} of $\phi_{\lambda}$ by Koornwinder \cite{Ko_75}. This formula enables us to obtain a lower bound for $|\widehat{f}(\lambda + i \rho_{q_0})|$ for any compactly supported non-negative radial function $f$ on $\X$ (see Lemma \ref{lem_f^geq}). This bound, in turn, leads to the necessary condition \eqref{nec_org}. Notably, the necessary condition does not require the assumption $p \leq q$.  This result reveals both similarities and differences between the behavior of spherical analysis and its Euclidean counterpart. Additionally, it highlights the analogous behavior of $\phi_{\lambda}$ and $e^{i\langle \xi, \cdot \rangle}$ despite their fundamental differences.

Observing the necessary condition \eqref{nec_org} and recalling that $\phi_{i \rho_{q_0}}$ for $q_0 \in [1,\infty]$ exhibits exponential decay towards infinity (see \eqref{est_phi_{rho_p}}), we have considered both polynomial and exponential weights on the function side. These exponential weights naturally arise in spaces with exponential growth and play a role analogous to $A_p$ weights in these settings (see \cite{OR_22, GRS_23}). In both the necessary and sufficient cases, sharp estimates of Harish-Chandra's elementary spherical function $\phi_{\lambda}$ and $c$-function are crucial. Finally, to prove Paley's inequality, we utilize Stein's analytic interpolation theorem, in contrast to Marcinkiewicz's interpolation in the Euclidean setting.\\

We conclude this section by providing an outline of this article. In the next section, we present the necessary background on harmonic analysis on semisimple Lie groups and symmetric spaces of noncompact type and recall some previously known results relevant to our study. In Section \ref{sec_Pitt_on_X}, we prove our results on weighted Fourier inequalities, namely Theorem \ref{thm_Pitt_r_int} and Theorem \ref{thm_Pitt_nr_intn}. Section \ref{sec_nec} delves into the necessary condition for the weighted Fourier inequalities to hold and proves Theorem \ref{thm_nec_int}. In Section \ref{sec_example}, we identify sufficient and necessary conditions for polynomial and exponential weights to ensure that weighted Fourier inequalities hold. Finally, in Section \ref{sec_paley}, we prove Theorem \ref{thm_paley}, thereby concluding our investigation.

\section{{Preliminaries}} \label{Preliminaries}
\subsection{Generalities}
The letters $\N$, $\Z$, $\R$, and $\C$ will respectively denote the set of all natural numbers, the ring of integers, and the fields of real and complex numbers. For $ z \in \C $, we use the notations $\Re z$ and $\Im z$ for real and imaginary parts of $z$, respectively. We shall follow the standard practice of using the letters $C$, $C_1$, $C_2$, etc., for positive constants, whose value may change from one line to another. Occasionally, the constants will be suffixed to show their dependencies on important parameters. Throughout this article, we use $X\lesssim Y$ or $Y\gtrsim X$ to denote the estimate $X\leq CY$ or $X\geq CY$ for some absolute constant $C>0$. We shall also use the notation $X\asymp Y$ for $X\lesssim Y$ and $Y\lesssim X$. For any Lebesgue exponent $ p \in [1,\infty]$, let $p^{\prime}$ denote the conjugate exponent such that $\frac{1}{p} + \frac{1}{p'}=1$, with obvious assumption $p^{\prime}=\infty$ when $p=1$ and vice-versa. 
   
   \subsection{Riemannian symmetric spaces} 
   
Here, we review some general facts and necessary preliminaries regarding semisimple Lie groups and harmonic analysis on Riemannian symmetric spaces of noncompact type. Most of this information is already known and can be found in \cite{GV, He_00, Helgason_GA}. Our notation is also standard, as seen in \cite{Stanton_Tomas, Io_02}. To make this article self-contained, we will gather only the results used throughout this paper. 
   
Let $ G $ be a noncompact connected semisimple real rank one Lie group with finite centre, with its Lie algebra $\mathfrak g$. Let $ \theta $ be a Cartan involution of $ \mathfrak{g} $ and $ \mathfrak{g} =\mathfrak{k+p} $ be the associated Cartan decomposition. Let $ K=\exp \mathfrak{k} $ be a maximal compact subgroup of $ G $ and let $ \X =G/K $ be an associated symmetric space with origin $ \textbf{0} =\{K\} $. 
Let $B$ denote the Cartan Killing form of $\mathfrak{g}$. It is known that $B|_{\mathfrak{p} \times \mathfrak{p}}$ is positive definite and hence induces an inner product and a norm $|\cdot|$ on $\mathfrak{p}$. The homogeneous space $\X=G/K$ is a smooth manifold. The tangent space of $\X$ at the point $o =eK$ can be naturally identified to $\mathfrak{p}$ and the restriction of $B$ on $\mathfrak{p}$ then induces a $G$-invariant Riemannian metric $d$ on $\X$. For $x\in \X$, we denote $|x|$ by \begin{equation*}
  |x|=d(x, eK).  
\end{equation*}
Let $ \mathfrak{a} $ be a maximal abelian subspace of $ \mathfrak{p} $. Since the group $ G $ is of real rank one,  dim $\mathfrak{a} =1 $.  Let $ \Sigma $ be the set of nonzero roots of the pair $ (\mathfrak{g,a}) $, and let $ W $ be the associated Weyl group. For rank one case, it is well known that either $ \Sigma =\{-\alpha,\alpha\} $ or $ \{-2\alpha,-\alpha,\alpha,2\alpha \} $, where $ \alpha $ is a positive root and the Weyl group $ W $ associated to $ \Sigma  $ is  \{-Id, Id\}, where Id is the identity operator. Let $ \mathfrak{a}^+ =\{ H \in \mathfrak{a} : \alpha(H)>0 \} $ be a positive Weyl chamber, and let $ \Sigma^+ $ be the corresponding set of positive roots.  In our case, $ \Sigma ^+ = \{\alpha\}$ or $ \{ \alpha, 2\alpha\} $. For any root $ \beta \in \Sigma  $, let $ \mathfrak{g}_\beta $ be the root space associated with $ \beta $. We let 
			\begin{align*}
			\mathfrak{n} =\sum_{\beta \in \Sigma^+} \mathfrak{g}_\beta, \qquad \text{and} \qquad  N =\exp  \mathfrak{n}.
		 \end{align*}
The group $ G $ has an Iwasawa decomposition \bes G= K (\exp \mathfrak{a})N,
			\ees and a Cartan decomposition 
			\bes G=K(\exp \mathfrak{a}^+) K.\ees This decomposition is unique. For each $ g \in G $, we denote $ H(g) \in \mathfrak{a} $
			and $ g^+ \in \ol {\mf{a}^+} $ are the unique elements such that
			\begin{align*}
			g=k \exp H(g) v, \qquad k\in K, v\in N,
			\end{align*} and 
\begin{align*}
g=k_1 \exp (g^+)k_2 \qquad k_1, k_2\in K.
\end{align*}	

Let  $H_0$ be the unique element in $\mathfrak a$ such that $\alpha(H_0)=1$ and through this we identify $\mathfrak a$ with $\R$ as $t\leftrightarrow tH_0$ and $\mathfrak a_+= \{H\in \mathfrak a\mid \alpha(H)>0\}$ is identified with the set of positive real numbers.   We also identify $\mathfrak a^*$ and its complexification $\mathfrak a^*_\C$ with $\R$  and $\C$ respectively by $t\leftrightarrow t\alpha$ and  $z\leftrightarrow z\alpha$, $t\in \R$, $z\in \C$. Let $A=\exp\mathfrak{a}=\left\{a_t:=\exp(t H_0)\mid t\in\R\right\}$ and $A^+=\left\{a_t\mid t>0\right\}$.
 Let $m_1=\dim \mathfrak g_\alpha$ and $m_{2}=\dim \mathfrak g_{2\alpha}$ where $\mathfrak g_\alpha$ and
 $\mathfrak g_{2\alpha}$ are the root spaces corresponding to $\alpha$ and $2\alpha$. We will also denote $n= m_1+m_2+1$ as the dimension of the symmetric space $\X$. As usual, let $\rho=\frac 12(m_1+2m_2)\alpha$ denote the half sum of the positive roots. By abuse of notation we will denote $\rho(H_0)=\frac 12(m_1+2m_2)$ by $\rho$. For  $p\in[1,\infty]$, we define 
\begin{equation*}
 \rho_p=\left({2}/{p}-1\right)\rho \quad\text{ and } \quad S_{p}=\{z\in\mathbb{C}:|\Im z|\leq|\rho_{p}|\}.   
\end{equation*}

Let $ dg, dk,$ and $  dv $ be the Haar measures on the groups $ G, K,$ and $ N$, respectively. We  normalize $ dk $ such that $ \int_K dk =1 $.  We have the following integral formulae corresponding to the Cartan decomposition respectively, which holds for any integrable function $f$:
 \begin{equation}\label{car_dec}
\int_Gf(g)dg=\int_K\int_{\R^+}\int_K f(k_1a_tk_2) \Delta(t)\,dk_1\,dt\,dk_2. 
\end{equation} 
where $\Delta(t)=(2\sinh t)^{m_1 + m_2}(2\cosh t)^{m_2}$.

A function $f$ on $G$ is said to be right $K$-invariant if $f(gk)=f(g)$ for all $g\in G, k\in K$. We identify a function on $\X=G/K$ as a right $K$-invariant function on $G$. When we refer to integration of a function $f$ on $\X$, we mean the integration of the function $f$ on $G$ as a
right $K$-invariant function.

\subsection{Fourier transform}
For a sufficiently nice function $f$ on $\X$, its {\em Helgason Fourier transform} $\widetilde{f}$ is a function defined on $\C \times K$ given by 
\be \label{defn:hft}
\widetilde{f}(\lambda,k) = \int_{\X} f(x) e^{(i\lambda- \rho)H(x^{-1}k)} \,dx,\:\:\:\:\:\: \lambda \in \C,\:\: k \in K, 
\ee
whenever the integral exists (\cite[p. 199]{Helgason_GA}). 

It is known that if $f\in L^1(\X)$ then $\widetilde{f}(\lambda, k)$ is a continuous function of $\lambda \in \R$, for almost every $k\in K$. If in addition $\widetilde{f}\in L^1(\R\times K, |c(\lambda)|^{-2}~d\lambda~dk)$ then the following Fourier inversion holds,
\be\label{hft}
f(gK)= |W|^{-1}\int_{\R} \int_K \widetilde{f}(\lambda, k)~e^{-(i\lambda+\rho)H(g^{-1}k)} ~ |c(\lambda)|^{-2}~dk\,d\lambda,
\ee
for almost every $gK\in \X$ (\cite[Chapter III, Theorem 1.8, Theorem 1.9]{Helgason_GA}), where $c(\lambda)$ is the Harish-Chandra's $c$-function given by \bes c(\lambda)=\frac{2^{\rho-i\lambda} \Gamma(\frac{m_1 + m_2 +1}{2})\Gamma(i\lambda)}{\Gamma(\frac{\rho+ i\lambda}{2}) \Gamma(\frac{m_1+ 2}{4} + \frac{i\lambda}{2})}.\ees
It is normalized such that $c(-i\rho)=1$. Moreover, $f \mapsto \widetilde{f}$ extends to an isometry from  $L^2(\X)$ onto $L^2(\R\times K, |c(\lambda)|^{-2}~d\lambda~dk )$ (\cite[Chapter III, Theorem 1.5]{Helgason_GA}):
\begin{align*}
    \int_{\X} |f(x)|^2 dx = |W|^{-1} \int_{\R} \int_{K}  |\widetilde{f} (\lambda, k) |^2 |c(\lambda)|^{-2}\,  dk\,d\lambda.
\end{align*}
A function $f$ is called $K$-biinvariant or radial if  for all $g\in G$,   $k_1, k_2\in K$ \bes f(k_1gk_2)=f(g) .
\ees  We denote the set of all $K$-biinvariant functions by $\mathcal F (G//K)$. By employing the Cartan decomposition, we can extend a function $f$ initially defined on $\overline{A^+}$ to a $K$-biinvariant function on $G$ by
\begin{equation}\label{A+_to_Kbi}
  	 f( k_1 a_t k_2 ) = f(a_t),
    \end{equation}
     \text{ for all } $k_1 , k_2 \in K, t\geq 0$. 
     
Let $\mathbb D(G/K)$ be the algebra of $G$-invariant differential operators on $\X$. The elementary spherical functions $\phi$ are $C^\infty$ functions and are joint eigenfunctions of all $D\in\mathbb D(G/K)$ for some complex eigenvalue $\lambda(D)$. That is $$D\phi=\lambda(D)\phi, D\in\mathbb D(G/K).$$They are parametrized by $\lambda\in\C$. The algebra $\mathbb D(G/K)$ is generated by the Laplace-Beltrami operator $ L$. Then we have, for all $\lambda\in\C, \phi_\lambda$ is a $C^\infty$ solution of  
\begin{equation}\label{phi-lambda}
L\phi=-(\lambda^2 + \rho^2)\phi.
\end{equation}
			For any $\lambda\in \C$  the elementary spherical function $\phi_\lambda$ has the following integral representation
\begin{equation}\label{int_rep}
    \phi_\lambda(x)=\int_K e^{-(i\lambda+\rho)H(xk)}\,dk \text{ for all } x\in G.
\end{equation}
The spherical transform $\what{f}$ of a  suitable $K$-biinvariant function $f$ is defined by the formula:
$$\what{f}(\lambda)=\int_{\X} f(x)\phi_\lambda(x^{-1})\,dx.$$
It is easy to check that for suitable $K$-biinvariant function $f$ on $G$; its Helgason Fourier transform $\widetilde{f}$ reduces to the spherical transform $\what{f}$.

We now list down some well-known properties of the elementary spherical functions that are important for us (\cite[Prop 3.1.4 and Chapter 4, \S 4.6]{GV}, \cite[Lemma 1.18, p. 221]{Helgason_GA}).

\begin{enumerate}
\item $\phi_\lambda(g)$ is $K$-biinvariant in $g\in G$,  $\phi_\lambda=\phi_{-\lambda}$, $\phi_\lambda(g)=\phi_\lambda(g^{-1})$.
\item $\phi_\lambda(g)$ is $C^\infty$ in $g\in G$ and holomorphic in $\lambda\in\C$.
\item$|\phi_\lambda(x)|\leq 1$ for all $x\in G$ if and only if $\lambda\in S_1$.
\item For all $\lambda\in \C$, we have
\begin{align}\label{phi_il}
|\phi_\lambda(g)| \leq  \phi_{i \Im \lambda}(g).
\end{align}
\item For $ \lambda \in \C  $ with $ \Im \lambda <0 $,  from the asymptotic estimate of $\phi_\lambda$ \cite{HC_58}, it follows that (see \cite[(2.6)]{PRS_11})
\begin{equation}\label{est_phi_lam}
|\phi_{\lambda}(a_t)| \asymp e^{-(\Im \lambda +\rho)|t|} . 					
\end{equation}
Particularly, taking $ \lambda = i \rho_p$, we have 
\begin{align}\label{est_phi_{rho_p}}
    \phi_{i\rho_p} (a_t) = \begin{cases}
        e^{-\frac{2\rho}{p'}|t|} \qquad &\text{if } 1\leq p<2,\\
        e^{-\frac{2\rho}{p}|t|} \qquad &\text{if } p>2.
    \end{cases}
\end{align}
\item We also have the following inequality for $p=2$ case, see \cite[(3.5)]{ADY_96}
\begin{align}\label{est_phi_0}
\phi_0(a_t) \asymp \left(1+t\right)e^{-\rho t}, \:\: t\geq 0.
\end{align}
\end{enumerate}		
We also recall the following formula of spherical function from  \cite[Lemma 2.2]{Stanton_Tomas}, see also \cite[(2.16)]{Ko_75}.
 \begin{lemma}\label{int_exp_phi}
     We have  for all $\lambda \in \C $, 
     \begin{equation}\label{int_rep_phi}
       \begin{aligned}
      &{(c_0 \sinh 2t)}^{-1}{\Delta(t)} \phi_{\lambda}(a_t) \\
      & = \int_0^t (\cosh 2t- \cosh 2s)^{\frac{m_2}{2}-1} \left(\int_0^s (\cosh s -\cosh r)^{\frac{m_1}{2}-1} \cos(\lambda r) \,dr\right) \sinh s\, ds,
   \end{aligned}
   \end{equation}where $c_0$ is a constant given by \begin{equation*}
       c_0=\pi^{1/2} 2^{m_2/2-2} \frac{\Gamma(\frac{n-1}{2})}{\Gamma(\frac{n}{2})}.
   \end{equation*}
 \end{lemma}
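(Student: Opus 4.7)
The identity is Koornwinder's iterated-integral representation of the elementary spherical function on a rank-one symmetric space, reflecting the factorization of the spherical Abel transform into two Weyl-type fractional integrals corresponding to the two root multiplicities $m_1$ and $m_2$. My plan is to exploit this factorization structure. The starting point is the identification of $\phi_\lambda(a_t)$ with the Jacobi function $\phi_\lambda^{(\alpha,\beta)}(t)$ of parameters $\alpha = (m_1+m_2-1)/2$ and $\beta = (m_2-1)/2$; equivalently,
\begin{equation*}
\phi_\lambda(a_t) = {}_2F_1\!\left(\tfrac{\rho+i\lambda}{2},\tfrac{\rho-i\lambda}{2};\tfrac{m_1+m_2+1}{2};-\sinh^2 t\right),
\end{equation*}
which comes from the Harish-Chandra integral \eqref{int_rep} together with the Iwasawa-coordinate computation. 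Note that $\alpha+\beta+1 = \tfrac{1}{2}(m_1+2m_2) = \rho$, so the Jacobi/spherical parameters match.

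Next, I would treat the two integrations in the claimed identity separately. The inner integral
\begin{equation*}
G_\lambda(s) := \int_0^s (\cosh s - \cosh r)^{m_1/2 - 1}\cos(\lambda r)\, dr
\end{equation*}
is computed by Euler's integral representation of the Gauss hypergeometric function after the substitution matching $\cosh r$ with the Euler variable; it evaluates (up to an explicit gamma factor) to a ``sub-Jacobi'' function associated with the pair carrying only the root $\alpha$ of multiplicity $m_1$. The outer integral
\begin{equation*}
\int_0^t (\cosh 2t - \cosh 2s)^{m_2/2 - 1}\, G_\lambda(s)\, \sinh s\, ds
\end{equation*}
is then a Weyl-type fractional integral of order $m_2/2$; by the composition rule for Euler integrals together with a quadratic transformation of ${}_2F_1$, it upgrades $G_\lambda(s)$ to the full $\phi_\lambda^{(\alpha,\beta)}(t)$, multiplied by $\Delta(t)/\sinh(2t)$. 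The constant $c_0$ is pinned down by two Beta-function evaluations — once inside each integral — using $B(m_1/2, m_2/2) = \Gamma(m_1/2)\Gamma(m_2/2)/\Gamma((m_1+m_2)/2)$ and Legendre's duplication formula, reproducing the stated value $c_0 = \pi^{1/2}2^{m_2/2 - 2}\Gamma((n-1)/2)/\Gamma(n/2)$.

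The main obstacle is the composition step: one must correctly combine the two ${}_2F_1$ expressions, via Erd\'elyi's fractional-integration identity and a quadratic transformation of the hypergeometric function, while tracking gamma-factor bookkeeping carefully. A secondary technicality concerns small root multiplicities: when $m_1 = 1$ the inner kernel has an integrable $(-1/2)$-type singularity at $r=s$ that is handled in the Lebesgue sense, and when $m_2 = 0$ (the case $G = \mathrm{SO}_0(n,1)$) the outer factor $(\cosh 2t - \cosh 2s)^{m_2/2 - 1}$ becomes non-integrable; this degenerate case must be treated separately or as an analytic continuation in $m_2$, with the identity collapsing to its inner piece alone.
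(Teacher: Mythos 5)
The paper itself offers no proof of this lemma: it is quoted directly from Stanton--Tomas (Lemma 2.2) and Koornwinder (formula (2.16)), so the only meaningful comparison is with the derivation in those sources. Your outline reproduces its structure correctly. The identification $\phi_\lambda(a_t)={}_2F_1\big(\frac{\rho+i\lambda}{2},\frac{\rho-i\lambda}{2};\frac{n}{2};-\sinh^2 t\big)$ with the Jacobi function of parameters $\alpha=\frac{m_1+m_2-1}{2}$, $\beta=\frac{m_2-1}{2}$ is right (note $\alpha+1=\frac{n}{2}$ and $\alpha+\beta+1=\rho$); the inner integral is indeed the Mehler--Dirichlet representation of the ``sub-spherical'' function with parameters $(\frac{m_1-1}{2},-\frac12)$; and the outer integral is the transmutation operator of order $\frac{m_2}{2}=\beta+\frac12$ acting in the variable $\cosh 2s$, which raises the parameters from $(\frac{m_1-1}{2},-\frac12)$ to $(\alpha,\beta)$. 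Your observation that a quadratic transformation of ${}_2F_1$ must enter, because the outer kernel is a function of $\cosh 2s$ while the measure is $d(\cosh s)$, is exactly the correct technical point.

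As a proof, however, the proposal stops precisely where the work begins: the composition of the two Euler-type integrals, with the quadratic transformation and the gamma-factor bookkeeping, \emph{is} the content of the lemma, and you only describe it. Until that computation (or an appeal to Koornwinder's fractional-integral calculus) is actually carried out, this is a roadmap rather than a proof. Two further points. First, the constant $c_0$ is pinned down most cleanly by matching leading $t\to 0^+$ asymptotics of the two sides (or by taking $\lambda=-i\rho$, where $\phi_{-i\rho}\equiv 1$), and one must be careful that the value of $c_0$ depends on the normalization $\Delta(t)=(2\sinh t)^{m_1+m_2}(2\cosh t)^{m_2}$ adopted here; a loose ``two Beta evaluations'' will not by itself certify the stated value. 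Second, your caveat about $m_2=0$ is genuine and worth recording: for real hyperbolic spaces the exponent $\frac{m_2}{2}-1=-1$ makes the outer integral divergent, so the identity as literally stated requires $m_2>0$ (the $m_2=0$ case degenerates to the single Mehler--Dirichlet integral), a restriction the paper does not mention.
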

The authors Ray and Sarkar \cite{RS_09} established the following Hausdorff-Young inequality. We note that they proved the result for $n\geq 3$. However, the result is also valid for the case when $n=2$.
\begin{theorem}[{{\cite[Theorem 4.6]{RS_09}}}]\label{thm_RS_HYinq}
    For $1\leq p\leq 2$ and $p\leq q\leq p'$,
    \begin{align}\label{HY_ineq}
        \left( \int_{\R} \left( \int_K \left| \widetilde {f}(\lambda +i\rho_q, k)  \right|^q \, dk \right)^{\frac{p'}{q}} |c(\lambda)|^{-2} d\lambda \right) ^{\frac{1}{p'}} \leq C_{p,q} \|f\|_{L^p(\X)}.
    \end{align}
    The case  $p=q=2$ is a weakening of the Plancherel theorem. We also note that
when $q=p' $, the result best resembles the classical Hausdorff-Young inequality at
the lower boundary of the strip $S_p$.
\end{theorem}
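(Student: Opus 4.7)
My plan is to derive \eqref{paley's} by combining Theorem~\ref{thm_RS_HYinq} at the diagonal endpoint $q=p$ with a single H\"older step in the spectral variable $\lambda$. At the structural level this is a Stein-interpolation argument, since Theorem~\ref{thm_RS_HYinq} is itself obtained by Stein's analytic interpolation between Plancherel at $p=2$ and the $p=1$ estimate coming from $|\widetilde{f}(\lambda+i\rho,k)| \leq \int_\X |f(x)|\,e^{-2\rho H(x^{-1}k)}\,dx$, as indicated in the introduction.

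Write $A(\lambda) := \int_K |\widetilde{f}(\lambda+i\rho_p,k)|^p\,dk$, so that the $p$-th power of the left-hand side of \eqref{paley's} is $\int_\R A(\lambda)\,u(\lambda)^{2-p}\,|c(\lambda)|^{-2}\,d\lambda$. I would apply H\"older's inequality with the conjugate exponents $r=p'/p$ and $r'=1/(2-p)$, which are genuinely conjugate since $p/p'+(2-p)=(p-1)+(2-p)=1$. This yields
\[ \int_\R A(\lambda)\,u(\lambda)^{2-p}\,|c(\lambda)|^{-2}\,d\lambda \leq \left(\int_\R A(\lambda)^{p'/p}\,|c(\lambda)|^{-2}\,d\lambda\right)^{p/p'}\left(\int_\R u(\lambda)\,|c(\lambda)|^{-2}\,d\lambda\right)^{2-p}. \]
The second factor is exactly $\|u\|_{L^1(|c|^{-2})}^{2-p}$. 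For the first factor, Theorem~\ref{thm_RS_HYinq} applied with $q=p$ (admissible since $p\leq p\leq p'$ for $1\leq p\leq 2$) gives $\left(\int_\R A^{p'/p}\,|c|^{-2}\right)^{1/p'} \leq C_{p,p}\|f\|_{L^p(\X)}$, hence $\left(\int_\R A^{p'/p}\,|c|^{-2}\right)^{p/p'} \leq C_{p,p}^p\,\|f\|_{L^p(\X)}^p$. Substituting and taking the $p$-th root produces \eqref{paley's} with constant $C_p = C_{p,p}$ and the exact exponent $(2-p)/p=2/p-1$ on $\|u\|_{L^1(|c|^{-2})}$. The boundary cases $p=1$ and $p=2$ are read off separately: at $p=2$ both the H\"older and Hausdorff--Young steps degenerate to Plancherel, and at $p=1$ the H\"older pair becomes $(\infty,1)$ and the argument reduces to the Poisson-kernel bound $\int_K|\widetilde{f}(\lambda+i\rho,k)|\,dk\leq \|f\|_{L^1(\X)}$.

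The only step that carries real content is the Hausdorff--Young input from Theorem~\ref{thm_RS_HYinq}; beyond that the proof is bookkeeping of exponents. A more self-contained alternative runs Stein's analytic interpolation directly on the family $T_z f(\lambda,k) := \widetilde{f}(\lambda+iz\rho,k)\,u(\lambda)^z$ on the strip $\{0\leq\Re z\leq 1\}$, with $\|T_0 f\|_{L^2(|c|^{-2})}=\|f\|_{L^2(\X)}$ (Plancherel) at $z=0$ and $\|T_1 f\|_{L^1(|c|^{-2})}\leq \|u\|_{L^1(|c|^{-2})}\|f\|_{L^1(\X)}$ (via $\int_K e^{-2\rho H(x^{-1}k)}\,dk=1$) at $z=1$; Stein then delivers the intermediate bound at $z=\theta=(2-p)/p$, where $\theta\rho=\rho_p$ and $u^\theta=u^{(2-p)/p}$. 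The delicate point in this direct route is the vertical-line $L^2$ estimate at $\Re z=0$: after changing variables the norm becomes $\int|\widetilde{f}(\mu,k)|^2|c(\mu+t\rho)|^{-2}\,d\mu\,dk$, and the vanishing of $|c(\mu)|^{-2}\asymp\mu^2$ near $\mu=0$ (against the nonvanishing $|c(\mu+t\rho)|^{-2}$) obstructs a pointwise comparison; a splitting into low- and high-frequency regions combined with smoothness of $\widetilde{f}$ on a dense class is needed to obtain admissible growth in $|t|$, a technicality the H\"older route avoids entirely.
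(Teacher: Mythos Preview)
Your proposal does not address the stated Theorem~\ref{thm_RS_HYinq} (the Hausdorff--Young inequality), which in this paper is not proved but merely quoted from \cite{RS_09}. What you actually prove is Theorem~\ref{thm_paley} (Paley's inequality, \eqref{paley's}), and you do so correctly: the H\"older step with conjugate exponents $p'/p$ and $1/(2-p)$ against the Plancherel measure, followed by the diagonal case $q=p$ of Theorem~\ref{thm_RS_HYinq}, yields precisely \eqref{paley's} with the stated exponent $2/p-1$ on $\|u\|_{L^1(|c|^{-2})}$.

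Compared with the paper's own proof of Theorem~\ref{thm_paley} in Section~\ref{sec_paley}, your route is genuinely different and more efficient. The paper runs Stein's analytic interpolation directly on the family
\[
T_z f(\lambda,k) = \frac{\widetilde{f}(\lambda+z,k)\,(\lambda+z)}{u(\lambda)\,(1+|\lambda|)}
\]
between an $L^1\to L^1$ bound at $\Im z=\rho$ (via the restriction estimate of Theorem~\ref{RS_4.2}) and an $L^2\to L^2$ bound at $\Im z=0$ (via Plancherel after a real translation in $\lambda$). The auxiliary factors $(\lambda+z)$ and $(1+|\lambda|)^{-1}$, together with the reference measure $\lambda^2(1+|\lambda|)^{n-3}u(\lambda)^2\,d\lambda\,dk$, are exactly the device that resolves the vertical-line difficulty you flag in your final paragraph: they absorb the $\mu^2$ vanishing of $|c(\mu)|^{-2}$ near the origin. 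That fix, however, forces the pointwise inequality $(1+|\lambda-\xi|)^{n-3}\leq C(1+|\xi|)^{n-3}(1+|\lambda|)^{n-3}$, which is the sole reason the paper's statement carries the hypothesis $n\geq 3$. Your H\"older argument uses Theorem~\ref{thm_RS_HYinq} as a black box and never touches that estimate, so it delivers \eqref{paley's} without any dimensional restriction (the paper itself notes that Theorem~\ref{thm_RS_HYinq} is valid for $n=2$ as well). In short: the paper's approach is self-contained Stein interpolation at the cost of $n\geq 3$; yours trades that for an appeal to the already-established Hausdorff--Young inequality and is both shorter and sharper in its range of validity.
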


\begin{theorem}[{{\cite[Theorem 4.2]{RS_09}}}]\label{RS_4.2}
    Let $1\leq p<2$. Then for $p<q<p'$ and for $\lambda \in \C$ with $\Im  \lambda  = \rho_q$, we have 
\begin{align*}
    \left( \int_{K} |\widetilde{f}(\lambda, k)|^q dk\right)^{\frac{1}{q}} \leq C_{p,q} \|f\|_{L^p(\X)},
\end{align*}
for all $f \in L^p(\X)$. Moreover, when $p=1$, then $q \in [1,\infty] $ and $C_{p,q}=1$.
\end{theorem}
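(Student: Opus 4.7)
The plan is to prove the restriction estimate in two stages: first the endpoint $p=1$ by a direct Minkowski argument, and then the range $1 < p < 2$ via Stein's analytic interpolation on a holomorphic family of operators, with the non-trivial endpoint supplied by the Kunze--Stein phenomenon.

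For the $p = 1$ case, apply Minkowski's integral inequality to the definition \eqref{defn:hft}:
\[
\bigl\|\widetilde f(\lambda,\cdot)\bigr\|_{L^q(K)} \leq \int_{\X} |f(x)| \Bigl( \int_K \bigl|e^{(i\lambda-\rho)H(x^{-1}k)}\bigr|^q\,dk \Bigr)^{1/q} dx.
\]
Since $|e^{(i\lambda-\rho)H(\cdot)}| = e^{-(\Im\lambda+\rho)H(\cdot)}$ and the hypothesis $\Im\lambda = \rho_q = (2/q-1)\rho$ forces $q(\Im\lambda+\rho) = 2\rho$, the inner integral equals $\int_K e^{-2\rho H(x^{-1}k)}\,dk = \phi_{-i\rho}(x^{-1})$ by the integral representation \eqref{int_rep}. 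The normalization $c(-i\rho) = 1$ together with $\phi_\lambda = \phi_{-\lambda}$ gives $\phi_{-i\rho} \equiv 1$, so $\|\widetilde f(\lambda,\cdot)\|_{L^q(K)} \leq \|f\|_{L^1(\X)}$ with constant $C_{1,q} = 1$ for every $q \in [1,\infty]$. This already settles the last sentence of the theorem.

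For $1 < p < 2$, fix $\xi = \Re\lambda$ and introduce the analytic family
\[
T_z f(k) := \int_{\X} f(x)\, e^{(i\xi - z\rho)H(x^{-1}k)}\,dx,\qquad z \in \Omega := \{0 \leq \Re z \leq 1\},
\]
so that $T_z f(k) = \widetilde f(\xi + i(z-1)\rho, k)$. For $f \in C_c^\infty(\X)$ the map $z \mapsto T_z f(k)$ is entire with admissible growth on vertical lines, so Stein's analytic interpolation theorem applies once the two boundary bounds are in place. On the line $\Re z = 0$ the computation of the first step, applied with $q = \infty$, gives the uniform bound $\|T_z f\|_{L^\infty(K)} \leq \|f\|_{L^1(\X)}$. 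On the line $\Re z = 1$ (which corresponds to $\Im\lambda = 0$) one needs an endpoint of $L^{p_1}(\X) \to L^{q_1}(K)$ type; the cleanest way is a $T_\lambda^\ast T_\lambda$ computation, after which $T_\lambda^\ast T_\lambda$ becomes a convolution on $\X$ whose kernel is a principal-series matrix coefficient, and Cowling's form of the Kunze--Stein phenomenon for rank-one semisimple groups furnishes the required $L^{p_1} \to L^{p_1'}$ estimate, hence a $T_\lambda : L^{p_1} \to L^{p_1'}(K)$ bound. Stein interpolation between these two endpoints then gives the claimed $L^p(\X) \to L^q(K)$ estimate, and tracking the interpolation parameter $\theta$ yields precisely $\Im\lambda = (\theta-1)\rho = \rho_q$ with the right Riesz--Thorin relation between $p$ and $q$, covering the open range $1 < p < 2,\ p < q < p'$.

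The main obstacle is the endpoint at $\Re z = 1$: Plancherel only gives an $L^2$ bound for $\widetilde f$ after integration against $|c(\lambda)|^{-2}\,d\lambda$, not a pointwise-in-$\lambda$ estimate, so one cannot simply invoke the Plancherel theorem as the second endpoint. Overcoming this requires a genuinely non-Euclidean ingredient, namely the Kunze--Stein convolution theorem for rank-one semisimple Lie groups, which exploits the exponential volume growth. Once this endpoint is in place, the admissible growth of $T_z$ needed for Stein interpolation follows routinely from Phragm\'en--Lindel\"of applied to $\lambda \mapsto \widetilde f(\lambda,k)$ together with the sharp estimate \eqref{est_phi_lam} in the strip $S_p$, and one extends from $f \in C_c^\infty(\X)$ to general $f \in L^p(\X)$ by density. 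The strict inequalities $p < q < p'$ in the statement are necessary because the Kunze--Stein endpoint degenerates at the boundary of the strip $S_p$.
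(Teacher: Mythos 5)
The paper does not actually prove this statement; it is imported verbatim from \cite[Theorem 4.2]{RS_09}, so there is no internal proof to compare against. Your $p=1$ step is correct and complete: Minkowski's integral inequality plus the identity $\int_K e^{-q(\Im\lambda+\rho)H(x^{-1}k)}\,dk=\int_K e^{-2\rho H(x^{-1}k)}\,dk=\phi_{i\rho}(x^{-1})\equiv 1$ when $\Im\lambda=\rho_q$ gives the bound with constant $1$ (note that $\phi_{i\rho}\equiv 1$ follows directly from the integral representation \eqref{int_rep}, not from the normalization of $c$). Your overall strategy for $1<p<2$ --- Stein interpolation between the trivial boundary estimate and a $T^{*}T$/Kunze--Stein estimate at real $\lambda$ --- is the standard route and matches the philosophy of the cited proof, which the present paper itself describes as ``rooted in the Kunze--Stein phenomenon.''

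There is, however, a genuine error in the exponent bookkeeping at the $\Re z=1$ endpoint. For real $\lambda$ the computation gives $T_\lambda^{*}T_\lambda f=f*\phi_\lambda$, and if $f\mapsto f*\phi_\lambda$ is bounded from $L^{p_1}$ to $L^{p_1'}$, the conclusion is $\|T_\lambda f\|_{L^2(K)}^2=\langle f*\phi_\lambda,f\rangle\lesssim\|f\|_{p_1}^2$, i.e.\ $T_\lambda\colon L^{p_1}(\X)\to L^{2}(K)$ --- not $T_\lambda\colon L^{p_1}\to L^{p_1'}(K)$ as you assert. This is not cosmetic: interpolating $L^1\to L^\infty(K)$ at $\Im\lambda=-\rho$ against $L^{p_1}\to L^{p_1'}(K)$ at $\Im\lambda=0$ forces $1/p+1/q=1$ at every interpolation parameter, and the additional constraint $\Im\lambda=\rho_q$ then forces $p_1=2$, so your stated endpoint can never reach the range $p<q<p'$. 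With the correct endpoint $L^{p_1}\to L^{2}(K)$ the bookkeeping does close ($\theta=2/q$, $p_1'=2p'/q$), but only for $2\le q<p'$: your strip $0\le\Re z\le 1$ corresponds to $\Im\lambda\in[-\rho,0]$, and the remaining range $p<q<2$ needs the mirror strip $\Im\lambda\in[0,\rho]$ with the $L^1\to L^1(K)$ boundary bound. Finally, the uniform-in-$\lambda$ estimate $\|f*\phi_\lambda\|_{p_1'}\lesssim\|f\|_{p_1}$ for $1<p_1<2$ is the genuine analytic content of the whole argument and is invoked as a black box; it is true (via $|\phi_\lambda|\le\phi_0$ it reduces to the corresponding bound for convolution with $\phi_0$, a known consequence of the Kunze--Stein/Herz majorization circle of ideas), but it is precisely the step that must be proved or cited with a precise reference for the argument to be complete.
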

Let us recall the following estimate \cite[Lemma 4.8]{RS_09} of $|c(\lambda)|^{-2}$, which can be obtained from \cite[Lemma 4.2]{Stanton_Tomas} the explicit expression of $|c(\lambda)|^{-2}$.  
\begin{lemma}\label{est_c-2}
    We have 
    \begin{align}\label{sharp_c-2}
        |c(\lambda)|^{-2} \asymp \lambda^2 (1+|\lambda|)^{n-3}, \quad \text{for } \lambda \in \R.
    \end{align}
\end{lemma}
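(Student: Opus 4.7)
The plan is to deduce the estimate directly from the explicit formula for $c(\lambda)$ recorded in the preliminaries. For $\lambda\in\R$ the factor $2^{\rho-i\lambda}$ has modulus $2^{\rho}$ and $\Gamma(n/2)$ is a positive constant, so
\[
|c(\lambda)|^{-2} = \frac{\bigl|\Gamma\!\bigl(\tfrac{\rho+i\lambda}{2}\bigr)\bigr|^2\,\bigl|\Gamma\!\bigl(\tfrac{m_1+2}{4}+\tfrac{i\lambda}{2}\bigr)\bigr|^2}{2^{2\rho}\,\Gamma(n/2)^2\,|\Gamma(i\lambda)|^2}.
\]
The only non-analytic behavior on $\R$ comes from $\Gamma(i\lambda)$, which I would handle via the reflection identity
\[
|\Gamma(i\lambda)|^2 \;=\; \Gamma(i\lambda)\Gamma(-i\lambda) \;=\; \frac{\pi}{\lambda\sinh(\pi\lambda)}.
\]
This single factor is responsible both for the $\lambda^2$ vanishing at the origin (since $\lambda\sinh(\pi\lambda)\asymp \pi\lambda^{2}$ near $0$) and for an exponential $e^{\pi|\lambda|}$ growth at infinity that must be cancelled by the numerator.

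For the local estimate, note that both numerator Gamma factors are evaluated at points with fixed positive real parts $\rho/2$ and $(m_1+2)/4$, so they are smooth and uniformly bounded above and below on $|\lambda|\le 1$. Combined with $|\Gamma(i\lambda)|^{-2}=\lambda\sinh(\pi\lambda)/\pi\asymp \lambda^{2}$ near the origin, this already yields $|c(\lambda)|^{-2}\asymp \lambda^{2}$ on $|\lambda|\le 1$, which agrees with $\lambda^{2}(1+|\lambda|)^{n-3}$ there.

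For $|\lambda|\ge 1$, I would invoke the classical Stirling asymptotic $|\Gamma(a+iy)|^{2}\asymp |y|^{2a-1}e^{-\pi|y|}$ (as $|y|\to\infty$, for any fixed real $a>0$) applied to each Gamma factor. With $y=\lambda/2$ this gives
\[
\bigl|\Gamma\!\bigl(\tfrac{\rho+i\lambda}{2}\bigr)\bigr|^{2}\asymp |\lambda|^{\rho-1}e^{-\pi|\lambda|/2},\qquad \bigl|\Gamma\!\bigl(\tfrac{m_1+2}{4}+\tfrac{i\lambda}{2}\bigr)\bigr|^{2}\asymp |\lambda|^{m_1/2}e^{-\pi|\lambda|/2},
\]
and from the reflection identity $|\Gamma(i\lambda)|^{-2}\asymp |\lambda|\,e^{\pi|\lambda|}$. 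The three exponentials cancel exactly, $e^{-\pi|\lambda|/2}\cdot e^{-\pi|\lambda|/2}\cdot e^{\pi|\lambda|}=1$, leaving a pure polynomial power whose exponent is
\[
(\rho-1)+\tfrac{m_{1}}{2}+1 \;=\; \rho+\tfrac{m_{1}}{2} \;=\; m_{1}+m_{2} \;=\; n-1,
\]
after using $\rho=(m_{1}+2m_{2})/2$ and $n=m_{1}+m_{2}+1$. Hence $|c(\lambda)|^{-2}\asymp |\lambda|^{n-1}\asymp \lambda^{2}(1+|\lambda|)^{n-3}$ on $|\lambda|\ge 1$, and splicing the two regimes together proves the lemma. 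I expect the only genuine obstacle to be making the Stirling bounds uniform two-sided on $|\lambda|\ge 1$ (so that $\asymp$, rather than just $\sim$ in the limit, is justified); this is a standard but somewhat tedious piece of bookkeeping, which is presumably also why \cite{RS_09, Stanton_Tomas} simply quote the final formula.
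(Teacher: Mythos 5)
Your proof is correct, and the computation checks out: the reflection identity gives $|\Gamma(i\lambda)|^{-2}=\lambda\sinh(\pi\lambda)/\pi$, the exponentials from Stirling cancel exactly, and the exponent $(\rho-1)+\tfrac{m_1}{2}+1=m_1+m_2=n-1$ is right. The paper itself supplies no argument here — it merely cites \cite{RS_09} and \cite{Stanton_Tomas} — and what you have written is precisely the standard derivation from the explicit Gindikin--Karpelevich product that those references rely on, so there is nothing to compare beyond noting that the uniform two-sided Stirling bound for $|\Gamma(a+iy)|$ with fixed $a>0$ and $|y|\ge 1$ is indeed classical and closes the only gap you flag.
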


    \subsection{Lorentz spaces}Let $(X,\mu) $ be a \textit{sigma}-finite measure space. For $f: X \rightarrow \C$  a measurable function on $ X $, the distribution function $ d_f $ defined on $ [0,\infty) $  is given by $$ d_f(\alpha) = \mu (\{ x \in X : |f(x)| > \alpha \}) .$$
   For  $ p,q\in [1,\infty]$, the Lorentz spaces $ L^{p,q}(X) $ consist of all measurable functions $f  $ on $X$ for which  $\| f \|_{p,q} $ is finite, where  $\| f \|_{p,q} $ is the Lorentz space norm defined as follows, as in  \cite[Prop. 1.4.9]{Grafakos}  \begin{equation*} 
 	\|f\|_{p,q} = \begin{cases}
 		\left( \frac{q}{p}  \int_0^\infty [f^*(\alpha) \alpha^{\frac{1}{p}} ]^q \frac{d\alpha}{\alpha}\right)^{\frac{1}{q}} \text{  when } q < \infty\\
 		\sup_{\alpha > 0} \,\alpha^{\frac{1}{p}} f^*(\alpha)  \hspace{1.14 cm}\text{ when } q =\infty.
 	\end{cases}
 \end{equation*}
  where $$ f^*({t})  =\inf \{ \alpha>0  : d_f(\alpha) \leq t \} $$ is called the \textit{non-increasing rearrangement} of $ f $.

It is known that for $p \in [1, \infty]$, $L^{p,p}(X)$ coincides with $L^p(X)$. Additionally,  the space $L^{p, \infty}(X)$ is referred to as the weak-$L^p(X)$ space.
We need the following properties of non-increasing rearrangements of functions: Let $f_1$ and $f_2$ are measurable functions on $(X, \mu)$. Then  
\begin{equation}
\label{properties-decreasing rearrangement-1}
(f_1f_2)^{*}(t_1+t_2) \leq f_1^{*}(t_1)f_2^{*}(t_2) \quad \text{ for } 0\leq t_1, t_2<\infty.
\end{equation}
and 
\begin{equation}\label{properties-decreasing rearrangement-2}
 \int_X |f|^p\,d\mu=\int_0^\infty f^*(t)^p\,dt\quad  \text{ for } 0<p<\infty.   
\end{equation}
\begin{equation}\label{properties-decreasing rearrangement-3}
    \left(\int_0^{\infty}\left(\frac{1}{(1/f_2)^*(t)}f_1^*(t)\right)^pdt\right)^{\frac{1}{p}} \leq C\left(\int_{X}{\left| f_1(x)f_2(x)\right|}^p d\mu\right)^{\frac{1}{p}}.
\end{equation}
The last inequality follows from the integral analogue of \cite[Theorem 368]{Ha} (see also \cite{He_84}).

\section{Pitt's inequality on symmetric spaces of noncompact type}\label{sec_Pitt_on_X}
This section is dedicated to proving Pitt's inequality with general weights, specifically Theorem \ref{thm_Pitt_nr_intn}. Before we delve into the proof, we need to recall a few results by Bradley\cite{Br}, Calderon \cite{Cal}, and Muckenhoupt \cite{Mu_72}.

Let $\sigma$ denote the closed segment in the square $0 \leq \alpha \leq 1$ and $0 \leq \beta \leq 1$ with endpoints $(\alpha_1, \beta_1)$ and $(\alpha_2, \beta_2)$, where $\alpha_1 \neq \alpha_2$ and $\beta_1 \neq \beta_2$. For such a segment, we associate two functions on $\R^+ \times \R^+$, namely
\begin{align*} 
   \psi(t,s):=\text{min}(s^{\alpha_1}/t^{\beta_1}, s^{\alpha_2}/t^{\beta_2}) \quad \text{ and } \quad \phi(t,s):=s\frac{d}{ds}\psi(t,s).
\end{align*}
A simple calculation shows that for $\alpha_1>0$, $\phi(t,s) \leq s^{\alpha_1}/t^{\beta_1}$. When  $\alpha_1 =0,$ then $ \alpha_2>0$, and 
\begin{align*}
            \phi(t,s) = \begin{cases}
                \alpha_2 {s^{\alpha_2}}/{t^{\beta_2}} \quad &\text{if } s\leq t^{\frac{\beta_2-\beta_1}{\alpha_2}},\\
                 0 \quad  & \text{if } s> t^{\frac{\beta_2-\beta_1}{\alpha_2}}.
            \end{cases}
    \end{align*}
    For $0<\alpha_1<\alpha_2$, we have
    \begin{align}\label{phi_exp}
        \phi(t,s) = \begin{cases}
                \alpha_2 {s^{\alpha_2}}/{t^{\beta_2}} \quad &\text{if } s \leq t^{m},\\
                   \alpha_1 {s^{\alpha_1}}/{t^{\beta_1}} \quad  &\text{if } s >t^{m},
                   \end{cases}
    \end{align}
    where $m= (\beta_2 -\beta_1)/(\alpha_2-\alpha_1)$.

We define an operator $S(\sigma)$ on functions on $\R^+$ defined by
\begin{align}\label{def_Ssig}
S(\sigma)f= \int_0^{\infty}\phi(t,s)f(s)\frac{ds}{s}.    
\end{align}
The following result is due to Calderon  \cite[Theorem 8]{Cal}: Let $\M_1$ and $\M_2$ are {\textit{sigma}} finite measure spaces.
\begin{theorem} \label{th:1.10}
Let $T$ be a sublinear operator defined on $L^{p_1,1}(\M_1)+L^{p_2,1}(\M_2)$ with measurable functions on $\M_2$ as values. Suppose that $T$ is simultaneously weak-types $(p_1,q_1)$ and $(p_2,q_2)$. Let $\sigma$ be the segment with $(1/p_1,{1}/{q_1}),~(1/p_2,{1}/{q_2})$ as endpoints. Then, there exists a constant $C>0$ such that
    \bes
    (Tf)^*(t) \leq CS(\sigma)f^*(t),
    \ees
    for all $t \in (0,\infty)$.
\end{theorem}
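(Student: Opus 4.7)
The plan is a real-interpolation style argument carried out at the level of non-increasing rearrangements. For each $t > 0$ I would split $f$ using a free parameter $s > 0$: set $f = g_s + h_s$ with $g_s = f\,\chi_{\{|f| > f^*(s)\}}$ (the ``tall'' part, essentially supported on a set of measure $s$) and $h_s = f - g_s$. From the standard identity $\|u\|_{L^{p,1}} \asymp \int_0^\infty u^*(r)\, r^{1/p-1}\, dr$ and elementary bookkeeping with rearrangements, one obtains
\begin{align*}
\|g_s\|_{L^{p_1,1}(\M_1)} \lesssim \int_0^s f^*(u)\, u^{\frac{1}{p_1}-1}\, du, \qquad \|h_s\|_{L^{p_2,1}(\M_1)} \lesssim \int_s^\infty f^*(u)\, u^{\frac{1}{p_2}-1}\, du.
\end{align*}

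Next, by the sublinearity of $T$ we have $(Tf)^*(t) \leq (Tg_s)^*(t/2) + (Th_s)^*(t/2)$, and combining with the weak-type hypothesis in the equivalent form $(Tu)^*(t) \lesssim t^{-1/q_i}\, \|u\|_{L^{p_i,1}}$ (valid because $L^{p_i,1}$ embeds continuously into $L^{p_i}$), I would obtain the key $s$-parametrized bound
\begin{align*}
(Tf)^*(t) \lesssim \frac{1}{t^{1/q_1}}\int_0^s f^*(u)\, u^{\frac{1}{p_1}-1}\, du + \frac{1}{t^{1/q_2}}\int_s^\infty f^*(u)\, u^{\frac{1}{p_2}-1}\, du,
\end{align*}
uniformly in $s > 0$. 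Taking the infimum in $s$, the two summands balance at $s = t^{m}$ with $m = (\beta_2-\beta_1)/(\alpha_2-\alpha_1)$, which is precisely the cutoff appearing in the explicit formula \eqref{phi_exp} for $\phi(t,s)$.

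Finally, I would identify the optimized right-hand side with $S(\sigma) f^*(t)$ by integrating by parts in $s$ and exploiting the relation $\phi(t,s) = s\,\partial_s \psi(t,s)$: on the interval $(0,t^m)$ the kernel matches $\alpha_2 s^{\alpha_2}/t^{\beta_2}$, and on $(t^m,\infty)$ it matches $\alpha_1 s^{\alpha_1}/t^{\beta_1}$, which is exactly the piecewise description of $\phi$ provided in \eqref{phi_exp}. The main technical obstacle will be this last identification: one must verify that the boundary contributions at $s = 0$ and $s = \infty$ vanish (which uses that $0 \leq \alpha_1 < \alpha_2 \leq 1$ forces $\psi(t,s) \to 0$ at one endpoint and integrability at the other) and separately treat the degenerate case $\alpha_1 = 0$, where $\phi$ has a hard cutoff. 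Once this bookkeeping is carried out, the pieces assemble to the desired pointwise inequality $(Tf)^*(t) \lesssim S(\sigma) f^*(t)$.
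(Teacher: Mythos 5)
This theorem is not proved in the paper at all: it is quoted verbatim from Calder\'on \cite{Cal} (Theorem 8 there), so you are supplying a proof where the authors only cite one. Your outline is the standard direct proof of Calder\'on's rearrangement estimate and is essentially the right route: split $f$ at height $f^*(s)$, use $(T f)^*(t)\le (Tg_s)^*(t/2)+(Th_s)^*(t/2)$ together with the weak-type bounds in the form $(Tu)^*(t)\lesssim t^{-1/q_i}\|u\|_{L^{p_i,1}}$, and then choose $s=t^m$. Two remarks, one of which is a genuine (though standard and repairable) gap.

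The gap: your displayed bound $\|h_s\|_{L^{p_2,1}}\lesssim \int_s^\infty f^*(u)\,u^{1/p_2-1}\,du$ is false as stated. Since $h_s$ is the part of $f$ truncated at height $f^*(s)$, one only gets $h_s^*(u)\le\min\bigl(f^*(s),f^*(u)\bigr)$, hence
\begin{equation*}
\|h_s\|_{L^{p_2,1}}\;\lesssim\; f^*(s)\,s^{1/p_2}\;+\;\int_s^\infty f^*(u)\,u^{\frac{1}{p_2}-1}\,du,
\end{equation*}
and the first term cannot be dropped: if $f^*=\chi_{[0,s+\varepsilon)}$ then the left-hand side is $\asymp s^{1/p_2}$ while your right-hand side is $O(\varepsilon s^{1/p_2-1})$. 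The standard repair is to carry the extra term and observe that after the choice $s=t^m$ it is absorbed into the first summand: monotonicity of $f^*$ gives $f^*(s)\,s^{1/p_1}\le p_1^{-1}\int_0^s f^*(u)u^{1/p_1-1}du$, and the definition $m=(1/q_1-1/q_2)/(1/p_1-1/p_2)$ yields $t^{-1/q_2}(t^m)^{1/p_2-1/p_1}=t^{-1/q_1}$, so $t^{-1/q_2}f^*(t^m)(t^m)^{1/p_2}\lesssim t^{-1/q_1}\int_0^{t^m}f^*(u)u^{1/p_1-1}du$. With that correction the argument closes. The other remark is that your ``main technical obstacle'' is not one: no integration by parts is needed at the end, because \eqref{phi_exp} already gives $\phi(t,\cdot)$ explicitly as $\alpha_2 s^{\alpha_2}/t^{\beta_2}$ on $(0,t^m)$ and $\alpha_1 s^{\alpha_1}/t^{\beta_1}$ on $(t^m,\infty)$; after labelling the endpoints so that $(\alpha_2,\beta_2)=(1/p_1,1/q_1)$ and $(\alpha_1,\beta_1)=(1/p_2,1/q_2)$ (with $p_1<p_2$), the optimized right-hand side \emph{is} $S(\sigma)f^*(t)$ up to constants, with no boundary terms to check; the degenerate case $\alpha_1=0$ only changes the kernel to a hard cutoff and is handled identically.
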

We need the following weighted Hardy inequalities. 
\begin{theorem}[{{\cite{Mu_72,Br}}}]\label{thm_brad}
Let $1\leq p \leq q \leq \infty$. Suppose $u, v$ and $F$ are non-negative functions defined on $(0,\infty)$. Then the following are true
\begin{itemize}
    \item[(i)] We have  \bes
\left(\int_0^{\infty}\left[u(t)\int_0^t F(s)ds\right]^q \,dt\right)^{\frac{1}{q}} \leq C\left(\int_0^{\infty}\left[v(t)F(t)\right]^p \, dt\right)^{\frac{1}{p}},
\ees
if and only if,
\bes
\sup_{s>0}\left(\int_s^{\infty}u(t)^qdt\right)^{\frac{1}{q}}\left(\int_0^sv(t)^{-p'}dt\right)^{\frac{1}{p'}}< \infty
\ees
with the usual modification if $p=1$ and $q=\infty$.
\item[(ii)]  Similarly, for the dual operator, we have
\bes
\left(\int_0^{\infty}\left[u(t)\int_t^{\infty}F(s)ds\right]^qdt\right)^{\frac{1}{q}} \leq C\left(\int_0^{\infty}\left[v(t)F(t)\right]^pdt\right)^{\frac{1}{p}},
\ees
if and only if,
\bes
\sup_{s>0}\left(\int_0^su(t)^qdt\right)^{\frac{1}{q}}\left(\int_s^{\infty}v(t)^{-p'}dt\right)^{\frac{1}{p'}}
<\infty.
\ees
\end{itemize}
\end{theorem}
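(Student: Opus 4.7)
\bigskip

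\noindent\textbf{Proof proposal.} Since Theorem \ref{thm_brad} is an \emph{if and only if} statement, the plan is to establish both implications in part (i) and then reduce part (ii) to (i). The necessity is elementary and proceeds by testing against a carefully chosen extremizer, while the sufficiency is the classical Muckenhoupt--Bradley estimate and constitutes the technical core.

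For the necessity in (i), the plan is to plug in the test function
$$F(s) = v(s)^{-p'}\chi_{(0,s_0)}(s)$$
for an arbitrary $s_0>0$. Writing $W(s_0) := \int_0^{s_0}v(\tau)^{-p'}d\tau$, one has $\int_0^t F(\tau)\,d\tau = W(s_0)$ for every $t \geq s_0$. Restricting the outer integral of the Hardy inequality to $(s_0,\infty)$ therefore gives the lower bound $W(s_0)\left(\int_{s_0}^\infty u(t)^q\,dt\right)^{1/q}$ for the left-hand side. On the other hand, using the identity $p(1-p')=-p'$, the right-hand side reduces to
$\left(\int_0^{s_0} v(t)^{-p'}\,dt\right)^{1/p} = W(s_0)^{1/p}.$
Dividing and rearranging produces the supremum condition, after treating the case $W(s_0)=\infty$ separately by a truncation argument.

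For the sufficiency in (i), let $A$ denote the supremum appearing in the condition, and set $W(t)=\int_0^t v(\tau)^{-p'}d\tau$. The strategy is to split the integrand against a power of $W$ and apply H\"older: for a parameter $\alpha>0$ to be chosen,
$$\int_0^t F(s)\,ds \;=\; \int_0^t \bigl[F(s)v(s)W(s)^{-\alpha}\bigr]\bigl[v(s)^{-1}W(s)^{\alpha}\bigr]\,ds.$$
Applying H\"older with exponents $(p,p')$ makes the second factor integrate explicitly via the identity $(W^{1+\alpha p'})' = (1+\alpha p')\,W^{\alpha p'}v^{-p'}$, producing a power of $W(t)$. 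Raising to the $q$-th power, multiplying by $u(t)^q$, and applying Fubini, one arrives at an iterated integral in which the inner integral $\int_s^\infty u(t)^q W(t)^{\gamma}\,dt$ for the appropriate exponent $\gamma<0$ is bounded, via integration by parts and the supremum condition, by $A^q\,W(s)^{\alpha p}$. Choosing $\alpha$ to make the powers of $W(s)$ cancel then yields the Hardy inequality with constant $CA$. When $p<q$, the final step is upgraded from Fubini to Minkowski's integral inequality on $L^{q/p}$, which is where the constraint $p \leq q$ is used.

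Part (ii) follows by an entirely parallel argument with integrals running over $(t,\infty)$ instead of $(0,t)$; alternatively, it is obtained from (i) by the substitution $s\mapsto 1/s$, $t\mapsto 1/t$ and a redefinition of the weights, which exchanges the two operators. The main obstacle is the sufficiency argument in the regime $p<q$: balancing the parameter $\alpha$ against the Minkowski step must be done carefully, and the reduction from an iterated integral to a single Lebesgue norm is the delicate point. The endpoint case $p=1$ or $q=\infty$ is handled by taking limits, with the customary interpretation of the supremum condition.
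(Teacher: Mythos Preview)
The paper does not prove Theorem~\ref{thm_brad}; it is quoted verbatim as a known result with the citations \cite{Mu_72,Br} and is used as a black box in the proof of Theorem~\ref{thm_Pitt_nr_intn}. There is therefore no ``paper's own proof'' to compare your proposal against.

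That said, your sketch is a faithful outline of the classical Muckenhoupt--Bradley argument. The necessity via the test function $F=v^{-p'}\chi_{(0,s_0)}$ is standard and correct (with the usual caveat that if $W(s_0)=\infty$ one truncates). For sufficiency, the splitting
\[
\int_0^t F = \int_0^t \bigl[FvW^{-\alpha}\bigr]\bigl[v^{-1}W^{\alpha}\bigr]
\]
followed by H\"older, the explicit antiderivative of $W^{\alpha p'}v^{-p'}$, and then Minkowski in $L^{q/p}$ when $p<q$, is exactly Muckenhoupt's original approach in \cite{Mu_72} (for $p=q$) and Bradley's extension in \cite{Br} (for $p\le q$). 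The reduction of (ii) to (i) by the change of variable $t\mapsto 1/t$ is also standard. One small point: in your sufficiency sketch you should be explicit that the choice of $\alpha$ is $\alpha=1/(pp')$ (or an equivalent normalization), and that the bound on $\int_s^\infty u^q W^\gamma$ uses both the supremum condition and monotonicity of $W$; otherwise the ``powers of $W(s)$ cancel'' step is not fully justified. But these are details of exposition, not genuine gaps.
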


We are now prepared to prove weighted Fourier inequalities on noncompact symmetric spaces $\X$ of rank one. The Hausdorff-Young inequality and the restriction theory developed by Ray and Sarkar \cite{RS_09} will play a fundamental role in our proof.

\begin{proof}[\textbf{Proof of Theorem \ref{thm_Pitt_nr_intn}}]
We first consider the case when $1<p\leq q<\infty$. Let us consider the following measure spaces $(\X,dx)$ and $(\R, |c(\lambda)|^{-2}\,d\lambda)$. For a given fixed $q_0 \in(1,\infty)$, we define a sublinear operator $\mathcal{T}_{q_0}$ on $C_c^\infty(\mathbb X)$ by 
\bes
\mathcal{T}_{q_0} f(\lambda) = \|\widetilde{f}(\lambda+i\rho_{q_0}, \cdot)\|_{L^{q_0}(K)}.
\ees
Then, by using the restriction inequality Theorem \ref{RS_4.2}, we have
\begin{align}\label{T_q0_1,inf}
\left( \int_K |\widetilde{f} (\lambda+i \rho_{q_0},k)|^{q_0} dk \right)^{\frac{1}{q_0}} \leq C_{q_0} \|f\|_{L^1(\X)},
\end{align}
for all $\lambda \in \R$. That is the operator $\mathcal{T}_{q_0}$ is of strong type $(1, \infty)$.  Next, we let $q_0 \in (1,2]$. By substituting $p=q=q_0$ in (the Hausdorff-Young inequality on symmetric spaces) Theorem \ref{thm_RS_HYinq}, we have
\begin{align*}
    \left( \int_{\R} \left( \int_K \left| \widetilde {f}(\lambda +i\rho_{q_0}, k)  \right|^{q_0} \, dk \right)^{\frac{q_0'}{q_0}} |c(\lambda)|^{-2} d\lambda \right) ^{\frac{1}{q_0'}} \leq C_{q_0} \|f\|_{L^{q_0}(\X)}.
\end{align*}
That is,
\begin{align}\label{T_q0_q_0q0'}
\|\mathcal{T}_{q_0} f\|_{L^{q_0'}(\R, \,|c(\lambda)|^{-2}d\lambda)}\leq C_{q_0}\|f\|_{L^{q_0}(\X)}.   
\end{align}
Thus, the operator $\mathcal{T}_{q_0}$ is of strong type $(1, \infty)$ and $(q_0, q_0')$. We recall that for any function $F$ on $\mathbb{R}$, $F^{\star}$ denotes the non-increasing rearrangement of $F$ with respect to the Plancherel measure $|c(\lambda)|^{-2} d\lambda$. Now, by applying Theorem \ref{th:1.10} to the operator $\mathcal{T}_{q_0}$, we get
\begin{equation}\label{T_q0_leq_phi}
    \begin{aligned}
    (\mathcal{T}_{q_0} f)^{\star}(t)& \leq C S(\sigma) (f^*)(t)\\
        & =  C  \int_0^{\infty} \phi(t,s) f^*(s)\, \frac{ds}{s},
\end{aligned}
\end{equation}
for all $ t\in (0,\infty)$, where $\sigma$ is the line segment with $({1}/{q}_0, {1}/{q_0'}), (1,0)$  as endpoints. For this segment $\sigma$, we express $\phi(t,s)$ as given in \eqref{phi_exp}:
\begin{align}\label{phi(t,s)exp_apply}
        \phi(t,s) = \begin{cases}   s \quad  & \text{if } s\leq t^{-1},
                \\
                 \\
                \frac{1}{q_0} \frac{s^{\frac{1}{q_0}}}{t^{\frac{1}{q_0'}}}  \quad &\text{if } s> t^{-1}.
                   \end{cases}
    \end{align}
    Plugging the formula above into the inequality \eqref{T_q0_leq_phi}, we obtain
    \begin{equation}\label{T_q0_<ex}
    \begin{aligned}
         (\mathcal{T}_{q_0} f)^{\star}(t)& \leq C \left(  \int_0^{t^{-1}} \phi(t,s) f^*(s)\, \frac{ds}{s}+   \int_{t^{-1}}^{\infty} \phi(t,s) f^*(s) \, \frac{ds}{s}\right)\\
         & = C \left(  \int_0^{t^{-1}}   f^*(s)\,  ds + \frac{1}{q_0} t^{-\frac{1}{q_0'}} \int_{t^{-1}}^{\infty} s^{-\frac{1}{q_0'}} f^*(s)\,  ds\right).  
    \end{aligned}
    \end{equation}
    By multiplying the inequality above by $U(t)$, integrating, and applying Minkowski's inequality, we can write for any $q\in (1,\infty)$
\begin{equation}\label{I1+I2_nr1}
\begin{aligned}
\left(\int_0^{\infty}\left(U(t)(\mathcal{T}_{q_0} f)^{\star}(t)\right)^q\, dt\right)^{\frac{1}{q}}&\leq C \left( I_1+I_2 \right),
 \end{aligned}
\end{equation}
where
\begin{align}
    I_1 &=  \left(\int_0^{\infty}U(t)^q\left(\int_0^{t^{-1}}f^*(s)ds\right)^qdt\right)^{\frac{1}{q}},\label{def_I1_nr1}\\
    I_2 & = \left(\int_0^{\infty}U(t)^q t^{-\frac{q}{q_0'}}\left(\int_{t^{-1}}^{\infty}s^{-\frac{1}{q_0'}}f^*(s)ds\right)^qdt\right)^{\frac{1}{q}}.\label{def_I2_nr1}
\end{align}
We analyze the estimates of $I_1$ and $I_2$ separately. The change of variable $t \mapsto t^{-1}$ in the integral inside \eqref{def_I1_nr1} implies that
\begin{align*}
     I_1 &=  \left(\int_0^{\infty}\left( {U(t^{-1})}{t^{-\frac{2}{q}}}\int_0^{t}f^*(s)ds\right)^qdt\right)^{\frac{1}{q}}.
\end{align*}
Here, we will utilize Hardy's inequalities.  Specifically, by applying Theorem \ref{thm_brad} (i) with $u(t) ={U(t^{-1})}{t^{-\frac{2}{q}}}$, $F(t)=f^*(t)$, and $1<p\leq q<\infty$, we derive
\begin{align}\label{est_I1_nr1}
    I_1 \leq  C\left(\int_0^{\infty}\left(V(t)f^*(t)\right)^pdt\right)^{\frac{1}{p}},
\end{align}
if and only if 
\begin{align*}
     \sup_{s>0}\left(\int_s^{\infty} {U(t^{-1})^q}{t^{-2}}dt\right)^{\frac{1}{q}}\left(\int_0^s V(t)^{-p'}dt\right)^{\frac{1}{p'}}< \infty,
\end{align*}
which, by a change of variable, transforms to the hypothesis \eqref{uv_loc_nr_int1}
\begin{align*}
   \sup_{s>0}\left(\int_{0}^{s^{-1}} U(t)^q dt\right)^{\frac{1}{q}}\left(\int_{0}^s V(t)^{-p'} dt\right)^{\frac{1}{p'}} < \infty.
\end{align*}
Next, to analyze $I_2$, we will approach similarly to the case of $I_1$. However, after the change of variable $t\rightarrow t^{-1}$ in \eqref{def_I2_nr1}, we employ Theorem \ref{thm_brad} (ii) with $u(t) = U(t^{-1}) t^{\frac{1}{q_0'}- \frac{2}{q}}$, $F(t)= t^{-\frac{1}{q_0'}}f^*(t)$, and $v(t)= t^{\frac{1}{q_0'}} V(t)$. This gives us that for any $1<p\leq q<\infty$
\begin{align}\label{est_I2_nr1}
    I_2 \leq  C\left(\int_0^{\infty}\left(V(t)f^*(t)\right)^p dt\right)^{\frac{1}{p}}
\end{align}
if and only if
\begin{align*}
    \sup_{s>0}\left(\int_0^s U(t^{-1})^q t^{\frac{q}{q_0'}-2} \, dt\right)^{\frac{1}{q}}\left(\int_s^{\infty} V(t)^{-p'}t^{-\frac{p'}{q_0'}} dt\right)^{\frac{1}{p'}} <\infty,
\end{align*}
which is equivalent to our hypothesis
\begin{align}\label{UV_glo_pf}
    \sup_{s>0}\left(\int_{s^{-1}}^{\infty} U(t)^q t^{-\frac{q}{q_0'}}dt\right)^{\frac{1}{q}}\left(\int_s^{\infty} V(t)^{-p'}t^{-\frac{p'}{q_0'}} dt\right)^{\frac{1}{p'}} < \infty.
\end{align}
Thus, combining the inequalities \eqref{I1+I2_nr1}, \eqref{est_I1_nr1}, and \eqref{est_I2_nr1}, we deduce that for $q_0\in (1,2]$
\begin{align}\label{uTq0<Vf}
    \left(\int_0^{\infty}\left(U(t)(\mathcal{T}_{q_0} f)^{\star}(t)\right)^q\, dt\right)^{\frac{1}{q}} & \leq  C\left(\int_0^{\infty}\left(V(t)f^*(t)\right)^p dt\right)^{\frac{1}{p}} 
\end{align}
for all $f\in C_c^{\infty}(\X)$. Therefore, by utilizing the property (\ref{properties-decreasing rearrangement-2}) of the non-decreasing rearrangement of a function and the inequality above, we can write
\begin{align*}
    \left(\int_{\R}\left( \int_K\left|  \widetilde{f}(\lambda +i\rho_{q_0},k) \right|^{q_0} dk\right)^{\frac{q}{q_0}} u(\lambda)^q |{c(\lambda)}|^{-2}d\lambda\right)^{\frac{1}{q}} & = \left(\int_0^{\infty}\left(U(t)(\mathcal{T}_{q_0} f)^{\star}(t)\right)^{q}dt\right)^{\frac{1}{q}}.
\end{align*}
After applying \eqref{uTq0<Vf}, it follows that
\begin{align*}
    \left(\int_{\R}\left( \int_K\left|  \widetilde{f}(\lambda +i\rho_{q_0},k) \right|^{q_0} dk\right)^{\frac{q}{q_0}} u(\lambda)^q |{c(\lambda)}|^{-2}d\lambda\right)^{\frac{1}{q}} & \leq C\left(\int_0^{\infty}\left(V(t)f^*(t)\right)^p dt\right)^{\frac{1}{p}} \\
   & =C\left(\int_0^{\infty}\left(\frac{1}{(1/v)^*(t)}f^*(t)\right)^pdt\right)^{\frac{1}{p}} \\
   &\leq C\left(\int_{\X}{\left| f(x)v(x)\right|}^pdx\right)^{\frac{1}{p}}.
\end{align*}
where in the last step, we used  \eqref{properties-decreasing rearrangement-3}. This concludes our Theorem \ref{thm_Pitt_nr_intn} for $q_0 \in (1,2]$. 

For the case $q_0>2$, the proof is the same as above except to replace $q_0$ by $q_0'$. For the case $p=1$ or $q=\infty$ the theorem will also follow similarly with obvious modifications. This completes the proof of Theorem \ref{thm_Pitt_nr_intn}.
\end{proof}
We observe here that we did not obtain the weighted inequality on the lines $\Im\lambda=\pm\rho$ (that is for $q_0=1$ or $q_0=\infty$). This is because, in these cases $q_0=1$ or $q_0=\infty$, the operator $\mathcal{T}_{q_0}$ (defined in the proof above) is only known to be of strong type $(1, \infty)$. No other strong or weak type estimate of the operator is known yet. Consequently, we could not apply Calder{\'o}n's estimate, Theorem \ref{th:1.10}.

Next, our aim is to demonstrate that, under certain conditions on $p$ and  $q$, the hypothesis (\ref{uv_loc_nr_int1}) implies (\ref{uv_glo_nr_int1}). We will follow the proof in {{\cite[Proposition 2.6]{He_84}}}. However, to accommodate our need, we will generalize their result further so that it follows as a corollary of the lemma presented below.
\begin{lemma}\label{1em_imply_new}
Let $q_0\in (1,\infty)$ and $U$ and $V$ be two nonegative functions defined on $(0,\infty)$, such that $U$ is non-increasing and $V$ is non-decreasing. Suppose $N\geq 1$, $1\leq p, q \leq \infty$. If either $p<{q_0}$ or $q>{q_0'}$, then
    \begin{align}\label{UV_locimp}
        \sup_{s>0}\left(\int_{0}^{s^{-1}} U(t)^q t^{N-1} dt\right)^{\frac{1}{q}}\left(\int_{0}^s V(t)^{-p'} t^{N-1} dt\right)^{\frac{1}{p'}} < \infty,
    \end{align}
    implies 
    \begin{align}\label{UV_gloimp}
        \sup_{s>0}\left(\int_{s^{-1}}^{\infty} U(t)^q t^{-\frac{Nq}{q_0'}+N-1}dt\right)^{\frac{1}{q}}\left(\int_s^{\infty} V(t)^{-p'}t^{-\frac{Np'}{q_0'}+N-1} dt\right)^{\frac{1}{p'}} < \infty.
    \end{align}
\end{lemma}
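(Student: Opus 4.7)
The plan is to prove, in each of the two hypothesized cases, matching estimates of the global $U$- and $V$-tails in terms of a common local quantity; the two estimates will be designed so that the common quantity cancels upon multiplication, leaving an absolute bound. Throughout, the workhorse is the pair of pointwise ``averaging'' bounds supplied by the monotonicity assumptions:
\[
U(r)^q\le N r^{-N}\Phi_U(r),\qquad V(r)^{-p'}\le Nr^{-N}\Psi_V(r),
\]
where $\Phi_U(r):=\int_0^r U^q t^{N-1}dt$ and $\Psi_V(r):=\int_0^r V^{-p'} t^{N-1}dt$. Denote the $U$- and $V$-factors in \eqref{UV_gloimp} by $A(s)$ and $B(s)$.

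Consider first the case $q>q_0'$. Substituting the $U$-averaging bound into $A(s)^q$, interchanging the order of integration, and evaluating the inner integral $\int_{\max(s^{-1},r)}^\infty t^{-Nq/q_0'-1}dt$ in closed form produces the self-improving inequality
\[
A(s)^q\le\tfrac{q_0'}{q}\bigl(s^{Nq/q_0'}\Phi_U(s^{-1})+A(s)^q\bigr).
\]
Because $q_0'/q<1$, absorption gives $A(s)\lesssim s^{N/q_0'}\Phi_U(s^{-1})^{1/q}$. For $B(s)$, I would dyadically decompose $[s,\infty)=\bigsqcup_{k\ge 0}[2^k s,2^{k+1}s]$: the $V$-averaging bound gives $\int_{2^ks}^{2^{k+1}s}V^{-p'}t^{-Np'/q_0'+N-1}dt\lesssim (2^ks)^{-Np'/q_0'}\Psi_V(2^ks)$; the local hypothesis at scale $2^ks$ converts $\Psi_V(2^ks)$ into $\Phi_U(1/(2^ks))^{-p'/q}$; and a second application of the $U$-averaging bound (together with $U$ non-increasing) furnishes the comparison $\Phi_U(1/(2^ks))\gtrsim 2^{-kN}\Phi_U(s^{-1})$ (one writes $\Phi_U(s^{-1})=\Phi_U(1/(2^ks))+\int_{1/(2^ks)}^{s^{-1}}U^q t^{N-1}dt$ and bounds the last integral by $2^{kN}\Phi_U(1/(2^ks))$). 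Summing then produces a geometric series with common ratio $2^{Np'(1/q-1/q_0')}$, which converges precisely because $q>q_0'$, so that $B(s)\lesssim s^{-N/q_0'}\Phi_U(s^{-1})^{-1/q}$. Multiplying the two bounds, both the powers of $s$ and the factors $\Phi_U(s^{-1})^{\pm 1/q}$ cancel, and \eqref{UV_gloimp} follows.

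The case $p<q_0$ (equivalently $p'>q_0'$) is handled by the completely symmetric argument with the roles of $(U,q)$ and $(V^{-1},p')$ interchanged: the absorb trick applies to $B(s)$ first, yielding $B(s)\lesssim s^{-N/q_0'}\Psi_V(s)^{1/p'}$, and the dyadic-plus-local-plus-comparison procedure yields $A(s)\lesssim s^{N/q_0'}\Psi_V(s)^{-1/p'}$, with the required geometric series (of ratio $2^{Nq(1/p'-1/q_0')}<1$) converging thanks to $p'>q_0'$. The main obstacle, and precisely where the strict ``or''-hypothesis is needed, is the convergence of these geometric series in the dyadic step: each comparison inequality unavoidably loses a factor $2^{kN}$ at scale $2^k$, which must be strictly beaten by the decay arising from the weight $t^{-N/q_0'}$ --- and this strict beating is exactly what is guaranteed by either $q>q_0'$ or $p'>q_0'$.
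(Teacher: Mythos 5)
Your argument is correct, but it is organized quite differently from the paper's proof, which follows Heinig's original scheme. The paper uses the monotonicity of one weight to turn the local condition \eqref{UV_locimp} into a pointwise bound (e.g.\ for $p<q_0$, $U(s)s^{N/q}\lesssim\bigl(\int_0^{s^{-1}}V^{-p'}y^{N-1}dy\bigr)^{-1/p'}$), substitutes this into the $U$-tail of \eqref{UV_gloimp}, and then uses the monotonicity of the \emph{other} weight to pull $V$ out at the endpoint of each integral, reducing both tails to explicit power integrals $\int_{s^{-1}}^{\infty}t^{Nq/p'-Nq/q_0'-1}dt$ and $\int_s^{\infty}t^{N-Np'/q_0'-1}dt$ whose convergence is exactly where $p<q_0$ (resp.\ $q>q_0'$) enters; the $V(s)^{\pm1}$ and $s^{\pm(N/q_0'-N/p')}$ factors then cancel. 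You instead run a Hardy-type self-improvement/absorption argument on one tail and a dyadic decomposition with a doubling estimate $\Phi_U(1/(2^ks))\gtrsim 2^{-kN}\Phi_U(s^{-1})$ plus the local hypothesis at every dyadic scale on the other. Both proofs rest on the same three ingredients (monotonicity converts values to averages, the local condition links the two averages, and the strict exponent inequality provides summability), so neither is more general in substance; the paper's version is shorter and entirely explicit, while yours isolates more transparently where the strictness of the hypothesis is consumed and would survive perturbations of the power weights. One small point to tidy up: the absorption step $A(s)^q\le\frac{q_0'}{q}\bigl(s^{Nq/q_0'}\Phi_U(s^{-1})+A(s)^q\bigr)\Rightarrow A(s)^q\lesssim s^{Nq/q_0'}\Phi_U(s^{-1})$ is only legitimate once $A(s)<\infty$ is known a priori; you should first run the argument on the truncated integral $\int_{s^{-1}}^{R}$ (finite because $U$ is non-increasing and $\Phi_U(s^{-1})<\infty$) and let $R\to\infty$, and likewise dispose of the degenerate cases $\Phi_U\equiv 0$ or $\Psi_V\equiv 0$. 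The paper's direct substitution avoids this issue altogether.
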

\begin{proof}
    We first consider the case $1 < p < q_0$ and $q<\infty$. Since $U$ is non-increasing, we can write
    \begin{align*}
        \int_{0}^{s} U(y)^q s^{N-1} dy\geq U(s)^q\int_{0}^{s} s^{N-1} dy=  U(s)^q s^{N},
    \end{align*}
   which, together with \eqref{UV_locimp}, implies that there exists a constant $C>0$ such that
\begin{align}\label{U(s)<}
U(s)s^{\frac{N}{q}}\leq C \left( \int_0^{s^{-1}} V(y)^{-p'} y^{N-1}dy\right)^{-\frac{1}{p'}},    
\end{align}
for all $s>0$. Replacing the variable $s$ with $t$ in the inequality above and utilizing the same, we can write
\begin{align*}
    I_1(s)&:=\left(\int_{s^{-1}}^{\infty} U(t)^q t^{-\frac{Nq}{q_0'}+N-1}dt\right)^{\frac{1}{q}}\left(\int_s^{\infty} V(t)^{-p'}t^{-\frac{Np'}{q_0'}+N-1} dt\right)^{\frac{1}{p'}}\\
    &\leq C\left(\int_{s^{-1}}^{\infty} \left( \int_0^{t^{-1}} V(y)^{-p'} y^{N-1} dy\right)^{-\frac{q}{p'}} t^{-N-\frac{Nq}{q_0'}+N-1}dt \right)^{\frac{1}{q}}\left(\int_{s}^{\infty} V(t)^{-p'}t^{-\frac{Np'}{q_0'}+N-1} dt\right)^{\frac{1}{p'}}\\
    &  \leq C \left(\int_{s^{-1}}^{\infty} V(t^{-1})^{q}t^{\frac{Nq}{p'}-\frac{Nq}{q_0'}-1} dt\right)^{\frac{1}{q}}V(s)^{-1}\left(\int_{s}^{\infty}t^{-\frac{Np'}{q_0'}+N-1} dt\right)^{\frac{1}{p'}}
\end{align*}
for all $s>0$, where in the last step, we used the non-decreasing property of $V$. Additionally, since $t \mapsto V(t^{-1})$ is non-increasing and $p < q_0$, we obtain the following from the inequality above
\begin{align*}
    I_1(s)\leq C \, V(s)s^{\frac{N}{q_0'}-\frac{N}{p'}}V(s)^{-1}s^{-\frac{N}{q_0'}+\frac{N}{p'}}= C.
\end{align*}

Next, we suppose $q_0'< q<\infty$ and $p>1$. Similarly as in \eqref{U(s)<},  using the non-decreasing property of  $V$,   we can derive that
\begin{equation*}
V(t)^{-1}t^{\frac{N}{p'}}\leq C \left( \int_0^{t^{-1}} U(y)^q y^{N-1}\,dy\right)^{-\frac{1}{q}}
\end{equation*}
for all $t>0$. Substituting the inequality above in the expression of $I_1(s)$ and using the non-decreasing property of $U$, it follows
\begin{align*}
    I_1(s) & \leq  C\left(\int_{s^{-1}}^{\infty} U(t)^q t^{-\frac{Nq}{q_0'}+N-1}dt \right)^{\frac{1}{q}}\left(\int_{s}^{\infty}\left( \int_0^{t^{-1}} U(y)^q y^{N-1}\,dy\right)^{-\frac{p'}{q}}t^{-N-\frac{Np'}{q_0'}+N-1} dt\right)^{\frac{1}{p'}}\\
&\leq C U(s^{-1})\left(\int_{s^{-1}}^{\infty}t^{-\frac{Nq}{q_0'}+N-1} dt\right)^{\frac{1}{q}}\left(\int_{s}^{\infty} U(t^{-1})^{-p'}t^{\frac{Np'}{q}-\frac{Np'}{q_0'}-1} dt\right)^{\frac{1}{p'}}\\
& = CU(s^{-1})s^{\frac{N}{q_0'}-\frac{N}{q}}U(s^{-1})^{-1}s^{-\frac{N}{q_0'}+\frac{N}{q}} \\
&= C,
\end{align*}
for all $s>0$, where in the second last step we used $q>q_0'$ to ensure the convergence of the integral near $t =\infty$. 
The case for $p=1$ and $q=\infty$ follows similarly with obvious modifications.
\end{proof}

From the lemma above and Theorem \ref{thm_Pitt_nr_intn}, we have the following result on Pitt's inequality.
\begin{corollary}\label{cor_p<q0}
For a given $1 \leq p \leq  q \leq \infty$, suppose the functions $U$ and $V$ satisfy the following inequality:
    \begin{equation} \nonumber
\sup_{s>0}\left(\int_{0}^{s^{-1}} U(t)^q dt\right)^{\frac{1}{q}}\left(\int_{0}^s V(t)^{-p'} dt\right)^{\frac{1}{p'}} < \infty.
\end{equation} Let $1<q_0\leq 2$ such that either $p<q_0$ or $q>q_0'$.
Then there exists a constant $C>0$ such that for all $f \in C_c^{\infty}(\X)$, the following holds
\bes
\left(\int_{\R}\left( \int_K\left|  \widetilde{f}(\lambda +i\rho_{q_0},k) \right|^{q_0} dk\right)^{\frac{q}{q_0}} u(\lambda)^q |{c(\lambda)}|^{-2}\, d\lambda\right)^{\frac{1}{q}} \leq C\left(\int_{\X} |f(x)|^{p} v(x)^p \,  dx\right)^{\frac{1}{p}}.
\ees
Moreover, for $q_0>2$, if $p<q_0'$ or $q>q_0$, then the inequality above holds true.
\end{corollary}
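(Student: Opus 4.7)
The plan is to derive this corollary directly by combining Theorem \ref{thm_Pitt_nr_intn} with Lemma \ref{1em_imply_new}. Theorem \ref{thm_Pitt_nr_intn} requires two hypotheses: the local condition \eqref{uv_loc_nr_int1} and the global condition \eqref{uv_glo_nr_int1}. The corollary only assumes the local condition, so the task reduces to showing that the local condition \emph{implies} the global one under the given constraints on $p, q, q_0$.

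First I would verify that Lemma \ref{1em_imply_new} applies. By definition $U = u^{\star}$ is non-increasing, and since $V = 1/(1/v)^{*}$ with $(1/v)^{*}$ non-increasing, $V$ is non-decreasing. Thus the monotonicity assumptions of Lemma \ref{1em_imply_new} are satisfied. Next I would set $N = 1$ in Lemma \ref{1em_imply_new} so that the factor $t^{N-1} = 1$ matches the local hypothesis \eqref{uv_loc_nr_int1}, and split into two cases according to whether $q_0 \leq 2$ or $q_0 > 2$.

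For $q_0 \in (1,2]$, one has $\max\{q_0, q_0'\} = q_0'$, so the required global condition \eqref{uv_glo_nr_int1} has the weights $t^{-q/q_0'}$ and $t^{-p'/q_0'}$. With $N = 1$, the exponents in Lemma \ref{1em_imply_new} become $-Nq/q_0' + N - 1 = -q/q_0'$ and $-Np'/q_0' + N - 1 = -p'/q_0'$, exactly matching \eqref{uv_glo_nr_int1}. Thus under the additional hypothesis $p < q_0$ or $q > q_0'$, Lemma \ref{1em_imply_new} yields the global condition, and applying Theorem \ref{thm_Pitt_nr_intn} completes this case. For $q_0 > 2$, one has $\max\{q_0, q_0'\} = q_0$, so the global condition involves $t^{-q/q_0}$ and $t^{-p'/q_0}$. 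Since $(q_0')' = q_0$, I would apply Lemma \ref{1em_imply_new} with $q_0$ replaced by $q_0'$; the resulting weight exponents become $-q/(q_0')' = -q/q_0$ and $-p'/q_0$, again matching \eqref{uv_glo_nr_int1}. The lemma's hypothesis then reads $p < q_0'$ or $q > (q_0')' = q_0$, which is precisely the condition stated in the corollary for $q_0 > 2$. Another invocation of Theorem \ref{thm_Pitt_nr_intn} finishes the proof.

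There is no substantial obstacle; the proof is an exercise in matching indices between the two statements, and the case split on $q_0 \lessgtr 2$ is dictated purely by the definition of $\max\{q_0, q_0'\}$ appearing in \eqref{uv_glo_nr_int1}.
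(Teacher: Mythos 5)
Your proposal is correct and is precisely the argument the paper intends: the corollary is stated immediately after Lemma \ref{1em_imply_new} with the one-line justification that it follows from that lemma (with $N=1$) together with Theorem \ref{thm_Pitt_nr_intn}. Your index-matching for the two cases $q_0\leq 2$ and $q_0>2$, and the verification that $U$ is non-increasing and $V$ is non-decreasing, simply fill in the details the paper leaves implicit.
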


\section{Necessary condition for Pitt's inequality}\label{sec_nec}
In this section, we establish the necessary conditions for arbitrary radial weights for Pitt's inequality to hold for Fourier transforms in ${\X}$. But before we delve into that, we would like to recall that in the Euclidean setting, the classical Fourier transform $\mathcal{F}(f)$ for a radial function $f$ also becomes radial, given by
\begin{align*}
\mathcal{F}(f)(x) = C_N \int_{\mathbb{R}^N} f(x) j_{N/2-1}(|\xi|x) \, d\xi,
\end{align*}
where $j_{N/2-1}$ is the normalized Bessel function. By choosing $|\xi x|$ smaller than the first zero of $j_{N/2-1}$, one can bound it from below by a constant, which in turn provides a lower bound for $|\mathcal{F}(f)|$. Then, by appropriately choosing a compactly supported function (see \cite[page 579]{He_84}), we can prove the necessary condition \eqref{nec_org_RN} in $\mathbb{R}^{N}$.

 As we have mentioned in the introduction, in the case of symmetric spaces representing the spherical function $\phi_{\lambda} $ in terms of Bessel functions may not be useful. Furthermore, expecting $\phi_{\lambda+i\rho_p}$ to be bounded from below by a constant is unrealistic, as it has exponential decay towards infinity (see \eqref{est_phi_lam}).  Instead of a constant, we should expect to be bounded from below by $\phi_{i\rho_p}$. To address this, we utilized the integral representation (\ref{int_exp_phi}) of $\phi_{\lambda}$. Additionally, we have demonstrated that similar to the case for $ p\not=2$ where $\phi_{\lambda+i\rho_p}(a_t)$ is comparable to $\phi_{i\rho_p}(a_t)$,  in the degenerate case $p=2 $ (i.e. $\rho_p=0$), $\phi_{\lambda}(a_t)$ is comparable to $\phi_{0}(a_t)$ for  $\lambda \in \R$  provided $|\lambda t|$ is not too large.

\begin{lemma}\label{lem_f^geq}
    Let  $p\in [1,\infty]$, $\alpha>0$, and $B_{\alpha}=\{ x \in \X : |x| \leq \alpha\}$. Then for any  $K$-biinvariant non-negative function $f$ supported on $B_{\alpha}$, the following inequality holds for all $\lambda \in \R$
    \begin{equation}\label{f^geq}
    \begin{aligned}
        &|\what{f}(\lambda+i\rho_p)| \geq \int_0^{\alpha} f(a_t)\left( \frac{c_0 \sinh 2t}{\Delta(t)} \int_0^t (\cosh 2t- \cosh 2s)^{\frac{m_2}{2}-1} \right. \\
        & \hspace{2.7cm} \cdot \left. \left(\int_0^s (\cosh s -\cosh r)^{\frac{m_1}{2}-1} \cos(\lambda r) \cosh(\rho_p r) \,dr\right) \sinh s\, ds\right) \Delta(t)\, dt, 
    \end{aligned}
    \end{equation}
    for all $\lambda\in \R$. Moreover,  for any fixed positive constant $\theta_0<\pi/2$, if  $|\lambda \alpha|\leq \theta_0$,  then we have
    \begin{align}\label{f^>_phi}
        |\what{f}(\lambda+i\rho_p)|  \asymp_{\theta_0} \int_0^{\alpha} f(a_t) \phi_{i\rho_p}(a_t) \Delta(t) dt.
    \end{align}
\end{lemma}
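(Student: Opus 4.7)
The plan is to reduce the spherical transform to a one-dimensional integral over $A^+$ via the Cartan decomposition, feed in Koornwinder's representation \eqref{int_rep_phi}, and then extract the lower bound by passing to the real part. Using $K$-biinvariance of both $f$ and $\phi_\mu$ together with the identity $\phi_\mu(x^{-1}) = \phi_\mu(x)$ (property (1)), the Cartan decomposition \eqref{car_dec} reduces the defining integral of $\widehat{f}(\lambda+i\rho_p)$ to
\[
\widehat{f}(\lambda+i\rho_p) \;=\; \int_0^\alpha f(a_t)\, \phi_{\lambda+i\rho_p}(a_t)\, \Delta(t)\, dt,
\]
in view of $\operatorname{supp} f \subset B_\alpha$. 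Substituting $\lambda + i\rho_p$ for $\lambda$ in \eqref{int_rep_phi}, the only factor in the integrand that is not manifestly real is
\[
\cos\bigl((\lambda+i\rho_p)r\bigr) \;=\; \cos(\lambda r)\cosh(\rho_p r) \;-\; i\,\sin(\lambda r)\sinh(\rho_p r).
\]
Taking real parts produces an expression whose bracketed kernel is exactly the quantity inside the large parentheses on the right-hand side of \eqref{f^geq}, and the first inequality then drops out of the trivial estimate $|\widehat{f}(\lambda+i\rho_p)| \geq \operatorname{Re}\widehat{f}(\lambda+i\rho_p)$.

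For the two-sided bound \eqref{f^>_phi}, the hypothesis $|\lambda\alpha|\leq \theta_0 < \pi/2$ is what makes the argument go through: every $r$ appearing in the iterated integration satisfies $r\leq s\leq t\leq \alpha$, hence $|\lambda r|\leq \theta_0$ and $\cos(\lambda r)\geq \cos\theta_0 > 0$ pointwise. Inserting this lower bound into the real-part formula just derived, and recognizing (via a second application of \eqref{int_rep_phi} with $\lambda$ replaced by $i\rho_p$, using $\cos(i\rho_p r) = \cosh(\rho_p r)$) that the remaining kernel assembles into $\phi_{i\rho_p}(a_t)\,\Delta(t)/(c_0 \sinh 2t)$, I obtain
\[
|\widehat{f}(\lambda+i\rho_p)| \;\geq\; \cos\theta_0 \int_0^\alpha f(a_t)\, \phi_{i\rho_p}(a_t)\, \Delta(t)\, dt.
\]
The matching upper bound requires no smallness hypothesis on $|\lambda\alpha|$ and is immediate from \eqref{phi_il}: $|\phi_{\lambda+i\rho_p}(a_t)|\leq \phi_{i\rho_p}(a_t)$, inserted into the Cartan-reduced integral above.

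The main conceptual obstacle, and the reason for making the detour through Koornwinder's formula, is that a direct lower bound on $|\phi_{\lambda+i\rho_p}(a_t)|$ is hopeless on account of the oscillation in $\lambda$. The virtue of \eqref{int_rep_phi} is that the complex parameter enters only through a single cosine factor, so passing to the real part converts the oscillatory ingredient into the pointwise non-negative quantity $\cos(\lambda r)\cosh(\rho_p r)$. The constraint $|\lambda\alpha|<\pi/2$ is precisely what keeps this factor strictly positive on the full range of integration, and is the origin of the threshold $\pi/2$ that subsequently appears in the necessary condition of Theorem \ref{thm_nec_int}.
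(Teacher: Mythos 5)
Your proposal is correct and follows essentially the same route as the paper: Cartan reduction, substitution of Koornwinder's formula \eqref{int_rep_phi}, the identity $\cos((\lambda+i\rho_p)r)=\cos(\lambda r)\cosh(\rho_p r)-i\sin(\lambda r)\sinh(\rho_p r)$ to isolate the real part, the pointwise bound $\cos(\lambda r)\geq\cos\theta_0$ on the range $r\leq\alpha$, reassembly into $\phi_{i\rho_p}$ via $\cos(i\rho_p r)=\cosh(\rho_p r)$, and the upper bound from \eqref{phi_il}. No gaps.
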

\begin{proof}
We will demonstrate \eqref{f^geq} specifically for $\lambda \geq 0$, with the analogous reasoning applying to the other case. We recall $\phi_{\lambda}(x)=\phi_{\lambda}(x^{-1})$, for all $x\in G$. Using this, employing the Cartan integration formula \eqref{car_dec}, and  from \eqref{int_rep_phi}, we obtain
\begin{align*}
     \what{f}(\lambda+i\rho_p) &= \int_{G} f(x) \phi_{\lambda+ i\rho_p}(x^{-1}) \,dx\\
    &=\int_0^{\alpha} f(a_t) \phi_{\lambda+i\rho_p}(a_t) \Delta(t) \,dt\\
    &=\int_0^{\alpha} f(a_t)\left( \frac{c_0 \sinh 2t}{\Delta(t)} \int_0^t (\cosh 2t- \cosh 2s)^{\frac{m_2}{2}-1} \right. \\
        & \hspace{2.5cm} \cdot \left. \left(\int_0^s (\cosh s -\cosh r)^{\frac{m_1}{2}-1} \cos(\lambda r+i\rho_p r) \,dr\right) \sinh s\, ds\right) \Delta(t)\, dt.
\end{align*}
Substituting the cosine formula $$\cos (\lambda r + i \rho_p r) = \cos(\lambda r) \cosh(\rho_p r)- i \sin(\lambda r) \sinh(\rho_p r),$$
we can rewrite the expression as follows
\begin{align}\label{f^_deco}
    \what{f}(\lambda+i\rho_p) &= I_{\cos}(\lambda)- i I_{\sin}(\lambda),
\end{align}
where 
\begin{equation}
\begin{aligned}\label{defn_Icos}
     I_{\cos}(\lambda)& = \int_0^{\alpha} f(a_t)\left( \frac{c_0 \sinh 2t}{\Delta(t)} \int_0^t (\cosh 2t- \cosh 2s)^{\frac{m_2}{2}-1} \right. \\
        & \hspace{1cm} \cdot \left. \left(\int_0^s (\cosh s -\cosh r)^{\frac{m_1}{2}-1}  \cos(\lambda r)\cosh(\rho_p r)     \,dr\right) \sinh s\, ds\right) \Delta(t) \,dt,
\end{aligned}
\end{equation}
\begin{align*}
     \hspace{-1.1cm} I_{\sin}(\lambda)& = \int_0^{\alpha} f(a_t)\left( \frac{c_0 \sinh 2t}{\Delta(t)} \int_0^t (\cosh 2t- \cosh 2s)^{\frac{m_2}{2}-1} \right. \\
        & \hspace{1.1
        cm} \cdot \left. \left(\int_0^s (\cosh s -\cosh r)^{\frac{m_1}{2}-1}  \sin(\lambda r)\sinh(\rho_p r)     \,dr\right) \sinh s\, ds\right) \Delta(t) \, dt.
\end{align*}
We observe that both $I_{\cos}$ and $I_{\sin}$ are real-valued functions when $\lambda \in \R$. Therefore, taking the modulus on both sides of \eqref{f^_deco}, we arrive at
\begin{align}\label{f^>Icos}
| \what{f}(\lambda+i\rho_p)| \geq I_{\cos}(\lambda),
\end{align}
which is \eqref{f^geq}. If we assume $\lambda \alpha\leq \theta_0$, then by considering the expression \eqref{defn_Icos} of  $I_{\cos}(\lambda)$, we find that $r\leq \alpha$ implies $\cos (\lambda r)\geq \cos (\lambda \alpha)\geq C_{\theta_0}$. Substituting this into \eqref{f^>Icos}, we obtain
\begin{align*}
    | \what{f}(\lambda+i\rho_p)| &\geq C_{\theta_0} \int_0^{\alpha} f(a_t)\left( \frac{c_0 \sinh 2t}{\Delta(t)} \int_0^t (\cosh 2t- \cosh 2s)^{\frac{m_2}{2}-1} \right. \\
        & \hspace{3cm} \cdot \left. \left(\int_0^s (\cosh s -\cosh r)^{\frac{m_1}{2}-1}   \cosh(\rho_p r)     \,dr\right) \sinh s\, ds\right) \Delta(t) dt
\end{align*}
Now using the formula of $\phi_{i\rho_p}$ from \eqref{int_rep_phi}, we have from the inequality above
\begin{align}\label{what_f>}
    | \what{f}(\lambda+i\rho_p)|\geq   C_{\theta_0} \int_0^{\alpha} f(a_t) \phi_{i\rho_p}(a_t) \Delta(t) dt.
\end{align}
The other inequality follows easily from the property \eqref{phi_il} of $ \phi_{\lambda}$. Indeed, using \eqref{phi_il}, we can write for all $\lambda \in \R$,
\begin{align*}
    |\what{f}(\lambda +i\rho_p)|  \leq \int_{\X} |f(x)| |\phi_{\lambda+i\rho_p}(x^{-1})| \, dx
     \leq \int_{\X} f(x)\, \phi_{i \rho_p}(x) \, dx.
\end{align*}
From  \eqref{what_f>} and the inequality above, we conclude our lemma.
\end{proof}

For $\Im \lambda<0$, it follows from \eqref{est_phi_lam} that $|\phi_{\lambda}(a_t)|\asymp \phi_{i\Im \lambda}(a_t) $. However, for the case $\Im \lambda=0$, that is, $\lambda \in \R$,  it is not known whether this behavior holds. In the following, we investigate this degenerate case.
\begin{proposition}\label{sph_real_low}
 Suppose $0<\theta_0<\pi/2$ is a constant. Then, the following holds for $\lambda \in \R$
    \begin{align*}
        \phi_\lambda(a_t) \asymp_{\theta_0}  \phi_0 (a_t) \quad \text{whenever } 0\leq |\lambda t| \leq \theta_0.
    \end{align*}
\end{proposition}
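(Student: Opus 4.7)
The plan is to read both $\phi_\lambda(a_t)$ and $\phi_0(a_t)$ off of Koornwinder's integral representation (Lemma \ref{int_exp_phi}) and compare the two side by side. Observe that the two expressions
\[
(c_0 \sinh 2t)^{-1}\Delta(t)\,\phi_\lambda(a_t),\qquad (c_0\sinh 2t)^{-1}\Delta(t)\,\phi_0(a_t)
\]
are iterated integrals over the triangle $\{0\le r\le s\le t\}$ with identical kernels
\[
K(t,s,r):=(\cosh 2t-\cosh 2s)^{\frac{m_2}{2}-1}(\cosh s-\cosh r)^{\frac{m_1}{2}-1}\sinh s,
\]
the only difference being the oscillatory factor $\cos(\lambda r)$ in the first integral versus $\cos(0\cdot r)=1$ in the second. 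The key observation is that for $r\le s\le t$ one has $\cosh 2t-\cosh 2s>0$ and $\cosh s-\cosh r>0$, so $K(t,s,r)$ is nonnegative on the domain of integration irrespective of whether $m_1/2-1$ or $m_2/2-1$ is negative.

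Next I exploit the hypothesis $|\lambda t|\le\theta_0<\pi/2$: for every $r$ in the range of integration $0\le r\le s\le t$, we have $|\lambda r|\le|\lambda t|\le\theta_0$, and therefore
\[
\cos(\theta_0)\;\le\;\cos(\lambda r)\;\le\; 1.
\]
Here $\cos(\theta_0)$ is a strictly positive constant depending only on $\theta_0$. Multiplying by the nonnegative kernel $K(t,s,r)$ and integrating over the triangle (this preserves the inequalities because $K\ge 0$), one obtains
\[
\cos(\theta_0)\,(c_0\sinh 2t)^{-1}\Delta(t)\,\phi_0(a_t)\;\le\;(c_0\sinh 2t)^{-1}\Delta(t)\,\phi_\lambda(a_t)\;\le\;(c_0\sinh 2t)^{-1}\Delta(t)\,\phi_0(a_t).
\]
Cancelling the common positive factor $(c_0\sinh 2t)^{-1}\Delta(t)$ on both sides yields
\[
\cos(\theta_0)\,\phi_0(a_t)\;\le\;\phi_\lambda(a_t)\;\le\;\phi_0(a_t),
\]
which is exactly the claimed two-sided comparison $\phi_\lambda(a_t)\asymp_{\theta_0}\phi_0(a_t)$.

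There is essentially no obstacle here beyond confirming positivity of the kernel $K(t,s,r)$; the argument is just a monotonicity exploitation of Koornwinder's formula. The only minor subtlety is the sign of the exponents $m_1/2-1$ and $m_2/2-1$, which can be negative (e.g.\ for real hyperbolic spaces where $m_2=0$ or $m_1=1$), but since $\cosh 2t-\cosh 2s$ and $\cosh s-\cosh r$ are strictly positive on the open triangle, negative powers still produce a positive integrand and the comparison goes through with constants independent of $\lambda$ and $t$.
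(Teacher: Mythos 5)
Your proof is correct and follows essentially the same route as the paper: both rest on Koornwinder's representation \eqref{int_rep_phi}, the nonnegativity of the kernel on the triangle $0\le r\le s\le t$, and the bound $\cos(\lambda r)\ge\cos(\theta_0)$ for $|\lambda r|\le|\lambda t|\le\theta_0$. The only cosmetic difference is that you extract the upper bound $\phi_\lambda(a_t)\le\phi_0(a_t)$ from the same integral formula via $\cos(\lambda r)\le 1$, whereas the paper quotes the standard property \eqref{phi_il}; both are immediate.
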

\begin{proof}
      We recall the spherical function satisfies the following properties  
      \begin{align*}
          \phi_{\lambda}= \phi_{-\lambda} \quad \text{and} \quad \phi_{\lambda}(a_t)= \phi_{\lambda}(a_{-t}),
      \end{align*}
      for all $\lambda, t \in \R$. Thus, to prove the lemma, it is sufficient to consider the case where  $\lambda, t\geq 0$.
    We fix a $\theta_0 \in (0,\pi/2)$. Then,  for all $ s\leq \theta_0$, we have $\cos s \geq C_{\theta_0}$, where $C_{\theta}>0$ is constant depending only on $\theta_0$. Now, for $\lambda t\leq \theta_0$, we can leverage the formula \eqref{int_exp_phi} to write 
   \begin{equation*}
       \begin{aligned}
      {(c_0 \sinh 2t)}^{-1}{\Delta(t)} \phi_{\lambda}(a_t) &\\
      & \hspace{-3cm}= \int_0^t (\cosh 2t- \cosh 2s)^{\frac{m_2}{2}-1} \left(\int_0^s (\cosh s -\cosh r)^{\frac{m_1}{2}-1} \cos(\lambda r) \,dr\right) \sinh s\, ds\\
      & \hspace{-3cm}\geq  \int_0^t (\cosh 2t- \cosh 2s)^{\frac{m_2}{2}-1} \left(\int_0^s (\cosh s -\cosh r)^{\frac{m_1}{2}-1} \cos(\lambda t) \,dr\right) \sinh s\, ds\\
      &  \hspace{-3cm}\geq C_{\theta_0} \int_0^t (\cosh 2t- \cosh 2s)^{\frac{m_2}{2}-1} \left(\int_0^s (\cosh s -\cosh r)^{\frac{m_1}{2}-1}   \,dr\right) \sinh s\, ds,
   \end{aligned}
   \end{equation*}
   where in the second step we used the fact that $r \leq t$, and so $\cos (\lambda r)\geq \cos (\lambda t)$. Thus, from the inequality above, we obtain
   \begin{align}\label{geq_phi_0}
       \phi_{\lambda}(a_t) \geq C_{\theta_0} \phi_0(a_t), \quad \text{ whenever } 0\leq  \lambda t \leq \theta_0.
   \end{align}
   To prove the reverse inequality, we have from \eqref{phi_il} 
   \begin{equation}\label{leq_phi_0}
       |\phi_{\lambda}(a_t)|\leq \phi_0(a_t)
   \end{equation}
   for all $\lambda \in \R.$ Therefore from \eqref{geq_phi_0} and \eqref{leq_phi_0}, our lemma follows.
\end{proof}

We will now proceed with the proof of Theorem \ref{thm_nec_int}. Additionally, we will demonstrate a more general necessary condition from which Theorem \ref{thm_nec_int} will follow as a consequence. As in the Euclidean case, we assume that $u$ and $v^{-p'}$ are locally integrable. 

\begin{theorem}\label{thm_nec_new}
  Let {${q_0 \in [1,\infty]}$} be fixed.  Assume that for any two radial weights $u$ and $v$,  the Pitt's inequality \eqref{pitt_nrad_int} holds for any $1<p,q<\infty$. Then for any $\kappa\in \R$,
\begin{multline}\label{nec-new}
    \sup_{s>0} \left(\int_{0}^{\theta_0/s} u(\lambda)^q |c(\lambda)|^{-2}d\lambda\right)^{\frac{1}{q}} \left(\int_{0}^{s}v(a_t)^{-p'}  \phi_{i\rho_{q_0}}(a_t)^{\kappa p}  \Delta(t) dt\right)^{-\frac{1}{p}} \\  \cdot \left( \int_0^{s} v(a_t)^{-p'}\phi_{i\rho_{q_0}}(a_t)^{\kappa+1} \Delta(t) dt \right)< \infty,
\end{multline}
where $\theta_0$ is any fixed positive number less than $\pi/2$.
\end{theorem}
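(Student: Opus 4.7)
The plan is to deduce \eqref{nec-new} by feeding a carefully chosen nonnegative radial test function into Pitt's inequality \eqref{pitt_nrad_int} and invoking the lower bound on the spherical transform provided by Lemma \ref{lem_f^geq}. For fixed $s>0$ and $\kappa\in\R$, I would take
$$f_s(a_t)=\chi_{[0,s]}(t)\,v(a_t)^{-p'}\,\phi_{i\rho_{q_0}}(a_t)^{\kappa},\qquad t\geq 0,$$
extended radially to $\X$. The function $\phi_{i\rho_{q_0}}$ is strictly positive on $G$ by its integral representation \eqref{int_rep}, so $f_s$ is nonnegative, radial, and compactly supported in $B_s$; the standing local integrability of $v^{-p'}$ ensures $f_s\in L^p(\X,v^p\,dx)$. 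Since $f_s$ is not literally in $C_c^\infty(\X)$, I would first regularize it by convolution with a positive radial mollifier, apply \eqref{pitt_nrad_int} to the smoothed version, and pass to the limit using monotone/dominated convergence on the right and Fatou's lemma on the left.

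Because $f_s$ is radial, $\widetilde{f_s}(\lambda,k)=\widehat{f_s}(\lambda)$ is independent of $k$, and the inner $L^{q_0}(K)$-norm in \eqref{pitt_nrad_int} reduces to $|\widehat{f_s}(\lambda+i\rho_{q_0})|$. Restricting the outer $\lambda$-integral to $|\lambda|\leq \theta_0/s$ (legitimate since $u$ and $|c(\cdot)|^{-2}$ are even) and applying the lower bound \eqref{f^>_phi} of Lemma \ref{lem_f^geq}, available precisely because $|\lambda s|\leq \theta_0<\pi/2$, yields the pointwise estimate
$$|\widehat{f_s}(\lambda+i\rho_{q_0})|\gtrsim_{\theta_0}\int_0^s v(a_t)^{-p'}\,\phi_{i\rho_{q_0}}(a_t)^{\kappa+1}\Delta(t)\,dt,$$
which is independent of $\lambda$ and therefore factors out of the $\lambda$-integral. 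On the right-hand side, the Cartan decomposition together with the algebraic identity $-p'p+p=-p'$ gives
$$\int_{\X} f_s^p v^p\,dx=\int_0^s v(a_t)^{-p'}\,\phi_{i\rho_{q_0}}(a_t)^{\kappa p}\Delta(t)\,dt.$$
Plugging both computations into \eqref{pitt_nrad_int} and rearranging produces exactly \eqref{nec-new}, uniformly in $s>0$. As a sanity check, the specialization $\kappa=p'/p$ gives $\kappa p=\kappa+1=p'$, so one recovers precisely \eqref{nec_org} of Theorem \ref{thm_nec_int}.

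The main obstacle I anticipate is the regularity of the test function. Lemma \ref{lem_f^geq} requires $f$ to be nonnegative, $K$-biinvariant, and supported in $B_s$, and all three properties must survive the mollification; a positive radial mollifier of small support preserves them, and positivity and continuity of $\phi_{i\rho_{q_0}}$ transmit pointwise convergence of the spherical transforms through the integral representation \eqref{int_rep_phi}. A secondary point is that if any of the integrals in \eqref{nec-new} diverges, the asserted bound is vacuous; this case is disposed of by truncating $v^{-p'}$ by a large constant before extracting the supremum and then letting the truncation level tend to infinity.
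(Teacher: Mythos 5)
Your proposal is correct and follows essentially the same route as the paper: the same test function $f(a_t)=\chi_{[0,s]}(t)\,v(a_t)^{-p'}\phi_{i\rho_{q_0}}(a_t)^{\kappa}$, the same computation of the right-hand side via $p(1-p')=-p'$, and the same restriction of the $\lambda$-integral to $[0,\theta_0/s]$ followed by the lower bound of Lemma \ref{lem_f^geq}. Your additional mollification and truncation remarks address a regularity point that the paper passes over silently (its test function is likewise not in $C_c^\infty$), but this is a refinement of the same argument rather than a different one.
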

\begin{proof}
Let $u$ and $v$ be radial weights on $\R$ and $\X$ respectively. Moreover, assume that for the weights $u$ and $v$, Pitt's inequality \eqref{pitt_nrad_int} holds. Then, by employing the integration formula \eqref{car_dec} corresponding to the Cartan decomposition, we can express \eqref{pitt_nrad_int} as
    \be \label{pitt_rad_strip}
C\left(\int_{0}^{\infty}|v(a_t)f(a_t)|^p \Delta(t) dt\right)^{\frac{1}{p}} \geq \left(\int_{\R}\abs {u(\lambda)\hat{f}(\lambda + i \rho_{q_0})}^q\abs{c(\lambda)}^{-2}d\lambda\right)^{\frac{1}{q}} 
\ee
for all $f \in C_c^{\infty}(G//K)$. Here, we have utilized the fact that $\tilde{f}(\lambda,k) = \what f(\lambda)$ for all $\lambda \in \C, k\in K$. Now, for any $s>0$ and $\alpha \in \R$, let us consider the following radial function
\begin{align*}
    f_{\kappa}(a_t)=\begin{cases}
        v(a_t)^{-p'}\phi_{i\rho_{q_0}} (a_t)^{\kappa}   \quad &\text{if } |t|\leq s,\\
        0 \quad &\text{otherwise}.
    \end{cases} 
\end{align*}
Then, we obtain
\begin{equation}
\begin{aligned}\label{lp_fk}
    \left(\int_{0}^{\infty}|v(a_t)f_{\kappa}(a_t)|^p \Delta(t) dt\right)^{\frac{1}{p}}& = \left(\int_{0}^{s}v(a_t)^{p(1-p') } \phi_{i\rho_{q_0}} (a_t)^{\kappa p} \Delta(t) dt\right)^{\frac{1}{p}} \\
    & = \left(\int_{0}^{s}v(a_t)^{-p' } \phi_{i\rho_{q_0}} (a_t)^{\kappa p} \Delta(t) dt\right)^{\frac{1}{p}}.\end{aligned}
\end{equation}
On the other hand, we can write
\begin{align*}
    \left(\int_{\R} u(\lambda)^q |\what{f}_{\kappa}(\lambda+i\rho_{q_0})|^q |c(\lambda)|^{-2}  d\lambda\right)^{\frac{1}{q}} \geq \left(\int_{0}^{\theta_0 /s} u(\lambda)^q |\what{f}_{\kappa}(\lambda+i\rho_{q_0})|^q |c(\lambda)|^{-2}  d\lambda\right)^{\frac{1}{q}},
\end{align*}
for any $\theta_0>0$. We observe that in the right-hand side integral  $0\leq \lambda\leq \theta_0/s$. Also, we recall that $f_{\kappa}$ is a $K$-biinvariant function supported on the set $B_s=\{ x \in \X : |x| \leq s\}$. Thus, selecting $\theta_0$ as any fixed positive number less than $\pi/2$, and applying Lemma \ref{lem_f^geq}, we obtain 
\begin{align*}
    & \left(\int_{\R} u(\lambda)^q |\what{f}_{\kappa}(\lambda+i\rho_{q_0})|^q |c(\lambda)|^{-2}  d\lambda\right)^{\frac{1}{q}} \\ & 
 \hspace{2cm} \geq C_{\theta_0} \left(\int_{0}^{\theta_0 /s} u(\lambda)^q \left| \int_0^{s} f_{\kappa}(a_t) \phi_{ i \rho_{q_0}}(a_t) \Delta(t) dt \right|^q |c(\lambda)|^{-2}  d\lambda\right)^{\frac{1}{q}}\\
 & \hspace{2cm} =C_{\theta_0} \left(\int_{0}^{\theta_0 /s} u(\lambda)^q \left| \int_0^{s} v(a_t)^{-p'} \phi_{ i \rho_{q_0}}(a_t)^{\kappa+1} \Delta(t) dt \right|^q |c(\lambda)|^{-2}  d\lambda\right)^{\frac{1}{q}}\\
 & \hspace{2cm}= C_{\theta_0} \left( \int_0^{s} v(a_t)^{-p'} \phi_{ i \rho_{q_0}}(a_t)^{\kappa+1} \Delta(t) dt \right) \left(\int_{0}^{\theta_0 /s} u(\lambda)^q  |c(\lambda)|^{-2}  d\lambda\right)^{\frac{1}{q}}
\end{align*}
Combining the inequality above, with \eqref{lp_fk} and \eqref{pitt_rad_strip}, we get
\begin{multline*}
  C_{\theta_0}  \left(\int_{0}^{s}v(a_t)^{-p' } \phi_{i\rho_{q_0}} (a_t)^{\kappa p} \Delta(t) dt\right)^{\frac{1}{p}} \\ \geq  \left( \int_0^{s} v(a_t)^{-p'} \phi_{ i \rho_{q_0}}(a_t)^{\kappa+1} \Delta(t) dt \right) \left(\int_{0}^{\theta_0 /s} u(\lambda)^q  |c(\lambda)|^{-2}  d\lambda\right)^{\frac{1}{q}}
\end{multline*}
or,  equivalently, 
\begin{multline*}
     \left(\int_{0}^{\theta_0/s} u(\lambda)^q |c(\lambda)|^{-2}d\lambda\right)^{\frac{1}{q}} \left(\int_{0}^{s}v(a_t)^{-p' } \phi_{i\rho_{q_0}} (a_t)^{\kappa p} \Delta(t) dt\right)^{-\frac{1}{p}} \\  \cdot \left( \int_0^{s} v(a_t)^{-p'} \phi_{ i \rho_{q_0}}(a_t)^{\kappa+1} \Delta(t) dt \right)<C_{\theta_0} 
\end{multline*}
for all $s>0$. This concludes the proof of \eqref{nec-new}.
\end{proof} 
 
 \begin{proof}[\textbf{Proof of Theorem \ref{thm_nec_int}}]
    By  choosing $\kappa= \frac{1}{(p-1)}$  in the Theorem \ref{thm_nec_new} above, \eqref{nec-new} reduced to 
\begin{align*}
    \sup_{s>0} \left(\int_{0}^{\theta_0/s} u(\lambda)^q |c(\lambda)|^{-2}d\lambda\right)^{\frac{1}{q}} \left(\int_{0}^{s}v(a_t)^{-p'}  \phi_{i\rho_{q_0}}(a_t)^{p'}  \Delta(t) dt\right)^{\frac{1}{p'}}< \infty.
\end{align*}
This proves  Theorem \ref{thm_nec_int}.
 \end{proof}

\section{Pitt's inequality for polynomial and exponential weights}\label{sec_example}
This section is dedicated to the applications of our main results on weighted Fourier inequalities.
We provide a comprehensive method to analyze both sufficient and necessary conditions for exponential and polynomial weights to satisfy \eqref{uv_loc_int} using Theorem \ref{thm_Pitt_nr_intn} and Theorem \ref{thm_nec_int}.  To address the exponential volume growth, we decompose these conditions into two parts: local and global. We will observe that in the analysis of the local part, the dimension of the symmetric space plays a crucial role. However, the situation is entirely different for the global part, where the exponential volume growth of symmetric spaces dominates. We will begin by estimating the non-increasing rearrangement for polynomial and exponential weights.
\subsection{Example of weights and their non-increasing rearrangement}
\begin{enumerate}
   \item\label{exm-triv} \textit{(Trivial weight)}  We denote by $w_0$ the weight that is identically $1$. It is easy to verify that the non-increasing rearrangement with respect to an infinite measure is the constant function $1$.
    \item  \textit{(Weights on the function side)} For $\kappa, \delta\geq 0$, let us consider the following weight $$v=v_{\kappa, \delta}:=v_\kappa \omega_{\delta},$$ where 
    \begin{equation*}
    \begin{aligned}
 v_\kappa(a_s)&:=s^\kappa, \qquad \text{and} \qquad 
 \omega_{\delta}(a_s)&:= e^{2\rho \delta s}, 
 \end{aligned}
\end{equation*} for all $s\geq 0$, extended as a $K$-biinvaraint function on $G$ as in \eqref{A+_to_Kbi}. We first find the non-increasing rearrangement of the function $1/v_{\kappa}$ for $ \kappa>0$. The $\kappa=0$ case follows from the previous example \eqref{exm-triv}. Let $\kappa>0$ and $\mu$ be the Haar measure on $G$ in the Polar decomposition \eqref{car_dec}. Then the distribution function of $1/v_\kappa$ is 
\begin{align*}
    d_{1/v_{\kappa}}(\alpha)&=\mu \left(\left\{s \geq 0: v_{\kappa}^{-1}(s) >\alpha \right\}\right)\\
    &=  \mu \left(\left\{s \geq 0: s^{-\kappa} >\alpha \right\}\right)\\&= \mu \left( \left\{s:s <(1/\alpha)^{\frac{1}{\kappa}} \right\}\right).
\end{align*}
For $1 < \alpha < \infty$, using 
\begin{align}\label{est_Delta}
 \Delta(s) \asymp \begin{cases}
                s^{n-1}  \quad &\text{if } 0\leq s\leq 1,\\
                   e^{2\rho s} \quad  & \text{if } 1< s <\infty,
                   \end{cases}
\end{align}
 we get
\begin{align*}
     d_{1/v_{\kappa}}(\alpha)\asymp \int_0^{(1/\alpha)^{\frac{1}{\kappa}}} s^{n-1}\, dt \asymp \alpha^{-\frac{n}{\kappa}}.
\end{align*}
When $0<\alpha\leq 1$, 
\begin{align*}
     d_{1/v_{\kappa}}(\alpha)\asymp \int_{0}^{1} s^{n-1}ds + \int_1^{\alpha^{-\frac{1}{\kappa}}}e^{2\rho s}ds
    \asymp e^{2\rho \alpha^{-\frac{1}{\kappa}}}.
\end{align*}

The non-increasing rearrangement of $(1/v_{\kappa})^{*}$ of $1/v_{\kappa}$ is given by
\begin{align*}
     {(1/v_{\kappa})}^{*}(t) & = \inf\{\alpha>0: d_{1/v_{\kappa}}(\alpha)\leq t\} \\
                   & =  \min\{ \inf\{0<\alpha\leq 1: d_{{1/v_{\kappa}}}(\alpha) \leq t\}, \, \inf\{1<\alpha < \infty: d_{{1/v_{\kappa}}}(\alpha) \leq t\}\}.
\end{align*}
Now we make use of the estimate of $d_{1/v_{\kappa}}(\alpha)$ for $\alpha$ near zero and away from zero to derive the following
\begin{align*}
    (1/v_{\kappa})^*(t) & \asymp \min\{ \inf\{0<\alpha\leq 1: e^{2\rho \alpha^{-\frac{1}{\kappa}}} \lesssim t\}, \, \inf\{1<\alpha < \infty: \alpha^{-\frac{n}{\kappa}} \lesssim t\}\}.
\end{align*}

This gives
\begin{align*}
     (1/v_{\kappa})^*(t) & \asymp  \min\left\{ \inf\left\{0<\alpha\leq 1:  \alpha \geq  \left(\log t \right)^{-\kappa} \right\}, \, \inf\left\{1<\alpha < \infty: \alpha \geq  t^{-\frac{\kappa}{n}} \right\}\right\}.
\end{align*}
Hence, by considering $t$ is near and away from zero, we obtain from the above
\begin{align}
     (1/v_{\kappa})^*(t) \asymp &\begin{cases}
         t^{-\frac{\kappa}{n}} &\text{if  } 0\leq t\leq e^{2\rho},\\
         \left(\log t\right)^{-\kappa} &\text{if }e^{2\rho} <t<\infty.
     \end{cases}
\end{align}
Using a similar technique, it is easy to see that the estimate of non-increasing rearrangement $(1/\omega_{\delta})^{*}$ of $1/\omega_\delta$ is given by 
\begin{align*}
    (1/\omega_{\delta})^{*}(t)\asymp &\begin{cases}
        { e^{-2\rho \delta}} &\text{if  } 0\leq t\leq e^{2\rho},\\
          t^{-\delta}  &\text{if }e^{2\rho}<t<\infty.
     \end{cases}
\end{align*}
Moreover, using the property of non-increasing rearrangement \eqref{properties-decreasing rearrangement-1}, we have
\begin{align}\label{est_v_k,d*}
  \left(\frac{1}{v_{\kappa, \delta}}\right)^{*}(t) \leq \left(\frac{1}{v_{\kappa}}\right)^*\left(\frac{t}{2}\right)\left(\frac{1}{\omega_{\delta}}\right)^*\left(\frac{t}{2}\right)\asymp &\begin{cases}
        t^{-\frac{k}{n}} {e^{-2\rho \delta}} &\text{if  } 0\leq t\leq 2e^{2\rho},\\
          t^{-\delta}\left(\log\left(\frac{t}{2}\right)\right)^{-k}  &\text{if } 2e^{2\rho}<t<\infty.
     \end{cases}
\end{align}

    \item  \textit{(Weights on the Fourier transform side)}
 For $\sigma \geq 0$, we consider $u=u_\sigma$ given by \begin{equation*}
u_\sigma(\lambda)=|\lambda|^{-\sigma}, \qquad \text{for all $\lambda \in \R$.}
\end{equation*} 
Now we find the non-increasing rearrangement $u_\sigma^\star$ of $u_\sigma$ with respect to the measure $d\nu(\lambda):=|c(\lambda)|^{-2} d\lambda$ for $\sigma> 0$. We recall the following estimate of $ |c(\lambda)|^{-2}$ from Lemma \ref{est_c-2}
 \begin{equation}\label{est_c-2_d}
 \begin{aligned}
        |c(\lambda)|^{-2} \asymp \begin{cases}
            |\lambda|^2 \qquad &\text{if }|\lambda|\leq 1\\
            |\lambda|^{n-1} \qquad &\text{if } |\lambda| > 1.
        \end{cases}
    \end{aligned}
    \end{equation}
The distribution function of $u_\sigma$ is given by
 \begin{align*}
    \hspace{1.2cm} d_{u_{\sigma}}(\alpha)=  \nu\{\lambda \in \R:   |\lambda|^{-\sigma} >\alpha\} = \nu\{\lambda \in \R:   |\lambda|^{\sigma} < 1/\alpha\} = \nu\{\lambda \in \R:   |\lambda| < \left( 1/\alpha\right)^{\frac{1}{\sigma}}\}.
 \end{align*}
 For $0< \alpha\leq 1 $, using  \eqref{est_c-2_d} we have
 \begin{align*}
     d_{u_{\sigma}}(\alpha)\asymp \int_{0}^{1} \lambda^{2} d\lambda+  \int_{1}^{\left( 1/\alpha\right)^{\frac{1}{\sigma}}} \lambda^{n-1} d\lambda \asymp {\alpha^{-\frac{n}{\sigma}}}.
 \end{align*}
When $1<\alpha<\infty$, we  write
\begin{align*}
      d_{u_{\sigma}}(\alpha) \asymp \int_{0}^{\left( 1/\alpha\right)^{\frac{1}{\sigma}}} \lambda^{2}  d\lambda \asymp \alpha^{-\frac{3}{\sigma}}.
\end{align*}
Thus, from the definition of $ u_{\sigma}^{\star}$, we have
\begin{align*}
    u_{\sigma}^{\star}(t) & = \inf\{\alpha>0: d_{u_{\sigma}}(\alpha)\leq t\} \\
                   &\asymp \min\{ \inf\{0<\alpha\leq 1: {\alpha^{-\frac{n}{\sigma}}}\lesssim t\}, \, \inf\{1< \alpha< \infty: \alpha^{-\frac{3}{\sigma}} \lesssim t\}\}\\
    & \asymp \min\{ \inf\{0<\alpha\leq 1: {\alpha \gtrsim  t^{-\frac{\sigma}{n}}}\}, \, \inf\{1< \alpha< \infty: \alpha \gtrsim t^{-\frac{\sigma}{3}}\}\}.
\end{align*}
Therefore,  we have obtained the following
\begin{align}\label{est_u_s^*}
     u_{\sigma}^{\star}(t)\asymp
 \begin{cases}
        t^{-\frac{\sigma}{3}} \qquad &\text{if } 0 \leq t\leq 1\\
        t^{-\frac{\sigma}{n}} \qquad & \text{if } 1<t< \infty.\\
        \end{cases}
\end{align}
\end{enumerate}
\subsection{Sufficient condition for polynomial and exponential weights}
 Now that we have obtained estimates for the non-increasing rearrangement of the weights, we will use Theorem \ref{thm_Pitt_nr_intn} to determine the sufficient conditions on the weights $u_{\sigma}$ and $v_{\kappa, \delta}$ for the weighted Fourier inequality \eqref{pitt_nrad_int} to hold.

Let us denote the non-increasing rearrangement of $u_{\sigma}$ with respect to the measure $|c(\lambda)|^{-2}d\lambda$ by $U$, and the non-increasing rearrangement of $1/v_{\kappa, \delta}$ by $1/V$. We will first find the conditions on $\sigma, \kappa, \delta$ so that the following holds for $ 1<p\leq q<\infty$
\begin{align*}
    \sup_{0< s<\infty} \left(\int_0^{s^{-1}}U(t)^qdt\right)^{\frac{1}{q}} \left(\int_0^sV(t)^{-p'}dt\right)^{\frac{1}{p'}}  <\infty.
\end{align*}
Since the estimates of $U(t)$ and $V(t)$ differ significantly when $t$ is near zero and when it is away from zero, it is natural to consider these two cases separately: when $s$ is near zero and when $s$ is away from zero. Let us first examine the case when $s$ is near zero.

 For $0<s \leq 2e^{2\rho} $ and $\sigma q<3$, we can write from  \eqref{est_u_s^*}
\begin{align*}
   \left(\int_0^{s^{-1}}U(t)^qdt\right)^{\frac{1}{q}} & \asymp \left(\int_0^1t^{-\frac{\sigma q}{3}}dt+\int_1^{s^{-1}}t^{-\frac{\sigma q}{n}}dt\right)^{\frac{1}{q}}\\
    & \asymp s^{\frac{\sigma}{n}-\frac{1}{q}}.
\end{align*}
 For the same range of $s$ as above  and $kp'<n$, we get from  \eqref{est_v_k,d*}
\begin{align*}
     \left(\int_0^sV(t)^{-p'}\,dt\right)^{\frac{1}{p'}} \leq C \left(\int_0^s t^{-\frac{kp'}{n}}\,dt\right)^{\frac{1}{p'}} = C s^{-\frac{k}{n}+\frac{1}{p'}}.
\end{align*}
Therefore, we have
\begin{align*}
   \sup_{0<s\leq 2e^{2\rho}} \left(\int_0^sV(t)^{-p'}dt\right)^{\frac{1}{p'}} \left(\int_0^{s^{-1}}U(t)^qdt\right)^{\frac{1}{q}} \leq C \sup_{0<s\leq 2e^{2\rho}} s^{-\frac{k}{n}+\frac{1}{p'}+\frac{\sigma}{n}-\frac{1}{q}}.
\end{align*}
The expression above is finite if $$\frac{\sigma}{n}  +\frac{1}{p'} \geq \frac{\kappa}{n}+ \frac{1}{q}. $$\\
Now let $2e^{2\rho}<s<\infty$. Then from \eqref{est_v_k,d*}
\begin{align*}
    \left(\int_0^s V(t)^{-p'}dt\right)^{\frac{1}{p'}} & \leq  C \left(\int_0^{2e^{2\rho}} t^{-\frac{kp'}{n}}e^{-2\rho \delta p'}dt+\int_{2e^{2\rho}}^s t^{-\delta p'}\left(\log\left(\frac{t}{2}\right)\right)^{-kp'}dt\right)^{\frac{1}{p'}}\\
   & \leq  C \left(\int_{2e^{2\rho}}^s t^{-\delta p'}\left(\log\left(\frac{t}{2}\right)\right)^{-kp'}dt\right)^{\frac{1}{p'}}\\
    & \leq C  \left(\int_{e^{2\rho}}^s t^{-\delta p'}dt\right)^{\frac{1}{p'}}\\
    & \leq C s^{-\delta +\frac{1}{p'}}.
\end{align*}
For the same range of $s$, we have
\begin{align*}
    \left(\int_0^{s^{-1}}U(t)^qdt\right)^{\frac{1}{q}} & \asymp \left(\int_0^{s^{-1}}t^{-\frac{\sigma q}{3}}dt\right)^{\frac{1}{q}}\\
     & \asymp s^{\frac{\sigma}{3}-\frac{1}{q}}.
\end{align*}
 Therefore, we have
\begin{align*}
   \sup_{2e^{2\rho}\leq s<\infty} \left(\int_0^sV(t)^{-p'}dt\right)^{\frac{1}{p'}} \left(\int_0^{s^{-1}}U(t)^qdt\right)^{\frac{1}{q}} \leq C \sup_{2e^{2\rho}\leq  s<\infty} s^{-\delta +\frac{1}{p'}+\frac{\sigma}{3}-\frac{1}{q}}
\end{align*}
which will be finite  if $$ \frac{\sigma}{3} +\frac{1}{p'} \leq \delta +\frac{1}{q}.$$
Similarly, we  can find that condition (\ref{uv_glo_nr_int1}) holds for $u = u_\sigma$ and $v = v_{\kappa, \delta}$, if the following additional conditions are satisfied for $1<q_0\leq 2$,
  \begin{equation*}
 \left(\frac{\sigma}{n} + \frac{1}{q_0'}\right)q>1,\,\,\, \left(\delta +\frac{1}{q_0'}\right)p'>1.    \end{equation*}

Theorem \ref{thm_Pitt_nr_intn}, Corollary \ref{cor_p<q0} and the calculations above lead us to the following result, which provides sufficient conditions on $u=u_\sigma$ and $v=v_{\kappa, \delta}$ for Pitt's inequality (\ref{pitt_nrad_int}) to hold.
\begin{corollary}
  Let $1  < p \leq q < \infty$. Then the inequality (\ref{pitt_nrad_int}) holds true for the weights $u=u_\sigma$ and $v=v_{\kappa, \delta}$, if the following inequalities are satisfied: 
\begin{enumerate}
    \item When $1<q_0\leq 2$ and either $p<q_0$ or $q>q_0'$:
\begin{equation*}
    \delta\geq 0, \,\,\,0\leq \kappa <\frac{n}{p'},  \,\,\,0\leq \sigma <\frac{3}{q}
\end{equation*}
and 
\begin{equation*} 
\kappa -\sigma\leq  n\left(1-\frac{1}{p}-\frac{1}{q} \right),\,\,\, \frac{\sigma}{3} +\frac{1}{p'} \leq \delta +\frac{1}{q}.
\end{equation*} 
\item In general, when $1<q_0\leq 2$:
 \begin{equation} \label{cond-v-kappa-delta-1}
0\leq \kappa <\frac{n}{p'},  \,\,\,0\leq \sigma <\frac{3}{q},\, \sigma> n\left(\frac{1}{q}-\frac{1}{q_0'} \right),\,\,\,\delta\geq 0, \,\,\, \delta> \left(\frac{1}{q_0}- \frac{1}{p}\right),
\end{equation}
and 
\begin{equation}\label{cond-v-kappa-delta-2} 
\kappa -\sigma\leq  n\left(\frac{1}{p'}-\frac{1}{q} \right),\,\,\, \frac{\sigma}{3} +\frac{1}{p'} \leq \delta +\frac{1}{q}. 
\end{equation} 
Moreover, for the case when $2<q_0<\infty$, the same conditions (as above) are sufficient, except $q_0$ to be replaced by $q_0'$.
\end{enumerate}

\end{corollary}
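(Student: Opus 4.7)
The plan is to verify the hypotheses of Corollary \ref{cor_p<q0} for part (1) and of Theorem \ref{thm_Pitt_nr_intn} for part (2), using directly the sharp non-increasing rearrangement estimates \eqref{est_u_s^*} and \eqref{est_v_k,d*} already established for $u_{\sigma}$ and $v_{\kappa,\delta}$. For each of the conditions \eqref{uv_loc_nr_int1} and \eqref{uv_glo_nr_int1}, I would split the supremum over $s>0$ into a local regime $0<s\le 2e^{2\rho}$ and a global regime $s>2e^{2\rho}$, which matches the two-regime piecewise structure of both $U=u_\sigma^\star$ and $1/V=(1/v_{\kappa,\delta})^*$. The dimension $n$ of $\X$ enters only through the local regime, while the exponential volume growth of $\X$ forces the logarithmic/power behavior that controls the global regime.

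For part (1), it suffices to verify \eqref{uv_loc_nr_int1}. On the local piece $0<s\le 2e^{2\rho}$, the conditions $\sigma<3/q$ and $\kappa<n/p'$ make both integrals convergent, and the inner calculation from the preceding subsection yields the product $\asymp s^{\sigma/n-\kappa/n+1/p'-1/q}$; finiteness of the supremum as $s\to 0^{+}$ is equivalent to $\kappa-\sigma\le n(1-1/p-1/q)$. On the global piece $s>2e^{2\rho}$, the range $0<s^{-1}<1$ forces $U(t)\asymp t^{-\sigma/3}$, while on the function side the dominant tail is $\int_{2e^{2\rho}}^{s}t^{-\delta p'}(\log(t/2))^{-\kappa p'}dt\lesssim s^{1-\delta p'}$; the resulting product $\asymp s^{\sigma/3+1/p'-\delta-1/q}$ is bounded as $s\to\infty$ precisely when $\sigma/3+1/p'\le\delta+1/q$. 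Invoking Corollary \ref{cor_p<q0} with the hypothesis $p<q_0$ or $q>q_0'$ then delivers part (1).

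For part (2), I would additionally verify the global-type hypothesis \eqref{uv_glo_nr_int1}, whose integrals carry the extra factors $t^{-q/q_0'}$ and $t^{-p'/q_0'}$. The tail integrals $\int_{s^{-1}}^{\infty}U(t)^q t^{-q/q_0'}dt$ and $\int_{s}^{\infty}V(t)^{-p'}t^{-p'/q_0'}dt$ require the strict convergence conditions $\sigma/n+1/q_0'>1/q$ and $\delta+1/q_0'>1/p'$, which, after rewriting $1/p'-1/q_0'=1/q_0-1/p$, are precisely the additional inequalities in \eqref{cond-v-kappa-delta-1}. Once these are in place, the same local/global split from part (1) recovers the coupling inequalities \eqref{cond-v-kappa-delta-2}. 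The remaining case $2<q_0<\infty$ reduces to the one just treated by replacing $q_0$ with $q_0'$, exactly as in the last step of the proof of Theorem \ref{thm_Pitt_nr_intn}. The only real obstacle is combinatorial bookkeeping: keeping track of which of the four boundary regimes (small/large $s$ against small/large $t$) activates which exponent, and verifying that the stated six inequalities in each part are simultaneously both sharp enough to guarantee convergence and loose enough to be jointly realizable.
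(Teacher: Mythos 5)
Your proposal is correct and follows essentially the same route as the paper: the paper likewise computes $u_\sigma^\star$ and $(1/v_{\kappa,\delta})^*$, splits the supremum in \eqref{uv_loc_nr_int1} at $s=2e^{2\rho}$ to extract $\sigma<3/q$, $\kappa<n/p'$, $\kappa-\sigma\le n(1/p'-1/q)$ and $\sigma/3+1/p'\le\delta+1/q$, notes that \eqref{uv_glo_nr_int1} holds under the additional tail-convergence conditions $(\sigma/n+1/q_0')q>1$ and $(\delta+1/q_0')p'>1$, and then invokes Theorem \ref{thm_Pitt_nr_intn} and Corollary \ref{cor_p<q0}. The only difference is that you spell out slightly more of the bookkeeping for \eqref{uv_glo_nr_int1} than the paper, which dispatches it with ``similarly''.
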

\begin{remark}
 \begin{enumerate}
 \item By taking $\kappa=\sigma =\delta=0$, fixing $p\in (1,2)$, and $q=p'$, while varying $q_0$ between $p$ and $p'$ in the corollary above, we recover the Hausdorff-Young inequality (see Theorem \ref{thm_RS_HYinq}), except at the boundary lines of the strip $S_p$.
 \item From the sufficient conditions (\ref{cond-v-kappa-delta-1}) and (\ref{cond-v-kappa-delta-2}), we do not find any nonzero values of $\sigma$ and $\kappa$ such that (\ref{pitt_rad_int}) holds for the (polynomial) weights $u=u_\sigma$ and $v=v_\kappa$, for $n>3$. However, if we replace $v_\kappa$ with the (polynomial times exponential) weight $v_{\kappa, \delta}$, where $\delta>0$, we can find sufficient $\kappa$ and $\sigma$ such that (\ref{pitt_nrad_int}) holds.
\item For $n=3$ and $q_0=2$, if $1< p<2$ and $p\leq q<\infty$ and $\sigma\geq 0$ satisfy \begin{equation*}
         \sigma <\frac{3}{q},\quad \sigma>3\left(\frac{1}{q}-\frac{1}{2} \right), \quad \frac{1}{p}+ \frac{1}{q}=1+\frac{\sigma}{3},
          \end{equation*} (that is, $\kappa=0, \delta=0$ in (\ref{cond-v-kappa-delta-1}) and (\ref{cond-v-kappa-delta-2}) then we have
     \bes
\left(\int_{\R} |\lambda|^{-\sigma q} \|\tilde{f}(\lambda,\cdot)\|_{L^2(K)}^q |{c(\lambda)}|^{-2}\, d\lambda\right)^{\frac{1}{q}} \leq C\left(\int_{\X}|{f(x)}|^p \, dx\right)^{\frac{1}{p}}. 
\ees
\item Let $n=2$ and $q_0=2 $. If $1< p<2$, $p\leq q<\infty$, and $\sigma, \kappa\geq 0$ satisfy\begin{equation*}
         \frac{2}{q}-1<\sigma <\frac{3}{q}, \quad \kappa <\frac{2}{p'}, \quad \kappa-\sigma\leq 2\left(1- \frac{1}{p}-\frac{1}{q}\right),\quad  \sigma\leq 3\left(\frac{1}{p}+\frac{1}{q}-1 \right) 
          \end{equation*}
          (that is, we put $\delta=0$ in \eqref{cond-v-kappa-delta-1} and \eqref{cond-v-kappa-delta-2}), then we have
     \bes
\left(\int_{\R} |\lambda|^{-\sigma q} \|\tilde{f}(\lambda,\cdot)\|_{L^2(K)}^q \abs{c(\lambda)}^{-2}d\lambda\right)^{\frac{1}{q}} \leq C\left(\int_{\X}|x|^{\kappa p}\abs{f(x)}^p\,dx\right)^{\frac{1}{p}}. 
\ees
 \end{enumerate}   
\end{remark}

\subsection{Necessary condition for polynomial and exponential weights}
In this section,  we will first determine the necessary conditions for the weights  $u=u_{\sigma}$  and $v=v_{\kappa, \delta}$ for the  Pitt's inequality \eqref{pitt_nrad_int} to hold.  Then, we will discuss several consequences of this result by considering particular cases.
\begin{theorem}

 Let $q_0 \in [1,2]$ and $n\geq 3$. Assume that for $u=u_{\sigma}$ and $v=v_{\kappa,\delta}$, the Pitt's inequality \eqref{pitt_nrad_int} holds for any  $1<p,q<\infty$. Then the following conditions are necessary:
    \begin{align}\label{nec_s,k,d}
         \sigma<\frac{3}{q}, \quad  \kappa< \frac{n}{p'}, \quad  \delta \geq \frac{1}{p'}- \frac{1}{q_0'},
    \end{align} 
    and 
    \begin{align}\label{eqn_balance_X}
        \kappa-\sigma \leq n \left(1-\frac{1}{p} -\frac{1}{q} \right). 
    \end{align}
    For the endpoint case $\delta = \frac{1}{p'}- \frac{1}{q_0'}$, the following conditions are also necessary
    \begin{enumerate}
    \item For $q_0 \not =2$ 
\begin{align*}
    \quad  \sigma+ 1-\frac{1}{p} \leq \kappa+\frac{3}{q}.
\end{align*}
     \item For $q_0=2$, 
     \begin{align*}
    \quad  \sigma +2-\frac{1}{p}\leq \kappa + \frac{3}{q}.
\end{align*}
 \end{enumerate}
Moreover, when $q_0 \in (2,\infty]$, the necessary conditions involving $q_0'$  will be replaced by  $q_0$.
 \end{theorem}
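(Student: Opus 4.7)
The strategy is to specialize the necessary condition from Theorem \ref{thm_nec_int} to $u=u_\sigma$ and $v=v_{\kappa,\delta}$, then analyze the resulting supremum over $s>0$ by splitting into the local regime $s\to 0^+$ and the global regime $s\to\infty$. The pointwise estimates \eqref{sharp_c-2} for $|c(\lambda)|^{-2}$, \eqref{est_Delta} for $\Delta(t)$, and \eqref{est_phi_{rho_p}}--\eqref{est_phi_0} for $\phi_{i\rho_{q_0}}$ reduce the problem to explicit one-dimensional integrals of powers and exponentials.

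For the local regime, I split $\int_0^{\theta_0/s}\lambda^{-\sigma q}|c(\lambda)|^{-2}\,d\lambda$ at $\lambda=1$. Local integrability near $0$ requires $\sigma q<3$, giving $\sigma<3/q$, while the tail contributes $\asymp s^{\sigma q-n}$, i.e.\ $\asymp s^{\sigma-n/q}$ after taking $q$-th roots. Similarly, using $\Delta(t)\asymp t^{n-1}$ and $\phi_{i\rho_{q_0}}(a_t)\asymp 1$ for $t$ small, the ball integral behaves like $\int_0^s t^{n-1-\kappa p'}\,dt$, forcing $\kappa p'<n$ and contributing $\asymp s^{n/p'-\kappa}$. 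Requiring boundedness of the product as $s\to 0^+$ yields
\[
\sigma - \tfrac{n}{q} + \tfrac{n}{p'} - \kappa \geq 0,
\]
which is exactly the balance condition \eqref{eqn_balance_X}.

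For the global regime, $\theta_0/s$ is small, so only the piece $|\lambda|\leq 1$ of $|c(\lambda)|^{-2}$ matters and $\int_0^{\theta_0/s}\lambda^{2-\sigma q}\,d\lambda\asymp s^{\sigma q-3}$, giving a factor $\asymp s^{\sigma-3/q}$ on the Fourier side. On the space side, for $t$ large, $\Delta(t)\asymp e^{2\rho t}$ while $\phi_{i\rho_{q_0}}(a_t)^{p'}\asymp e^{-2\rho p' t/q_0'}$ when $q_0\in[1,2)$ (with an extra $(1+t)^{p'}$ factor at $q_0=2$ by \eqref{est_phi_0}), and $v_{\kappa,\delta}(a_t)^{-p'}=t^{-\kappa p'}e^{-2\rho\delta p' t}$. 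The exponent of $e^{2\rho t}$ in the integrand is $2\rho p'(1/p'-\delta-1/q_0')$; if this is strictly positive the integral grows exponentially in $s$, while the Fourier-side factor decays only polynomially, so finiteness of the sup forces $\delta\geq 1/p'-1/q_0'$.

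The remaining and subtlest step is the endpoint $\delta=1/p'-1/q_0'$, where the exponentials cancel and one must match the polynomial order of the resulting integral against the polynomial decay of the Fourier-side factor. For $q_0\neq 2$, $\int_1^s t^{-\kappa p'}\,dt\asymp s^{1-\kappa p'}$ yields a global factor $\asymp s^{1/p'-\kappa}$, and boundedness of the product $s^{\sigma-3/q}\cdot s^{1/p'-\kappa}$ gives $\sigma+1-1/p\leq \kappa+3/q$. For $q_0=2$, the $(1+t)^{p'}$ correction from \eqref{est_phi_0} raises the polynomial order of the integrand by $p'$, producing $\asymp s^{1/p'+1-\kappa}$ and hence the shifted endpoint inequality $\sigma+2-1/p\leq \kappa+3/q$. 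The case $q_0\in(2,\infty]$ requires no new argument: $q_0'$ enters only through the decay rate of $\phi_{i\rho_{q_0}}$, and by \eqref{est_phi_{rho_p}} one simply replaces $q_0'$ by $q_0$ throughout, which is the stated modification. The main technical obstacle is this endpoint matching, in particular the logarithmic-type correction at $q_0=2$, which is what ultimately distinguishes the two endpoint conditions.
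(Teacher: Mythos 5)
Your proposal is correct and follows essentially the same route as the paper: specializing the general necessary condition of Theorem \ref{thm_nec_int} (with $\kappa$-parameter $1/(p-1)$), splitting the supremum into local and global regimes, and using the sharp estimates \eqref{sharp_c-2}, \eqref{est_Delta}, \eqref{est_phi_{rho_p}} and \eqref{est_phi_0}, including the exponential-growth contradiction that forces $\delta\geq 1/p'-1/q_0'$ and the polynomial matching (with the extra $(1+t)^{p'}$ factor at $q_0=2$) at the endpoint. The only cosmetic imprecision is writing $\int_1^s t^{-\kappa p'}\,dt\asymp s^{1-\kappa p'}$, which holds as an upper and lower bound only for $\kappa p'<1$; for the necessary condition one only needs the lower bound $\gtrsim s^{1-\kappa p'}$, which is valid in all cases and is what the paper uses.
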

 \begin{proof}
Let $1<p,q<\infty$ be fixed.  From Theorem \ref{nec_org}, we recall the necessary condition  for $u= u_{\sigma}$ and $v={v_{\kappa,\delta}}$, 
 \begin{align}\label{nec_u_s_V_k,d}
     \sup\limits_{s>0}  \left(\int_{0}^{\theta_0/s} u_{\sigma}(\lambda)^q  |c(\lambda)|^{-2} d\lambda   \right)^{\frac{1}{q}} \left(   \int_0^s  \left(  \frac{v_{\kappa, \delta}(a_t)}{\phi_{i\rho_{q_0}}(a_t)} \right)^{-p'} \Delta(t) dt \right)^{\frac{1}{p'}}<\infty,
\end{align}
 where $\theta_0$ is any positive number  less than $\pi/2$. Let us fix a number $\theta_0 \in (0, \pi/2)$. For given $p$ and $q$, we define:
 \begin{align*}
     I^{\sigma}_{\kappa, \delta}(s)=  \left(\int_{0}^{\theta_0/s} u_{\sigma}(\lambda)^q |c(\lambda)|^{-2} \, d\lambda \right)^{\frac{1}{q}} \left(   \int_0^s  \left(  \frac{v_{\kappa, \delta}(a_t)}{\phi_{i\rho_{q_0}}(a_t)} \right)^{-p'} \Delta(t) dt \right)^{\frac{1}{p'}}.
 \end{align*}
To address the exponential volume growth of noncompact type symmetric spaces, we express
\begin{align}\label{I_dec}
    \sup\limits_{s>0}  I^{\sigma}_{\kappa, \delta}(s)= \max \left\{  I^{\sigma, \text{loc}}_{\kappa, \delta}, \, I^{\sigma,\text{glo}}_{\kappa, \delta}\right\},
\end{align}
 where 
\begin{align*}
    I^{\sigma, \text{loc}}_{\kappa, \delta}: =\sup\limits_{0< s\leq 1}  I^{\sigma}_{\kappa, \delta}(s)\qquad \text{and} \qquad  I^{\sigma, \text{glo}}_{\kappa, \delta}: =\sup\limits_{ 1<s<\infty}  I^{\sigma}_{\kappa, \delta}(s).
\end{align*}
It is clear from \eqref{I_dec} that it is sufficient to find necessary conditions on $\sigma$, $\kappa$, and $\delta$ to ensure that both $I^{\sigma, \text{loc}}_{\kappa, \delta}$ and $I^{\sigma, \text{glo}}_{\kappa, \delta}$ are finite. To achieve this, we will determine the lower bound of $I^{\sigma}_{\kappa, \delta}(s)$ for both small and large values of $s$. We will begin with the following integral 
\begin{align*}
\left(\int_{0}^{\theta_0/s} u_{\sigma}(\lambda)^q   |c(\lambda)|^{-2}\,  d\lambda \right)^{\frac{1}{q}}.
\end{align*}
For $0<s\leq 1$, we have from \eqref{est_c-2_d}
\begin{align}\label{int_u_s}
   \left(\int_{0}^{\theta_0/s} u_{\sigma}(\lambda)^q   |c(\lambda)|^{-2}\,  d\lambda \right)^{\frac{1}{q}}\asymp \left( \int_0^{\theta_0} \lambda^{-\sigma q +2}  \, d\lambda +\int_{\theta_0}^{\theta_0/s} \lambda^{-\sigma q +n-1}  \, d\lambda\right)^{\frac{1}{q}}.
\end{align}
We observe that the first integral on the right-hand side will be finite if and only if $\sigma q <3$. Assuming this condition, it follows that
 \begin{align}\label{est_u_s<1}
   \left(\int_{0}^{\theta_0/s} u_{\sigma}(\lambda)^q   |c(\lambda)|^{-2}\,  d\lambda \right)^{\frac{1}{q}}\asymp \left( 1+ {s}^{\sigma q-n}  \right)^{\frac{1}{q}} \asymp s^{\sigma -\frac{n}{q}}.
\end{align}
Next, we consider large values of $s$, specifically, $s \geq 1$. Following similar steps as above, we can write
\begin{align*}
      \left(\int_{0}^{\theta_0/s} u_{\sigma}(\lambda)^q   |c(\lambda)|^{-2}\,  d\lambda \right)^{\frac{1}{q}}\asymp  \left( \int_{0}^{\theta_0/s} \lambda^{-\sigma q +2}  \, d\lambda \right)^{\frac{1}{q}}.
\end{align*}
Once again, we observe that the above integral will converge near $\lambda=0$, if and only if $\sigma q <3$, as assumed in \eqref{est_u_s<1}. Thus, we have
\begin{align}\label{int_u_s>1}
      \left(\int_{0}^{\theta_0/s} u_{\sigma}(\lambda)^q   |c(\lambda)|^{-2}\,  d\lambda \right)^{\frac{1}{q}}\asymp    s^{\sigma -\frac{3}{q}}.
\end{align}
We will now determine the lower bound of the following integral
\begin{align*}
 \int_0^s  \left(  \frac{v_{\kappa, \delta}(a_t)}{\phi_{i\rho_{q_0}}(a_t)} \right)^{-p'} \Delta(t) dt.
\end{align*}
For $q_0\in [1,2)$, using the sharp estimate \eqref{est_phi_{rho_p}} of $\phi_{i\rho_{q_0}}$ for all $t\geq 0$ we have
\begin{align}\label{est_v_k,d_<2}
     \frac{v_{\kappa, \delta}(a_t)}{\phi_{i\rho_{q_0}}(a_t)} \asymp t^{\kappa}  e^{2\rho \left( \delta+ \frac{1}{q_0'}\right) t},
\end{align}
and if $q_0 \in (2,\infty]$
\begin{align*}
     \frac{v_{\kappa, \delta}(a_t)}{\phi_{i\rho_{q_0}}(a_t)} \asymp t^{\kappa}  e^{2\rho \left( \delta+ \frac{1}{q_0}\right) t}.
\end{align*}
When $q_0=2$, we have from \eqref{est_phi_0}
\begin{align*}
    \phi_0(a_t) \asymp (1+t) e^{-\rho t} ,\quad \text{for }t\geq 0,
\end{align*}
which leads us to the following 
\begin{align}\label{est_v_k,d_2}
     \frac{v_{\kappa, \delta}(a_t)}{\phi_{0}(a_t)} \asymp \frac{t^{\kappa}}{1+t}  e^{2\rho \left( \delta+ \frac{1}{2}\right) t}, \quad \text{for }t\geq 0.
\end{align}
Hence, for $0<s\leq 1$ and for $q_0\in [1,2]$, using  \eqref{est_Delta} the estimate of $\Delta(t)$ near  $t=0$, we can write 
\begin{align*}
 \int_0^s  \left(  \frac{v_{\kappa, \delta}(a_t)}{\phi_{i\rho_{q_0}}(a_t)} \right)^{-p'} \Delta(t) dt   \asymp 
  \int_0^s t^{-\kappa p'+n-1}.
\end{align*}
The integral above will be finite if and only if $\kappa p'<n$. Assuming so, we obtain for all $ 0<s\leq 1$
\begin{align}\label{int_v_k,d<1}
 \left(   \int_0^s  \left(  \frac{v_{\kappa, \delta}(a_t)}{\phi_{i\rho_{q_0}}(a_t)} \right)^{-p'} \Delta(t) dt \right)^{\frac{1}{p'}} \asymp s^{\frac{n}{p'} -\kappa}.
\end{align}
Combining the inequalities \eqref{est_u_s<1} and \eqref{int_v_k,d<1}, we can write assuming $\kappa<n/p'$ and $\sigma  <3/q$
\begin{align*}
   I^{\sigma, \text{loc}}_{\kappa, \delta} \asymp \sup\limits_{0\leq s<1} s^{\sigma -\frac{n}{q}+\frac{n}{p'} -\kappa}.
\end{align*}
To summarize the case $0<s\leq 1$, we can say the following conditions are necessary 
\begin{align}\label{nec_sum_s<1}
\sigma<\frac{3}{q}, \qquad \kappa < \frac{n}{p'}, \qquad \text{and} \qquad \sigma+ \frac{n}{p'} \geq \kappa + \frac{n}{q}.
\end{align}
so that \eqref{nec_u_s_V_k,d} is true.

We now consider the case $s>1$. Given the distinct behavior of the spherical functions $\phi_0(a_t)$  and $\phi_{i\rho_{q_0}}(a_t)$ for $q_0\not =2$ near $t=\infty$, as seen in  \eqref{est_phi_0} and \eqref{est_phi_{rho_p}} respectively, we need to treat the two cases $q_0\not =2$ and $q_0=2$ separately.

We will start by addressing the case $q_0 \in [1,2)$; when {$q_0\in (2,\infty]$}, the analysis is similar. From \eqref{est_v_k,d_<2}, we can express 
\begin{align}\label{est_s>1vk,d<2}
    \int_0^s  \left(  \frac{v_{\kappa, \delta}(a_t)}{\phi_{i\rho_{q_0}}(a_t)} \right)^{-p'} \Delta(t) \,  dt \asymp \int_0^1 t^{-\kappa p'+n-1}+ \int_1^{s} t^{-\kappa p'} e^{2\rho \left( 1- \beta p'\right) t} \, dt
\end{align}
where $\beta:= \delta+1/{q_0'} $. Similar to the previous case $s\leq 1$, we observe that the first integral on the right-hand side will not be finite unless $\kappa p' <n$. Additionally, we claim that $1-\beta p' \leq 0$ is a necessary condition for Pitt's inequality \eqref{pitt_nrad_int} to hold true with $u=u_{\sigma}$ and $v=v_{\kappa, \delta}$. If not, let us assume on the contrary  $\beta< 1/{p'}$. 
By noting that $t\mapsto t^{-\kappa p'}$ is a decreasing function, it follows from \eqref{est_s>1vk,d<2} that 
\begin{align*}
     \int_0^s  \left(  \frac{v_{\kappa, \delta}(a_t)}{\phi_{i\rho_{q_0}}(a_t)} \right)^{-p'} \Delta(t) \,  dt \geq C s^{-\kappa p'} e^{2\rho \left( 1- \beta p'\right) s}.
\end{align*}
Therefore, using the inequality above and \eqref{int_u_s>1}, we get
\begin{align*}
     I^{\sigma, \text{glo}}_{\kappa, \delta} \geq C   \sup\limits_{s>1} s^{\sigma -\frac{3}{q}-\kappa p'} e^{ 2\rho s \left( \frac{1}{p'}-\beta \right)}.
\end{align*}
Since we assumed $1/{p'}>\beta$, the right-hand side of the inequality above is infinite, which contradicts \eqref{nec_u_s_V_k,d}. Hence, we must have $\beta p' \geq 1$, which implies $t\mapsto e^{2\rho \left( 1- \beta p'\right) t} $ is a decreasing function and consequently
\begin{align*}
    \int_1^{s} t^{-\kappa p'} e^{2\rho \left( 1- \beta p'\right) t}\, dt   \geq e^{2\rho \left( 1- \beta p'\right) s} \int_1^{s} t^{-\kappa p'} \, dt \geq  C  e^{2\rho \left( 1- \beta p'\right) s} s^{1-\kappa p'}.    
\end{align*}
Hence, for $q_0\in [1,2)$ and for $s>1$, we have from (\ref{est_s>1vk,d<2})
\begin{align}\label{int_v_k,d>1}
  \left(  \int_0^s  \left(  \frac{v_{\kappa, \delta}(a_t)}{\phi_{i\rho_{q_0}}(a_t)} \right)^{-p'} \Delta(t) \,  dt \right)^{\frac{1}{p'}} \geq C  e^{2\rho \left( \frac{1}{p'}- \beta \right) s} s^{\frac{1}{p'}-\kappa }.
\end{align}
Thus, we have from \eqref{int_u_s>1} and \eqref{int_v_k,d>1}
\begin{align*}
     I^{\sigma, \text{glo}}_{\kappa, \delta} \geq C   \sup\limits_{s>1} s^{\sigma -\frac{3}{q}+ \frac{1}{p'}-\kappa} e^{ - 2\rho s \left( \beta -\frac{1}{p'} \right)}.
\end{align*}
We recall that $\beta \geq 1/{p'}$ is a necessary condition. Now if $\beta= 1/{p'}$, then the right hand of the inequality above reduced to
\begin{align*}
     C  \sup\limits_{s>1} s^{\sigma -\frac{3}{q}+ \frac{1}{p'}-\kappa },
\end{align*}
which is finite if and only if $$ \sigma + \frac{1}{p'}\leq \kappa + \frac{3}{q}. $$
Therefore, combining all the necessary conditions for $s>1$ case, we derive the following necessary conditions for $q_0\in [1,2)$
\begin{align}\label{nec_sum_[1,2)}
    \sigma<\frac{3}{q}, \qquad \kappa < \frac{n}{p'}, \qquad \text{and} \qquad \delta \geq \frac{1}{p'}-\frac{1}{q_0'}.
\end{align}
 Moreover, for the endpoint case $\delta= \frac{1}{p'}-\frac{1}{q_0'} $, we obtain the following necessary conditions
\begin{align}\label{nec_sum_end[1,2)}
    \sigma<\frac{3}{q}, \qquad \kappa < \frac{n}{p'}, \qquad \text{and} \qquad \sigma + \frac{1}{p'}\leq \kappa+ \frac{3}{q}.
\end{align}
The $q_0=2$ case can be handled similarly. Since for $q_0=2$, $\rho_{q_0}=0$, 
then following a similar calculation as in the previous case and using \eqref{est_v_k,d_2},  we can derive for $s>1$,
\begin{align*}
    \int_0^s  \left(  \frac{v_{\kappa, \delta}(a_t)}{\phi_{{0}}(a_t)} \right)^{-p'} \Delta(t) \,  dt \asymp \int_0^1 t^{-\kappa p'+n-1}+ \int_1^{s} t^{(1-\kappa) p'} e^{2\rho \left( 1- \beta p'\right) t} \, dt,
\end{align*}
where $\beta=\delta+1/2$. By a similar reasoning as before, $\kappa p'<n$ and $\beta \leq 1/{p'}$ are necessary conditions. Assuming the same, it follows 
\begin{align*}
    \left(  \int_0^s  \left(  \frac{v_{\kappa, \delta}(a_t)}{\phi_{0}(a_t)} \right)^{-p'} \Delta(t) \,  dt \right)^{\frac{1}{p'}} \geq C  e^{2\rho \left( \frac{1}{p'}- \beta \right) s} s^{1-\kappa+\frac{1}{p'} },
\end{align*}
which implies that
\begin{equation}
    \begin{aligned}\label{s>1_q0=2}
    &\sup\limits_{s>1}  \left(\int_{0}^{\theta_0/s} u_{\sigma}(\lambda)^q \, d\nu(\lambda) \right)^{\frac{1}{q}} \left(   \int_0^s  \left(  \frac{v_{\kappa, \delta}(a_t)}{\phi_{{0}}(a_t)} \right)^{-p'} \Delta(t) dt 
 \right)^{\frac{1}{p'}} \\
 &\hspace{9cm}   \geq  C   \sup\limits_{s>1} s^{\sigma -\frac{3}{q}+ 1-\kappa+ \frac{1}{p'} } e^{ - 2\rho s \left( \beta -\frac{1}{p'} \right)}.
\end{aligned}
\end{equation}
If $\beta =1/{p'}$,  the inequality above implies that unless 
\begin{align*}
    \sigma +1+ \frac{1}{p'}\leq \kappa + \frac{3}{q},
\end{align*}
the left hand side of \eqref{s>1_q0=2} is infinite. Therefore, we have derived the following necessary condition for the case $q_0=2$ to ensure that the left-hand side of \eqref{s>1_q0=2} is finite
\begin{align}\label{nec_sum_2}
    \sigma<\frac{3}{q}, \qquad \kappa < \frac{n}{p'}, \qquad \text{and} \qquad \delta \geq \frac{1}{p'}-\frac{1}{2}.
\end{align}
 Moreover, for the endpoint case $\delta= \frac{1}{p'}-\frac{1}{2} $, we obtain the following necessary conditions
\begin{align}\label{nec_sum_end2}
    \sigma<\frac{3}{q}, \qquad \kappa < \frac{n}{p'}, \qquad \text{and} \qquad \sigma +1+ \frac{1}{p'}\leq \kappa + \frac{3}{q}.
\end{align}
We conclude the proof our theorem from \eqref{nec_sum_s<1} \eqref{nec_sum_[1,2)}, \eqref{nec_sum_end[1,2)}, \eqref{nec_sum_2}, and \eqref{nec_sum_end2}.
\end{proof}
We would now like to give some remarks that will help us to understand the expectations and scope of such weighted Fourier inequalities.
\begin{remark}
\begin{enumerate}
\item When $n= 2$, a similar result will hold as in the theorem above. However, in these cases, we need to consider the case $\sigma = 2/q$ separately to handle \eqref{int_u_s}. Moreover, when $\sigma \neq 2/q$, one can prove the necessary conditions similarly.

\item In the Euclidean setting $\R^N$, the balance condition $\kappa - \sigma = N ( 1 - \frac{1}{p} - \frac{1}{q} )$ is one of the necessary conditions (see \eqref{k-s=N}). However, in the case of noncompact type symmetric spaces, \eqref{eqn_balance_X} suggests for a given $p,q$, a broader range of $\kappa$ and $\sigma$ can be considered.

\item When $\delta=0$, which means we are considering only polynomial weights, we observe from \eqref{nec_s,k,d} that $p$ has to be less than or equal to $q_0$, if $q_0 \in [1,2]$, and $p\leq q_0'$ if $q_0>2$. Particularly, for $q_0=2$, the condition $p\leq 2$ is necessary. 

\item We would like to mention that the authors in \cite[Corollary 1.8]{KRZ_23} proved Pitt's inequality for polynomial weights in the particular case $q=q_0=2$ and $1\leq p\leq 2$ within the context of noncompact type symmetric spaces of arbitrary rank.  
\end{enumerate}   
\end{remark}
\section{Pitt's  and Paley's inequalities}\label{sec_paley}
In \cite[Theorem 1.3]{DLDS_17}, the authors established Pitt's inequalities using restriction inequalities. Moreover, they provided sufficient conditions of a radial weight {\em without using rearrangement methods}. In this section, our main goal is to prove an analogue of Paley's inequality Theorem \ref{thm_paley} using restriction theorems. This can also be considered an analogue of Pitt's inequality in \cite[Theorem 1.3]{DLDS_17} with $1\leq p=q \leq 2$ in $\X$.

\begin{proof}[\textbf{Proof of Theorem \ref{thm_paley}}]   We consider the measure space $(\X, dx)$ and $(Y, dy):= (\R \times K, \lambda^2 u(\lambda)^{2} (1+|\lambda|)^{n-3}d\lambda dk)$. We define an analytic family of linear operators for   $f \in C_c^{\infty}(\X)$ by
    \begin{align*}
        T_z f(\lambda, k) = \frac{\widetilde{f} (\lambda+z, k) (\lambda+z)}{u(\lambda) (1+|\lambda|)} , \quad \text{where } |\Im z| \leq  \rho.
    \end{align*}
    For  $z =\xi +i\rho$,  
    \begin{align*}
        \|T_z f\|_{L^1(Y)} &= \int_{\R} \int_{K}  \frac {| \widetilde{f} (\lambda+\xi+i\rho, k) ||(\lambda+\xi+i\rho)|} {u(\lambda) (1+|\lambda|)}  \lambda^{2} u(\lambda)^{2} (1+|\lambda|)^{n-3}\,  d\lambda\, dk.
    \end{align*}
     
    By applying Fubini's Theorem and  restriction theorem (Theorem \ref{RS_4.2}), we get
    \begin{align*}
         \|T_z f\|_{L^1(Y)} &= \int_{\R}  \left( \int_{K}   \left| \widetilde{f} (\lambda+\xi+i\rho, k) \right|\, dk \right)  \frac{|(\lambda+\xi+i\rho)|}{   (1+|\lambda|) } \lambda^{2} u(\lambda) (1+|\lambda|)^{n-3} \,d\lambda\\
         & \leq C (1+|\xi|) \|f\|_{L^1(\X)}  \int_{\R}   u(\lambda) \lambda^2 (1+|\lambda|)^{n-3}\, d\lambda.
    \end{align*}
    Then using the estimate \eqref{sharp_c-2} of $|c(\lambda)|^{-2}$, we get 
    \begin{align}\label{Tz_1,1}
         \|T_{\xi+i\rho} f\|_{L^1(Y)} & \leq C (1+|\xi|) \|f\|_{L^1(\X)}  \int_{\R}   u(\lambda) |c(\lambda)|^{-2}\, d\lambda.
    \end{align}
    We note that by our hypothesis, the right-hand side of the inequality above is finite.

      For $z =\xi \in \R$, we get by a change of variable 
    \begin{align*}
        \|T_\xi f\|^2_{L^2(Y)} &= \int_{\R} \int_{K}  \frac{| \widetilde{f} (\lambda+\xi, n) |^2 |\lambda+\xi|^2}{u(\lambda)^2 (1+|\lambda|)^2} |\lambda|^2 u(\lambda)^2 (1+|\lambda|)^{n-3}\,  d\lambda \, dk \\
        &\leq  \int_{\R} \int_{K}   |\widetilde{f} (\lambda+\xi,k) |^2 |\lambda+\xi|^2  (1+|\lambda|)^{n-3}\,  d\lambda\, dk  \\
                &= \int_{\R} \int_{K}  | \widetilde{f} (\lambda, k) |^2 |\lambda|^2   (1+|\lambda-\xi|)^{n-3}\,d\lambda\, dk.
    \end{align*}
    Since we assume $n\geq 3$, we can write 
    \begin{align*}
        (1+|\lambda-\xi|)^{n-3} \leq C  (1+|\xi|)^{n-3} (1+|\lambda|)^{n-3} ,
    \end{align*}
    whence we obtain
    \begin{align*}
         \|T_\xi f\|^2_{L^2(Y)} &\leq C  (1+|\xi|)^{n-3} \int_{\R} \int_{K}  |\widetilde{f} (\lambda, k) |^2 |\lambda|^2  (1+|\lambda|)^{n-3}\,  d\lambda \,dk.
    \end{align*}
Using \eqref{sharp_c-2} in the inequality above, and followed by the Plancherel theorem, we derive
\begin{align}\label{Tz_2,2}
    \|T_\xi f\|^2_{L^2(Y)}\leq C (1+|\xi|)^{n-3} \int_{\R} \int_{K}  |\widetilde{f} (\lambda, k) |^2 |c(\lambda)|^{-2}\,  d\lambda\,  dk =  C (1+|\xi|)^{n-3}  \|f\|_{L^2(\X)}^2.
\end{align}
By applying Stein analytic interpolation theorem, it follows from \eqref{Tz_1,1} and \eqref{Tz_2,2} that for $1\leq p\leq 2$
\begin{align}\label{Lp_Tf}
       \left(  \int_{\R} \int_{K} \frac{ | \widetilde{f}(\lambda+i\rho_p, k)|^p |\lambda+i\rho_p|^p}{(1+|\lambda|)^p}  
  u(\lambda)^{2-p} \lambda^2 (1+|\lambda|)^{n-3} \,  d\lambda \, dk   \right)^{\frac{1}{p}} \leq  C_p \|u\|_{L^1\left( |c(\lambda)|^{-2}\right)}^{\frac{2}{p}-1} \|f\|_{L^p(\X)},
    \end{align}
 Now using \eqref{sharp_c-2} the estimate of $|c(\lambda)|^{-2}$ and the following  
    \[   \frac{|\lambda+i\rho_p|}{(1+|\lambda|)}\asymp  1,\]
     for $1\leq p<2$, we can write from \eqref{Lp_Tf},
    \begin{align}
         \left(  \int_{\R} \int_{K}  \left| \widetilde{f}(\lambda+i\rho_p, k)\right|^p u(\lambda)^{2-p} |c(\lambda)|^{-2}\, d\lambda\,  dk \right)^{\frac{1}{p}} \leq  C_p \|u\|_{L^1( |c(\lambda)|^{-2})}^{\frac{2}{p}-1} \|f\|_{L^p(\X)},
    \end{align}
    holds for all $p\in [1,2]$.
\end{proof}
\begin{remark}
\begin{enumerate}
\item We observe that if $u$ is a non-negative function such that  
        \begin{align*}
         \|u\|_{L^1(|c(\lambda)|^{-2})} =  \int_{\R}u(\lambda) |{c(\lambda)}|^{-2}\, d\lambda <\infty,
        \end{align*}
        then using \cite[Theorem 1.1]{KRS_10}, we have  for $q_0 \in [p,p']$
\begin{align*}
 \int_{\R} \left(\int_{K}\left|\widetilde{f}(\lambda+i\rho_{q_0},k)\right|^{q_0} dk \right)^{\frac{1}{q_0}}  u(\lambda) |{c(\lambda)}|^{-2}\,d\lambda  &\leq C_{p,q_0} |\|u\|_{L^1(|c(\lambda)|^{-2})}\|f\|_{L^{p,1}(\X)}.
\end{align*}

\item  For $1\leq p\leq 2$ and $q_0\in [p,p']$, if $u \in L^p(\R, |c(\lambda)|^{-2})$, then \eqref{pitt_nrad_int} is true for $q=1$ and $v\equiv 1$. Indeed, by the Hausdorff-Young inequality \eqref{thm_RS_HYinq}, 
\begin{align*}
   &  \int_{\R}\left( \int_K\left|  \widetilde{f}(\lambda +i\rho_{q_0},k) \right|^{q_0} dk\right)^{\frac{1}{q_0}} u(\lambda) |{c(\lambda)}|^{-2}\, d\lambda \\
   & \leq \|u\|_{L^p(|c(\lambda)|^{-2})} \left(\int_{\R}\left( \int_K\left|  \widetilde{f}(\lambda +i\rho_{q_0},k) \right|^{q_0} dk\right)^{\frac{p'}{q_0}}  |{c(\lambda)}|^{-2}\, d\lambda\right)^{\frac{1}{p'}} \\
   & \leq C_{p,q_0}\|u\|_{L^p(|c(\lambda)|^{-2})} \|f\|_{L^{p}(\X)}.
\end{align*}
    \end{enumerate}
\end{remark}

\section*{Acknowledgments}
PK is supported partially by SERB MATRICS, grant number MTR/2021/000116. TR is supported by the FWO Odysseus 1 grant G.0H94.18N: Analysis and Partial Differential Equations, the Methusalem program of the Ghent University Special Research Fund (BOF) (Grant number 01M01021). MS is supported by an institute Ph.D. fellowship at IIT Guwahati.
\bibliographystyle{alphaurl}
\bibliography{Ref_Pitts}

\end{document}